\documentclass[10pt]{amsart}
\newif\ifpdf
    \ifx\pdfoutput\undefined
    \pdffalse 
    \else
    \pdfoutput=1 
    \pdftrue
    \fi

    \ifpdf
    \usepackage[pdftex]{graphicx}
    \else
    \usepackage{graphicx}
    \fi
\usepackage{amsmath,amssymb,amsthm,epsfig}

\setlength{\textwidth}{420pt} \setlength{\oddsidemargin}{15pt}

\setlength{\evensidemargin}{15pt}
\parskip=8pt
\parindent=0pt

\newcommand\Z{\mathbb Z}
\newcommand\R{\mathbb R}
\newcommand\N{\mathbb N}

\newcommand\G{\mathfrak G}
\newcommand\M{\mathfrak M}
\newcommand\K{\mathfrak K}
\newcommand\1{\mathtt I}

\newtheorem{theorem}{Theorem}[section]
\newtheorem{proposition}[theorem]{Proposition}
\newtheorem{lemma}[theorem]{Lemma}
\newtheorem{corollary}[theorem]{Corollary}
\theoremstyle{definition}
\newtheorem{definition}[theorem]{Definition}

\theoremstyle{example}

\newtheorem{open}{Open Problem}

\newtheorem*{main}{Theorem 1.1}

\input{epsf.tex}

\begin{document}
\ifpdf
    \DeclareGraphicsExtensions{.pdf, .jpg, .tif}
    \else
    \DeclareGraphicsExtensions{.eps, .jpg}
    \fi

\title[Mapping Schemes Realizable by Obstructed Topological Polynomials]{Mapping Schemes Realizable by\\Obstructed Topological Polynomials}
\author {Gregory A. Kelsey}

\address{Department of Mathematics\\ 1409 W. Green St.\\ Urbana, IL 61801}
\email{gkelsey2@math.uiuc.edu}
\urladdr{http://www.math.uiuc.edu/~gkelsey2/}

\thanks{The author acknowledges support from National Science Foundation
grant DMS 08-38434 ``EMSW21-MCTP: Research Experience for Graduate Students".}

\keywords{Combinatorics of complex dynamics, self-similar groups}
\subjclass[2010]{Primary: 37F20; Secondary: 20F65}

\maketitle

\begin{abstract}
In 1985, Levy used a theorem of Berstein to prove that all hyperbolic topological polynomials are equivalent to complex polynomials. We prove a partial converse to the Berstein-Levy Theorem: given post-critical dynamics that are in a sense strongly non-hyperbolic, we prove the existence of topological polynomials which are not equivalent to any complex polynomial that realize these post-critical dynamics. This proof employs the theory of self-similar groups to demonstrate that a topological polynomial admits an obstruction and produces a wealth of examples of obstructed topological polynomials.
\end{abstract}

\tableofcontents

\section{Introduction}
\label{sec:Introduction}

The modern theory of complex rational maps began with the work of Fatou and Julia during World War I \cite{Jul18}, \cite{Fat19}, \cite{Fat20}. Their theory (exposited well in \cite{Bla84}) shows that for $f$ a complex rational function of degree $d \geq 2$, the Riemann sphere splits into two disjoint sets (now known as the Fatou and Julia sets of $f$) based on the dynamics of $f$. In the 1970s, Mandelbrot used computers to produce remarkable images of these sets and of the separating curves between them \cite{Man82}. Mandelbrot particularly studied the Julia sets of quadratic complex polynomials, and he characterized when these sets are connected (this yielded the famous Mandelbrot set).

The early 1980s saw an explosion of research in this area, due in part to Mandelbrot's work. Douady and Hubbard made important advances in the theory of complex polynomials, especially quadratics \cite{DH82}, \cite{DH84}. To this day, quadratic polynomials remain the best understood in this theory (see e.g. \cite{BS02}). However, the work of Bielefeld, Fisher, and Hubbard on preperiodic polynomials along with the work of Poirer on periodic polynomials have greatly improved our understanding outside the quadratic case \cite{BFH92}, \cite{Poi09}. Non-polynomial rational maps have proved more difficult to study; most results deal only with those maps with low degree and small post-critical set (see e.g. \cite{BBLPP00}).

In the 1980s, Douady and Hubbard employed a procedure now know as \emph{mating} to combine pairs of quadratic polynomials to produce quadratic rational maps. This would allow them to apply to rational functions their machinery for polynomials. Interestingly, they found that mating some pairs of quadratic polynomials does not produce rational maps, and so the question arose as to when two quadratic polynomials are `mateable'.

To answer the mateablity question, Thurston considered a family of branched covering maps from the sphere to itself that topologically resemble complex rational maps. These maps became commonly referred to as Thurston maps, and Thurston characterized when these maps are equivalent to complex rational maps by the existence or non-existence of obstructing multicurves (see Theorem \ref{thm:thurston} in this paper) \cite{DH93}. Researchers have also considered \emph{topological polynomials}, which are Thurston maps that behave like complex polynomials.

The mateability of quadratic complex polynomials has since been solved by Rees, and others have contributed to this general area \cite{Ree86} \cite{Tan92} \cite{Shi00}. However, we remain interested in Thurston's theory partially because of its implications outside of complex dynamics. In fact, the concepts in the preceding paragraph have analogues in the theory of three-manifolds. For instance, a Thurston map admitting an obstruction is analogous to a compact, oriented, irreducible three-manifold having a nonperipheral incompressible torus. For more details regarding this connection, see the survey papers of McMullen: \cite{McM91} \cite{McM94}. Thurston proved his characterization and rigidity theorem using Teichm\"{u}ller theory, so naturally links exist there as well.

Much about Thurston obstructions remains mysterious. While producing an obstruction for a specific example may not be difficult, no one has yet discovered an algorithm for determining the existence or non-existence of an obstructing multicurve in the general setting (although Pilgrim has found that if an obstruction exists, it must be of a canonical form) \cite{Pil01}. The Berstein-Levy theorem for hyperbolic topological polynomials (see Theorem \ref{thm:levy}) remains the best result for the \emph{non}-existence of an obstruction \cite {Lev85}. More recently, Kameyama and Pilgrim have established algebraic criteria for Thurston equivalence of rational maps, but a general algorithm remains elusive \cite{Kam01} \cite{Pil03}.

Topological polynomials which admit obstructions have not seen much study. Usually, researchers use Thurston's characterization to topologically construct complex polynomials. However, Ha\"{i}ssinsky and Pilgrim \cite{HP09} and Bonk and Meyer \cite{BM} have used obstructed topological polynomials to define metrics on the sphere that are not quasisymmetric to the standard sphere. Such metrics interest analysts who seek geometric criteria for quasisymmetric equivalence to the standard sphere. The motivation from this problem comes from Cannon's Conjecture.

Our main result serves as a partial converse to the Berstein-Levy Theorem:

\begin{theorem}
\label{thm:main}

Suppose that a polynomial mapping scheme satisfies one of the following conditions:

\begin{enumerate}
\item at least one (non-attractor) period of length at least two and not containing critical values, 
\item at least two (non-attractor) periods not containing critical values,
\item at least two non-attractor periods both of length at least two, or
\item at least four non-attractor periods.
\end{enumerate}

Then this scheme is realized by a topological polynomial that is not equivalent to any complex polynomial.

\end{theorem}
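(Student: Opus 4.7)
The plan is to realize each of the four configurations by an explicit topological polynomial that admits a Thurston obstruction. Since Thurston's characterization theorem reduces non-equivalence to the existence of an obstructing multicurve, and for topological polynomials such obstructions can always be taken to be Levy cycles---cyclic multicurves $\{\gamma_0,\ldots,\gamma_{n-1}\}$ in $S^2 \setminus P_f$ where each $\gamma_{i-1}$ is isotopic to a degree-one preimage component of $\gamma_i$---my strategy is to build, in each case, a Levy cycle directly into the construction.

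I would first establish a general realization lemma: any polynomial mapping scheme is realized by some topological polynomial $f_0$, built by a standard gluing of branched covers of discs with the prescribed critical portrait. Composing with a homeomorphism $h$ of $S^2$ fixing the post-critical set $P_{f_0}$ pointwise produces another topological polynomial $h \circ f_0$ realizing the same scheme, so the task reduces to choosing $h$ so that $h \circ f_0$ admits a Levy cycle. For each of the four cases I would then exhibit a family of simple closed curves separating points of the non-attractor periodic cycle(s): case (1) uses the single cycle of length at least two, whose absence of critical values is exactly what guarantees the degree-one condition along the cycle; case (2) uses curves separating the two critical-value-free cycles from one another and from the attracting post-critical data; cases (3) and (4) require more intricate routings that avoid critical values, and the numerical abundance of non-attractor cycles (two of length at least two, or four arbitrary) is precisely what makes such routings feasible.

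The verification that the chosen curves actually form a Levy cycle, rather than merely a weakly invariant multicurve, will be carried out using the theory of self-similar groups. Passing to the iterated monodromy group of $h\circ f_0$ yields a wreath recursion in which the candidate curves correspond to specific conjugacy classes of peripheral loops in $\pi_1(S^2 \setminus P_{f_0})$, and the Levy condition translates into a concrete algebraic statement about how the associated virtual endomorphism permutes these classes and acts on their stabilizers. This is both the novelty of the approach (in contrast to drawing pictures of invariant curves by hand) and a uniform tool applicable across all four cases.

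The main obstacle, I expect, is selecting the twist $h$ so that this algebraic invariance holds simultaneously with the combinatorial fidelity to the prescribed mapping scheme. The hardest computations will arise in cases (3) and (4), where critical values may lie on periodic cycles: there the curves must be routed around these values without destroying the cyclic permutation that produces the obstruction, and the wreath recursion must be analyzed carefully enough to certify that no collision with a critical-value strand spoils the degree-one condition on any $\gamma_{i-1} \subset f^{-1}(\gamma_i)$.
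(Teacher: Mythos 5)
Your high-level strategy matches the paper's: realize the scheme, choose a multicurve around the non-attractor period(s), and certify that it is a Levy cycle algebraically via the self-similar-group/iterated-monodromy machinery rather than by drawing pictures. But the proposal leaves out the two steps that carry essentially all the weight in the paper's argument, and as written those gaps are real.

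First, the certification step you describe --- ``the Levy condition translates into a concrete algebraic statement about how the virtual endomorphism permutes these classes'' --- is not automatic for an arbitrary realization $f_0$ and an arbitrary twist $h$. The paper's Proposition~\ref{thm:omega} does the actual work: it shows that (under a numerical hypothesis) the mapping scheme has a realization whose kneading automaton has a very specific form --- the states corresponding to $\Omega$ all restrict to each other in a \emph{single common coordinate}, and their remaining nontrivial restriction/action coordinates are \emph{pairwise disjoint} from one another. This is what makes the generators $a_{i,j}$ cycle cleanly under tensoring, so that $[\gamma]\otimes\M_g = \M_g\otimes[\gamma]$ with $\gamma$ a Dehn twist about the candidate curve. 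Your phrase ``selecting the twist $h$ so that this algebraic invariance holds'' frames the difficulty as being in the choice of $h$, but that is the easy part: once the kneading bimodule $\M_0$ has the right structure, the twist is simply $\gamma_{n-1}\cdots\gamma_1$. The genuine content is constructing $\M_0$ --- re-labeling the arrows of a kneading automaton so the period's arrows all carry label $0$, keeping the remaining labels disjoint, and re-assigning the actions of the active states so that the resulting permutation sequence stays dendroid. Nothing in your proposal addresses this.

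Second, the way the paper leverages ``at least two periods of length $\geq 2$'' or ``at least four periods'' (cases (3) and (4)) is a pigeonhole/counting argument, not an intricate routing of curves. The constraint you need is the hypothesis of Proposition~\ref{thm:omega}: the number of arrows entering $\Omega$ from outside is at most $\sum_{z\in C\setminus\{\infty\},\,\alpha(z)\notin\Omega}(\nu(z)-1)$. When critical values can lie inside a period this can fail for a given period; the point of having a second non-attractor period (or the union of two of the four periods) is that the other period supplies the critical points whose local degrees fill the budget, and by Riemann--Hurwitz at least one of the two candidates for $\Omega$ satisfies the inequality. Your proposal replaces this with ``the numerical abundance of non-attractor cycles is precisely what makes such routings feasible,'' which gestures at the right phenomenon but is not an argument. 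Relatedly, your sketch of case (2) speaks of ``curves separating the two critical-value-free cycles from one another''; the paper instead takes $\Omega$ to be the \emph{union} of two fixed points and uses a single separating curve --- necessary, since either fixed point alone gives a peripheral curve. That detail matters because the $a_{1,2}, a_{2,1}$ twists must commute with $\M_g$ together, not individually around separate curves.

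So: same ambient framework, correct guess that Levy cycles and the wreath recursion are the right objects, correct diagnosis that cases (3)--(4) are where critical values cause trouble, but the two lemmas that actually prove the theorem --- the existence of the carefully labeled kneading automaton, and the counting argument ensuring its hypothesis is satisfiable --- are absent.
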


Fortunately, new tools from the theory of self-similar groups have proved very powerful in the study of post-critically finite complex rational maps. We use these tools to prove Theorem \ref{thm:main}.

A group of automorphisms of an infinite rooted $d$-ary tree is said to be \emph{self-similar} if the restriction of the action of any group element on the subtree below any vertex (which is isomorphic to the entire tree) is another element of the group. This element is called the \emph{restriction} of the original group element at that vertex. Equivalently: a group is self-similar if it can be generated by a finite-state automaton.

The prototypical self-similar group is the Grigorchuk group, introduced in 1980 \cite{Gri80}. This group is \emph{contracting}; that is, if we fix a vertex sufficiently far from the root of our tree, we will have that the mapping that takes a group element to its restriction at the fixed vertex is decreasing in terms of the wordlength of the group. Grigorchuk used the contracting property to prove a variety of interesting results about his group, particularly that it exhibits what were at the time new types of growth and amenability \cite{Gri84}. The study of self-similar groups grew out of the power of the techniques Grigorchuk employed.

A decade after Grigorchuk introduced his group, Fabrykowski and Gupta defined their own group with intermediate growth \cite{FG91}. Bartholdi and Grigorchuk studied the Schreier graphs of the action of this group on the levels of the tree and found that these graphs converge to a fractal set \cite{BG00}. This work helped to inspire Nekrashevych to define the limit set of a contracting self-similar group. He then related these self-similar limit sets to the fractal Julia sets of complex rational maps by defining the \emph{iterated monodromy group} ($IMG$) of a such a map. He showed that the limit set of the $IMG$ of a rational map is homeomorphic to the map's Julia set \cite{Nek05}. Earlier, Pilgrim had also considered a monodromy action by an absolute Galois group on the set of Belyi polynomials \cite{Pil00}.

Iterated monodromy groups have proved to be a rich source of interesting groups. Grigorchuk and \.{Z}uk have studied the properties of $IMG(z^2-1)$, also known as the Basilica Group \cite{GZ02a}, \cite {GZ02b}. Their work, along with a result of Bartholdi and Virag, shows that this group is an example of a new kind of amenability \cite{BV05}. Bux and P\'{e}rez have studied the properties of $IMG(z^2+i)$ and shown it to have intermediate growth like Grigorchuk's group \cite{BP06}. Additionally, the Fabrykowsi-Gupta group is, in fact, an iterated monodromy group of a cubic polynomial \cite{Nek05}.

Recently, Bartholdi and Nekrashevych used the theory of self-similar groups to tackle questions of Thurston equivalence and Thurston obstructions \cite{BN06}. Using the iterated monodromy groups of topological polynomials satisfying particular post-critical dynamics, Bartholdi and Nekrashevych defined a new self-similar group of actions of the pure mapping class group on these polynomials and proved that it is contracting. Since the restriction map on this group leaves the Thurston equivalence class invariant, the contracting property allowed them to algebraically determine the Thurston equivalence class of these topological polynomials.

Nekrashevych continued this work to provide a description of topological polynomials and their iterated monodromy groups in terms of twisted kneading automata \cite{Nek09}. These automata encode all the topological data needed to determine the Thurston equivalence class of the topological polynomial. Further, every self-similar group generated by a twisted kneading automaton is isomorphic to the iterated monodromy group of some topological polynomial. This characterization allows us to translate topological and dynamical questions about these polynomials into algebraic questions which we can answer with explicit computations in these groups.

We utilize this characterization of topological polynomials to prove Theorem \ref{thm:main} and produce many examples of obstructed topological polynomials realizing these mapping schemes. We determine whether multicurves are obstructions to a polynomial $f$ by considering the Dehn twists about the curves as elements of the self-similar group constructed from the iterated monodromy groups of polynomials with similar post-critical dynamics to $f$. If restriction map acts cyclically on these Dehn twists, then the multicurve must be an obstruction.

While this result does not provide a complete categorization of mapping schemes in terms of their realizability by obstructed topological polynomials, we do discuss some aspects of mappings schemes requiring further study in order to prove such a characterization.

The author thanks Ilya Kapovich, Kevin Pilgrim, Sarah Koch, and Volodia Nekrashevych for their helpful conversations. 

\section{Thurston equivalence of topological polynomials}
\label{sec:thurston}

In this section, we give the standard definitions and results regarding Thurston equivalence of topological polynomials. An interested reader may find a more thorough discussion in \cite{Pil03}.

\begin{definition}
\label{def:postcritical}
For $f : S^2 \to S^2$ a branched cover of the sphere and $C_f$ the set of its critical (i.e. branching) points, we define the \emph{post-critical set} of $f$ to be the forward orbits of points in $C_f$. That is: $$P_f = \cup_{\omega \in C_f}\cup_{n\geq1}f^{\circ n}(\omega),$$ where $f^{\circ n}$ is the composition of $f$ with itself $n$ times.
\end{definition}

If $f$ is post-critically finite (i.e. if $P_f$ is a finite set), we say that $f$ is a \emph{Thurston map}.

\begin{definition}
\label{def:toppoly}
A Thurston map is a \emph{topological polynomial} if there exists some $\omega \in C_f$ such that $f^{-1}(\omega) = \{\omega\}$ (we will call this point $\infty$).
\end{definition}

We say that the \emph{degree} of a topological polynomial is the number of sheets of the covering.

Two Thurston maps $f$ and $g$ are said to be Thurston equivalent (henceforth, simply \emph{equivalent}) if there exist orientation-preserving homeomorphisms $\phi_0, \phi_1: S^2 \to S^2$ with $\phi_0(P_f) = \phi_1(P_f) = P_g$ that are isotopic relative to $P_f$ such that $\phi_0f = g\phi_1$. That is, if the following diagram commutes:

$$\begin{array}{ccc}
			(S^2,P_f) & \overset{\phi_0}{\longrightarrow}    & (S^2, P_g)     \\
			f \downarrow    &  & \downarrow g    \\
			(S^2, P_f) &\overset{\phi_1}{\longrightarrow} & (S^2, P_g)
		\end{array}
$$

In Figure \ref{fig:examplepolys}, we give diagrams in the style of \cite{BN06} of two different topological polynomials. For these diagrams, we choose a basepoint $t$ near infinity and draw a loop around $\infty$ in the negative direction. By passing to a homotopic map, we may assume that the topological polynomial takes this loop to itself by a degree $d$ mapping (where $d$ is the degree of the polynomial) which fixes our basepoint $t$. We call this loop the \emph{circle at infinity}. The point $t$ has $d$ preimages: $\{t = t_0, t_1, ... , t_{d-1}\}$, all on the circle at infinity. The preimages of our circle at infinity are subpaths of the circle, starting at some $t_i$ and ending at $t_{i+1}$ (adding mod $d$).

In Figure \ref{fig:examplepolys} we include the generators $\{s_1, s_2, s_3\}$ of $\pi_1(S^2 \setminus P, t)$ and their preimages to help demonstrate the mapping. Both topological polynomials fold the horizontal line at the critical point, which is the preimage of $\omega_1$ (the post-critical point surrounded by $s_1$).

\begin{figure}[h]
\includegraphics[width=5in]{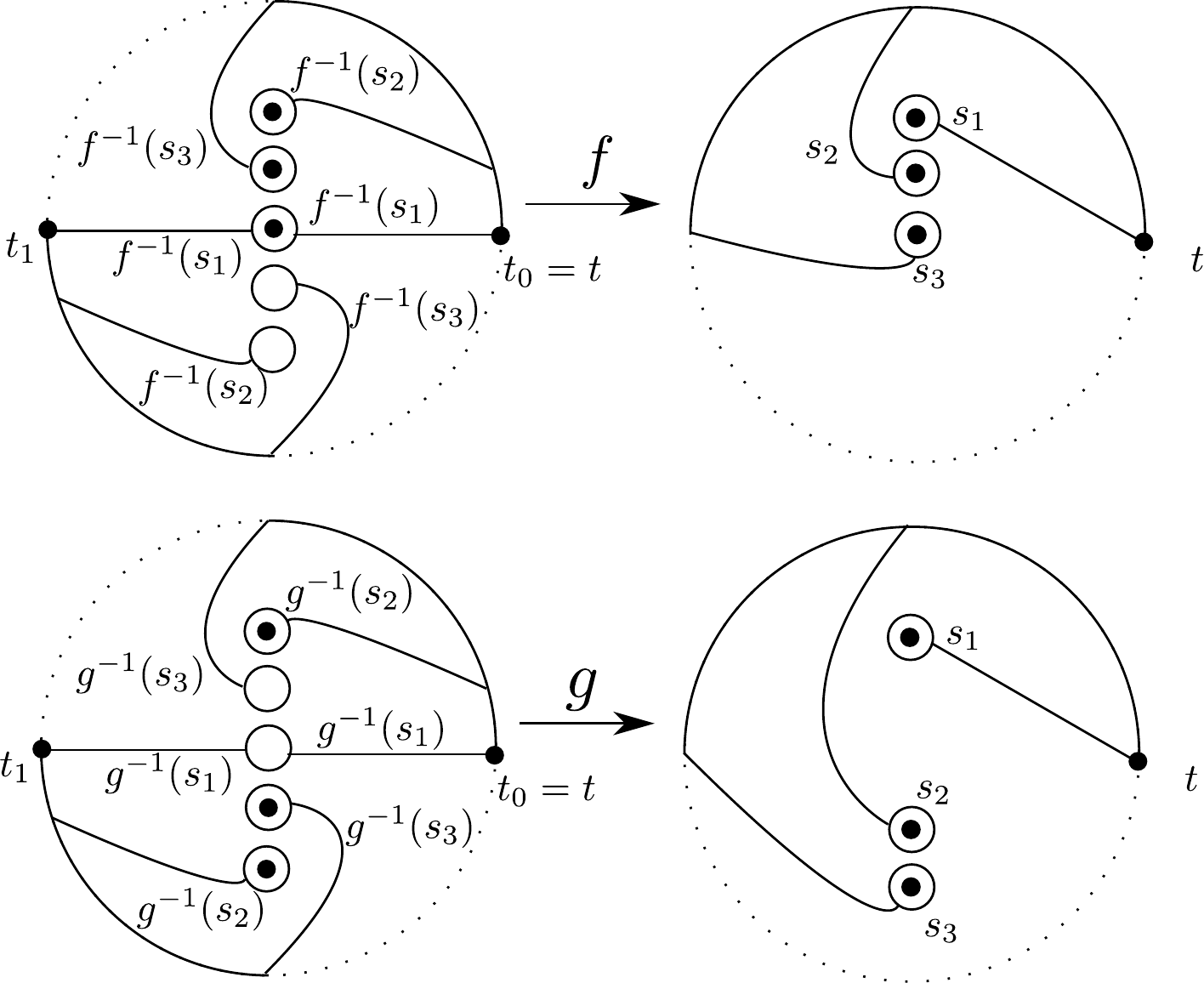}
\caption{Example topological polynomials}
\label{fig:examplepolys}
\end{figure}

To a Thurston map $f$, we associate a topological orbifold $\mathcal{O}_f$ which has underlying space $S^2$ and weight $\nu(x)$ at $x \in S^2$ equal to the least common multiple of the local degree of $f$ over all iterated preimages of $x$. The Euler characteristic $$\chi(\mathcal{O}_f) = 2 -\sum_{\omega \in P_f} (1 - \frac{1}{\nu(\omega)})$$ of this orbifold is always non-positive. If it is negative, we say that the orbifold is \emph{hyperbolic}. 

\begin{definition}
\label{def:nonperipheral}
A simple closed curve $\gamma$ on $S^2 \setminus P$ is \emph{non-peripheral} if both components of $S^2 \setminus \gamma$ contain at least two points in $P$.
\end{definition}

A \emph{multicurve} $\Gamma = \{\gamma_1, \gamma_2, ... , \gamma_n\}$ is a set of non-peripheral simple closed curves on $S^2 \setminus P_f$ that are disjoint and pairwise non-homotopic. We say that a multicurve $\Gamma$ is \emph{$f$-stable} if for all $\gamma \in \Gamma$, we have that every non-peripheral component of $f^{-1}(\gamma)$ is homotopic to some curve in $\Gamma$. For $\Gamma$ stable under $f$, there is an induced map $f_\Gamma: \R^\Gamma \to \R^\Gamma$ given by:
\[ f_\Gamma(\gamma_i) = \sum_{\delta \in f^{-1}(\gamma_i)}\frac{\delta}{\deg f|_\delta} \] By the Perron-Frobenius Theorem, there is a leading positive real eigenvalue $\lambda(f_\Gamma)$ that realizes the spectral radius of $f_\Gamma$.

We can now state Thurston's characterization and rigidity theorem:

\begin{theorem}\cite{DH93}
\label{thm:thurston}
A Thurston map $f$ with hyperbolic orbifold is equivalent to a rational function if and only if for any stable multicurve $\Gamma$, $\lambda(f_\Gamma) < 1$. In that case, the rational function is unique up to conjugation by a M\"{o}bius transformation.
\end{theorem}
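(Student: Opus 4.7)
The plan is to recast the question of equivalence to a rational map as a fixed-point problem on Teichm\"uller space. Let $\mathcal{T} = \mathcal{T}(S^2, P_f)$ denote the Teichm\"uller space of complex structures on the sphere marking $P_f$, modulo isotopy rel $P_f$. For each complex structure $\tau$ on the target copy of $(S^2,P_f)$, the branched cover $f$ pulls $\tau$ back to a unique complex structure on $(S^2, f^{-1}(P_f))$, and forgetting the extra marked points in $f^{-1}(P_f) \setminus P_f$ gives a complex structure on $(S^2, P_f)$. This defines the Thurston pullback map $\sigma_f : \mathcal{T} \to \mathcal{T}$. The first step is the tautology that $f$ is equivalent to a rational map precisely when $\sigma_f$ has a fixed point: at such a fixed point $f$ becomes holomorphic in the induced structure, and uniformization identifies the source with the standard $\widehat{\C}$.

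Second, I would analyze the metric behavior of $\sigma_f$ in the Teichm\"uller metric. The map factors as a holomorphic embedding $\mathcal{T}(S^2,P_f) \hookrightarrow \mathcal{T}(S^2, f^{-1}(P_f))$ given by $f$-pullback of complex structures, followed by the forgetful projection to $\mathcal{T}(S^2,P_f)$. Using Royden's theorem (Teichm\"uller = Kobayashi metric) and the fact that the forgetful projection is a holomorphic submersion between finite-dimensional Teichm\"uller spaces, one shows $\sigma_f$ is weakly Teichm\"uller-contracting. Under the hyperbolic orbifold hypothesis, the contraction is strict on compact sets. This immediately yields \emph{rigidity}: any two fixed points would lie on a Teichm\"uller geodesic along which the distance would strictly decrease, a contradiction.

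For the characterization itself, the easy direction is that if some stable multicurve $\Gamma$ has $\lambda(f_\Gamma) \geq 1$, then pinching the curves of $\Gamma$ in a weighted way gives a direction along which $\sigma_f$ expands hyperbolic lengths, exhibiting a ray on which the orbit diverges and so precluding a fixed point. The hard direction, which I expect to be the main obstacle, is the converse: given that $\sigma_f$ has no fixed point, produce an obstructing multicurve. One would fix any $\tau_0 \in \mathcal{T}$ and study the forward orbit $\tau_n = \sigma_f^n(\tau_0)$; weak contraction makes the distances $d(\tau_n, \tau_{n+1})$ nonincreasing, so either the orbit is precompact (forcing, by strict contraction, a fixed point) or it escapes every compact set. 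In the escaping case, Mumford's compactness criterion guarantees that some simple closed curves have hyperbolic lengths tending to zero; applying the thick-thin decomposition one extracts a limiting multicurve $\Gamma$ of short curves, shows it is $f$-stable by examining where the pinched annuli lift under $f$, and finally identifies the length shrinkage rate with the Perron--Frobenius eigenvalue of $f_\Gamma$, yielding $\lambda(f_\Gamma) \geq 1$. The delicate points here are proving stability of the limiting curve system and controlling the full spectral contribution rather than just the leading term, both of which rely on careful degeneration estimates along the Teichm\"uller orbit.
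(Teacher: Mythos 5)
This statement is Thurston's characterization and rigidity theorem; the paper does not prove it but cites it as a black box from Douady and Hubbard's 1993 Acta Mathematica article (the \texttt{[DH93]} reference), so there is no in-paper proof to compare against. Nothing in the paper's argument depends on the internals of the proof---only on the statement itself, reformulated for polynomials via the notion of obstruction and Levy cycles.

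Judged on its own terms, your sketch is a faithful outline of the Douady--Hubbard argument: reformulation as a fixed-point problem for the pullback map $\sigma_f$ on $\mathcal{T}(S^2,P_f)$; factorization through $\mathcal{T}(S^2,f^{-1}(P_f))$ (isometric lift, then distance-non-increasing forgetful map) to get non-expansion; strict contraction under the hyperbolic-orbifold hypothesis, hence rigidity; Mumford compactness and the thick--thin decomposition to extract an obstructing multicurve from an escaping orbit; and the identification of the degeneration rate with $\lambda(f_\Gamma)$. Two points are glossed over and would be the real work in a complete proof. First, ``precompact orbit implies fixed point'' is not automatic from pointwise strict contraction; one needs $\|d\sigma_f\|$ bounded uniformly away from $1$ on compact sets, and the dichotomy between convergence and escape requires a genuine argument, not just the monotonicity of $d(\tau_n,\tau_{n+1})$. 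Second, extracting a \emph{stable} multicurve and comparing short-geodesic (or extremal) lengths across an application of $\sigma_f$ requires the Gr\"otzsch-type modulus inequalities that produce the transition matrix $f_\Gamma$, and one must discard lifts of short curves that are peripheral or inessential; your closing remark about ``controlling the full spectral contribution'' names the right difficulty but is where the delicate estimates live.
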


\begin{definition}
\label{def:obstruction}
A stable multicurve $\Gamma$ such that $\lambda(f_\Gamma) \geq 1$ is called an \emph{obstruction}. 
\end{definition}

Unfortunately, there is no known algorithm for determining whether a Thurston map admits an obstruction.

For topological polynomials, we can restate Thurston's theorem as follows:

\begin{theorem}\cite{BFH92}
A topological polynomial is equivalent to a complex polynomial if and only if it admits no obstructions. In that case, the complex polynomial is unique up to conjugation by an affine transformation.
\end{theorem}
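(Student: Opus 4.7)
The plan is to deduce this statement by specializing Thurston's general characterization (Theorem \ref{thm:thurston}) to the polynomial setting. Three adjustments are required: passing from ``equivalent to a rational function'' to ``equivalent to a complex polynomial'', tightening the uniqueness from M\"{o}bius conjugation to affine conjugation, and accounting for the non-hyperbolic orbifold case excluded from Theorem \ref{thm:thurston}.

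For the forward direction, a complex polynomial $P$ is in particular a complex rational function, so when $\mathcal{O}_P$ is hyperbolic, Theorem \ref{thm:thurston} shows $P$ admits no obstruction; since Thurston equivalence transfers any stable multicurve $\Gamma$ for one map to a multicurve for the other with identical induced matrix, an equivalent topological polynomial $f$ also admits no obstruction.

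For the reverse direction, assume $f$ admits no obstruction and $\mathcal{O}_f$ is hyperbolic. Theorem \ref{thm:thurston} produces a rational $g$ equivalent to $f$, and the key step is to upgrade $g$ to a polynomial. The point $\infty$ is a fixed critical point of $f$ with $f^{-1}(\infty) = \{\infty\}$ of local degree $d = \deg f$; this totally ramified fixed-point structure is transferred to $g$ by the equivalence, producing some $p \in S^2$ with $g^{-1}(p) = \{p\}$ and local degree $d$. Post-conjugating $g$ by a M\"{o}bius transformation sending $p$ to $\infty$ preserves the equivalence class and forces $g^{-1}(\infty) = \{\infty\}$, which is precisely the condition that $g$ is a polynomial. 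For uniqueness, if $P_1$ and $P_2$ are complex polynomials both equivalent to $f$ then they are equivalent to each other, so Theorem \ref{thm:thurston} provides a M\"{o}bius $\phi$ with $P_1 = \phi P_2 \phi^{-1}$; since each $P_i$ has $\infty$ as its unique totally ramified fixed point, $\phi$ must fix $\infty$ and is therefore affine.

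The main obstacle is handling non-hyperbolic $\mathcal{O}_f$. For any topological polynomial, iterates of $f$ have arbitrarily large local degree at $\infty$, giving $\nu(\infty) = \infty$, and the orbifold has a puncture there; consequently the only Euclidean signatures available are $(\infty,\infty)$ and $(2,2,\infty)$, corresponding to power maps $z^d$ and Chebyshev polynomials respectively. In either case $|P_f| \leq 3$, so no non-peripheral multicurve exists and the no-obstruction hypothesis is vacuous; a direct classification argument (or appeal to \cite{BFH92}) then shows such a topological polynomial is equivalent to the standard model, completing the theorem.
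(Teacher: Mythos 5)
The paper does not prove this theorem; it is quoted from \cite{BFH92} just as Theorem~\ref{thm:thurston} is quoted from \cite{DH93}, so there is no in-paper argument to compare against. What you have written is your own derivation of the polynomial refinement from the general rational characterization, and the skeleton is the right one: transfer the totally ramified fixed point $\infty$ through the Thurston equivalence (which works because $\infty \in P_f$, so $\phi_0$ and $\phi_1$ agree there), post-conjugate by a M\"obius map to put the fixed totally ramified point of $g$ at $\infty$, note that a rational map with $g^{-1}(\infty)=\{\infty\}$ is a polynomial, and separately dispose of the parabolic-orbifold cases that Theorem~\ref{thm:thurston} leaves out.

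Two steps need repair. First, the uniqueness argument asserts that each $P_i$ has $\infty$ as its \emph{unique} totally ramified fixed point, hence the conjugating M\"obius map must fix $\infty$. That is false in general: any polynomial affine-conjugate to $z^d$ has a second totally ramified fixed point at the finite critical point, and indeed $z \mapsto 1/z$ conjugates $z^d$ to itself while moving $\infty$. The conclusion survives, but you must argue it: if $\phi$ does not fix $\infty$ then $q = \phi^{-1}(\infty)$ is a finite totally ramified fixed point of $P_2$, which forces $P_2(z) = q + c(z-q)^d$, so both $P_i$ are affine-conjugate to $z^d$ and one post-composes with the extra symmetry to make the conjugating map affine. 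Second, in the Euclidean case you correctly pin down the signatures to $(\infty,\infty)$ and $(2,2,\infty)$, so $\#P_f \leq 3$ and the no-obstruction hypothesis is vacuous, but you then defer to ``a direct classification argument (or appeal to \cite{BFH92}).'' Appealing to \cite{BFH92} is circular since that is precisely the source of the theorem you are proving; the classification that a topological polynomial with Euclidean orbifold is Thurston-equivalent to a power map or a Chebyshev polynomial, together with affine rigidity in each case, is genuinely needed here because Theorem~\ref{thm:thurston} as stated gives neither existence nor rigidity for non-hyperbolic orbifolds. These are fixable, but they are the parts of the theorem that the hyperbolic Thurston characterization alone does not deliver.
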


The obstructions to topological polynomials are better understood.

\begin{definition}
\label{def:levy}
For $f$ a topological polynomial and $\Gamma$ an obstruction that it admits, then a \emph{Levy cycle} is a set $\Gamma' = \{ \gamma_0, \gamma_1, ... , \gamma_{n-1}\} \subseteq \Gamma$ such that each $f^{-1}(\gamma_i)$ has exactly one non-peripheral component $\tilde{\gamma}_{i-1}$ homotopic to a curve in $\Gamma'$, $\tilde{\gamma}_{i-1}$ is homotopic to $\gamma_{i-1}$, and the map $f: \tilde{\gamma}_{i-1} \to \gamma_i$ has degree 1 (subtracting mod $n$).
\end{definition}

\begin{theorem}\cite{Lev85}

For $f$ a topological polynomial and $\Gamma$ an obstruction that it admits, then $\Gamma$ contains a Levy cycle.

\end{theorem}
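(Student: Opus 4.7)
The plan is to combine Perron--Frobenius theory applied to the transition matrix $f_\Gamma$ with the topological rigidity imposed by the totally invariant critical fixed point $\infty$ of a topological polynomial.

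First, I would reduce to the irreducible case. Since $f_\Gamma$ has non-negative entries, decompose the associated directed graph into strongly connected components; the spectral radius is attained on some terminal SCC. Replacing $\Gamma$ by the sub-multicurve supporting that SCC, I obtain an irreducible $f$-invariant sub-multicurve $\Gamma' \subseteq \Gamma$ with $\lambda(f_{\Gamma'}) \geq 1$, a strictly positive Perron eigenvector $v$, and a directed graph $G$ on $\Gamma'$ whose edges $\gamma_j \to \gamma_i$ are labeled by preimage components $\delta \subset f^{-1}(\gamma_j)$ homotopic to $\gamma_i$, each carrying weight $1/\deg(f|_\delta)$.

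Second, I would exploit the polynomial hypothesis. Because $f^{-1}(\infty) = \{\infty\}$, every $\gamma \in \Gamma'$ bounds a canonical disk $U_\gamma \not\ni \infty$, and every preimage component of $\gamma$ likewise bounds a disk avoiding $\infty$. Applying Riemann--Hurwitz to $f : f^{-1}(U_\gamma) \to U_\gamma$ and using that $\infty$ is its own unique preimage yields the degree identity $\sum_{\delta \subset f^{-1}(\gamma)} \deg(f|_\delta) = \deg f$, and the disks $\{U_\gamma\}_{\gamma \in \Gamma'}$ together with their preimages nest canonically. In particular the column sums of $f_{\Gamma'}$ are bounded by $\sum_\delta 1/\deg(f|_\delta)$ over non-peripheral preimage components, so $\lambda \geq 1$ forces at least one column $j$ to satisfy $\sum_\delta 1/\deg(f|_\delta) \geq 1$; the curve $\gamma_j$ therefore has a generous supply of low-degree non-peripheral preimages.

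Third, I would extract the Levy cycle by tracing the nesting order on the $U_\gamma$ backward under $f$. Pick an innermost curve $\gamma_0$ in the partial order on $\Gamma'$ induced by inclusion of the $U_\gamma$. Its non-peripheral preimages homotopic to curves of $\Gamma'$ are constrained, by the Riemann--Hurwitz count and the innermost condition, to contain a degree-$1$ component $\tilde\gamma_{n-1}$ homotopic to some $\gamma_{n-1} \in \Gamma'$; the same count forces this $\tilde\gamma_{n-1}$ to be the \emph{only} non-peripheral component of $f^{-1}(\gamma_0)$ homotopic to a curve of $\Gamma'$. Repeating the argument with $\gamma_{n-1}$ in place of $\gamma_0$, and using finiteness of $\Gamma'$, produces a cycle $\gamma_0, \gamma_1, \ldots, \gamma_{n-1}, \gamma_0$ satisfying all conditions of Definition~\ref{def:levy}.

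The main obstacle will be the third step. Perron--Frobenius alone cannot extract a cycle of degree-$1$ edges from $\lambda \geq 1$, since non-negative matrices can have arbitrarily large spectral radius while every simple cycle in the underlying graph carries small weight, so the polynomial hypothesis must enter essentially. The delicate bookkeeping is in tracking which preimage components are peripheral, which are homotopic to curves of $\Gamma'$, and how degrees distribute among them; converting the Riemann--Hurwitz inequality on the nested disks $U_\gamma$ into the sharp uniqueness-plus-degree-$1$ statement along the extracted cycle is where the topological polynomial hypothesis, rather than any purely matrix-theoretic property of $f_\Gamma$, does the real work.
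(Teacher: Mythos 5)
The paper does not actually prove this theorem; it is quoted from Levy's thesis, so there is no in-paper argument to compare against, and what follows is an assessment of your proposal on its own terms.

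Your first two steps are sound and isolate the right inputs: passing to an irreducible sub-multicurve, the fact that $f^{-1}(\infty)=\{\infty\}$ forces every preimage component of a curve of $\Gamma'$ to bound a disk avoiding $\infty$, and the Riemann--Hurwitz identity $\sum_\delta \deg(f|_\delta)=\deg f$ together with the Perron--Frobenius bound relating $\lambda$ to column sums. The genuine gap is in step three, and you flag it yourself. The assertion that an innermost curve $\gamma_0$ must have, among its non-peripheral preimages, a unique degree-$1$ component homotopic to a curve of $\Gamma'$ ``by the Riemann--Hurwitz count and the innermost condition'' is not argued, and it does not follow from anything established in steps one and two. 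Being innermost in the nesting order of the disks $U_\gamma$ is a local property of a single curve (it is not even the same as having the fewest marked points, since disjoint non-nested disks are all innermost), and it does not force a degree-$1$ preimage: $\gamma_0$ could perfectly well have a single non-peripheral preimage of degree $2$, with $\lambda\geq 1$ arising from contributions elsewhere in $\Gamma'$ that nothing in your step three controls. What is needed is a global argument coupling the transition matrix to marked-point counts. A key fact your outline never invokes is that distinct preimage components of a fixed $\gamma_j$ bound pairwise disjoint disks and therefore cannot be homotopic to the same curve of $\Gamma$ (the disks would be disjoint yet each contain the same nonempty set $P_f\cap D_i$); combining this with the Riemann--Hurwitz bound $|P_f\cap V_\delta|\leq \deg(f|_\delta)\cdot|P_f\cap D_j|$ ties each matrix entry to the quantities $m_i=|P_f\cap D_i|$, and only by playing these constraints off against $\lambda\geq 1$ over the whole irreducible block can one extract a cycle of degree-$1$ maps, let alone the uniqueness clause in the definition of a Levy cycle. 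As written, step three is a plan, not a proof, and the theorem's content is still missing.
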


Note that a Levy cycle need not be stable (as in the definition in \cite{BN06}). However, in this paper we will only consider stable Levy cycles, since they are easier to identify with our method.

The topological polynomial $g$ in Figure \ref{fig:examplepolys} admits a Levy cycle consisting of a single curve $\Gamma$, as shown in Figure \ref{fig:obstructions}.

\begin{figure}[h]
\includegraphics[width=5in]{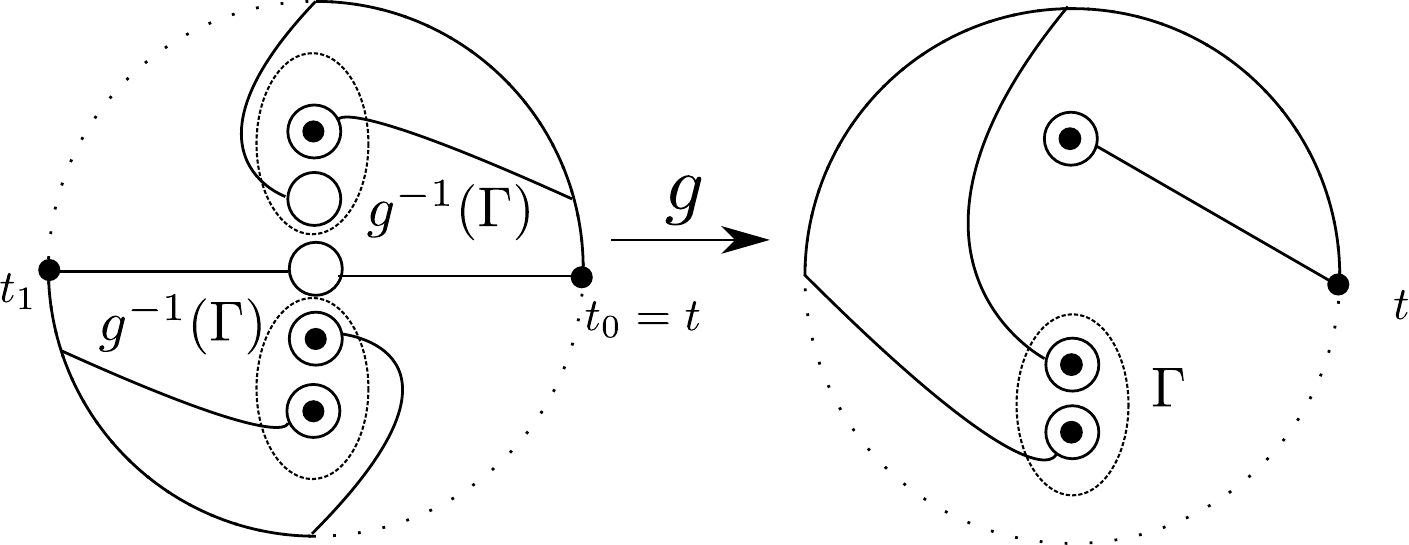}
\caption{Example Levy cycle}
\label{fig:obstructions}
\end{figure}

So a topological polynomial is equivalent to a complex polynomial unless it admits a Levy cycle. But even for topological polynomials, we do not have an algorithm to determine whether an obstruction exists for a particular polynomial. However, some progress has been made.

\begin{definition}
\label{def:hyperbolicpoly}
We say that a topological polynomial $f$ is \emph{hyperbolic} if for all $\omega \in C_f$, there exists some $k \geq 1$ such that $f^{\circ k}(\omega) \in C_f$.
\end{definition}

\begin{definition}
\label{def:periodicpoly}
We say that a topological polynomial $f$ is \emph{periodic} if $C_f \subset P_f$, and \emph{preperiodic} otherwise.
\end{definition}

Notice that a periodic polynomial is always hyperbolic.

In his thesis, Levy used a result of Berstein to prove the following:

\begin{theorem}\cite{Lev85}
\label{thm:levy}
If $f$ is a hyperbolic topological polynomial, then $f$ does not admit a Levy cycle, and so $f$ is equivalent to a complex polynomial.
\end{theorem}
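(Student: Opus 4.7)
Our approach is to argue by contradiction: assuming $f$ is hyperbolic and admits a Levy cycle, we derive a contradiction by forcing a critical point into a region that, by degree considerations, cannot contain any. Nonexistence of Levy cycles is the main content; the conclusion that $f$ is equivalent to a complex polynomial then follows by chaining this with the preceding Levy theorem (every obstruction of a topological polynomial contains a Levy cycle) together with the Bielefeld-Fisher-Hubbard characterization.

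Fix a Levy cycle $\Gamma' = \{\gamma_0, \ldots, \gamma_{n-1}\}$ with non-peripheral preimages $\tilde{\gamma}_{i-1} \subset f^{-1}(\gamma_i)$. Let $D_i$ be the component of $S^2 \setminus \gamma_i$ not containing $\infty$, and set $P_i := P_f \cap D_i$. Since $\infty$ is totally invariant, $\infty \notin f^{-1}(D_i)$, so we take $\tilde{D}_{i-1}$ to be the component of $f^{-1}(D_i)$ bounded by $\tilde{\gamma}_{i-1}$ and not containing $\infty$. The restriction $f \colon \tilde{D}_{i-1} \to D_i$ is a branched cover whose degree equals the degree on the boundary, namely $1$. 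A degree-one branched cover is a homeomorphism, so $\tilde{D}_{i-1}$ contains no critical points of $f$.

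Because $\tilde{\gamma}_{i-1}$ is isotopic to $\gamma_{i-1}$ in $S^2 \setminus P_f$, we have $\tilde{D}_{i-1} \cap P_f = P_{i-1}$, and $f$ restricts to an injection $P_{i-1} \hookrightarrow P_i$. Cycling around $\Gamma'$ forces all $|P_i|$ to agree and makes each $f \colon P_{i-1} \to P_i$ a bijection. Hence $f^n$ permutes $P_0$, so every $x \in P_0$ has periodic $f$-orbit $O$, which visits $P_0, P_1, \ldots, P_{n-1}$ cyclically. We now invoke hyperbolicity: writing $x = f^a(\omega_0)$ for some $\omega_0 \in C_f$, iterated application of the hyperbolicity condition produces a sequence of critical points $\omega_0, \omega_1, \omega_2, \ldots$ along the forward orbit of $\omega_0$, with $\omega_j = f^{c_j}(\omega_0)$ and $c_j \geq j$. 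Once $c_j \geq a$, we have $\omega_j \in O$, so $O$ contains some $\omega' \in C_f$. By the cyclic visiting pattern of $O$, $\omega' \in P_k$ for some $k$, whence $\omega' \in \tilde{D}_k$, contradicting the fact that $\tilde{D}_k$ contains no critical points.

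The subtle point is the correct use of hyperbolicity. One might hope that every periodic point in $P_f$ must itself be critical, but this fails whenever the eventual critical cycle has length greater than one: the non-critical forward iterates of a critical point on such a cycle are themselves periodic post-critical points. The correct statement is only that every periodic orbit in $P_f$ meets $C_f$, and converting this into a contradiction requires locating the resulting critical point inside one of the Levy disks. The cyclic bijection $P_0 \to P_1 \to \cdots \to P_{n-1} \to P_0$ is what supplies this location information and closes the argument.
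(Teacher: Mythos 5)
The paper states this theorem with a citation to Levy's thesis and does not include a proof of its own, so there is no internal argument to compare against; I assess your proposal on its own terms. Your argument is correct and is essentially the Berstein--Levy argument. The one step you should make explicit is the assertion that $f\colon\tilde{D}_{i-1}\to D_i$ has degree $1$ because the boundary map on $\tilde{\gamma}_{i-1}$ does: this presupposes that $\tilde{D}_{i-1}$ is a disk whose entire boundary is the single circle $\tilde{\gamma}_{i-1}$, since in general the interior degree of a proper map equals the \emph{sum} of boundary degrees over all boundary components. The needed fact is genuinely a consequence of the polynomial hypothesis beyond mere invariance of $\infty$: because $\infty$ is a fixed critical point of local degree $d=\deg f$, the component of $f^{-1}(S^2\setminus\overline{D_i})$ containing $\infty$ already accounts for all $d$ sheets; hence $f^{-1}(S^2\setminus\overline{D_i})$ is connected, and every component of $f^{-1}(D_i)$ is an open disk with a single boundary circle. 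With that supplied, the rest is sound: the injections $P_{i-1}\hookrightarrow P_i$ forced to be bijections by cycling, the permutation of $P_0$ by $f^n$, and the chain $c_j\geq j$ that lands a critical point in some $P_k\subset\tilde{D}_k$ all go through as you state. Your closing remark about the correct use of hyperbolicity --- that it yields only that each periodic cycle in $P_f$ meets $C_f$, and that the cyclic bijection $P_0\to\cdots\to P_{n-1}\to P_0$ is what localizes that critical point inside a Levy disk --- identifies exactly the right subtlety.
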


The topological polynomial $f$ in Figure \ref{fig:examplepolys} is periodic, and thus by Berstein-Levy must be equivalent to some complex polynomial.

\section{Mapping schemes}
\label{sec:mapping schemes}

Here we give the standard definitions for mapping schemes as found in \cite{BBLPP00}, but we also borrow some notation from \cite{Koc07}.

\begin{definition}
\label{def:mapping scheme}
A \emph{polynomial mapping scheme of degree $d$} is a tuple $S(C, P, \alpha, \nu)$ where:
\begin{itemize}
\item $Z= C \cup P$ is a finite set,
\item $\alpha : Z \to P$ is surjective,
\item $\nu : Z \to \N$ has $\nu^{-1}(\{ n \geq 2\}) = C$,
\end{itemize}
such that the following conditions are satisfied:
\begin{itemize}
\item The Riemann-Hurwitz Formula: \[d = \frac{\sum_{z \in Z} (\nu(z) - 1)}{2} + 1,\]
\item Local degrees: $\forall z \in Z,$ \[\sum_{\alpha(x) = z} \nu(x) \leq d,\]
\item Infinity:  $\exists z \in C$, which we will call $\infty$, such that: \[\alpha(z) = z, \nu(z) = d.\]
\end{itemize}
\end{definition}

We will treat a mapping scheme as a finite directed graph with vertex set $Z = C \cup P$ and for each $z \in Z$, there are $\nu(z)$ directed edges from $z$ to $\alpha(z)$. A directed cycle in this graph that contains an element of $C$ is called an \emph{attractor}.

\begin{definition}
\label{def:hyperbolicmapping scheme}
A mapping scheme is \emph{hyperbolic} if for all $z \in C$, there exists some $k \geq 1$ such that $\alpha^{\circ k}(z) \in C$ (i.e. if every directed cycle is an attractor).
\end{definition}

\begin{definition}
\label{def:periodicmapping scheme}
A mapping scheme is \emph{periodic} if $C \subset P$ and \emph{preperiodic} otherwise.
\end{definition}

Figure \ref{fig:examplemapping schemes} shows two example mapping schemes of degree 2. The left one is hyperbolic and the right one is not.

\begin{figure}[h]
\includegraphics[width=5in]{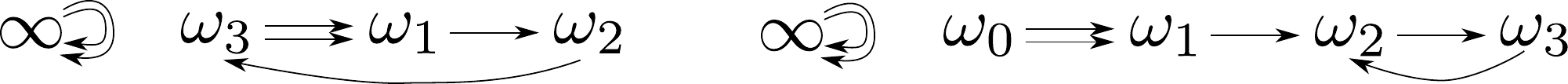}
\caption{Example mapping schemes}
\label{fig:examplemapping schemes}
\end{figure}

\begin{definition}
\label{def:realize}
We say that a topological polynomial $f$ \emph{realizes} a polynomial mapping scheme $S(C, P, \alpha, \nu)$ if there exists a bijection $\beta : C \cup P \to C_f \cup P_f$ such that for all $z \in C \cup P$, we have $f(\beta(z)) = \beta(\alpha(z))$ and the local degree of $f$ at $\beta(z)$ equals $\nu(z)$.
\end{definition}

Notice that the topological polynomials $f$ and $g$ from Figure \ref{fig:examplepolys} realize the mapping schemes in Figure \ref{fig:examplemapping schemes} where $s_i$ loops around $\omega_i$ and $\omega_0 = g^{-1}(\omega_1)$ ($f$ realizes the hyperbolic mapping scheme and $g$ the non-hyperbolic one).

One can easily show that equivalent topological polynomials have isomorphic mapping schemes.

A result of Thom gives the following:

\begin{theorem}\cite{BBLPP00}
For every polynomial mapping scheme, there is a topological polynomial which realizes it.
\end{theorem}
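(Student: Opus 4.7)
My plan is to realize the combinatorial datum as an honest branched cover of $S^2$ via the classical Hurwitz--Riemann monodromy correspondence. I begin by choosing distinct marked points $\{\beta(z)\}_{z \in Z}$ on $S^2$, placing $\beta(\infty)$ at the north pole; the branch locus in the target will be contained in $\beta(P)$. Recall that a degree-$d$ branched cover $f : S^2 \to S^2$ with branch values in $\beta(P)$ is equivalent, via Riemann's existence theorem, to a transitive homomorphism $\rho : \pi_1(S^2 \setminus \beta(P), *) \to S_d$. Using the standard presentation of $\pi_1$ generated by small loops $\gamma_p$ around each $\beta(p)$ subject to the single relation $\prod_p \gamma_p = 1$, the construction reduces to exhibiting permutations $\sigma_p := \rho(\gamma_p) \in S_d$ of prescribed cycle types whose product is the identity and which together act transitively.

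For each $p \in P$, the cycle type of $\sigma_p$ is dictated by the scheme: it has one cycle of length $\nu(z)$ for each $z \in \alpha^{-1}(p)$, padded by additional fixed points up to a total of $d$ symbols. The local-degree inequality $\sum_{\alpha(x)=p} \nu(x) \leq d$ is precisely the hypothesis that makes this padding possible, while the total-ramification condition $\nu(\infty) = d$ forces $\sigma_\infty$ to be a single $d$-cycle. By construction the cycles of each $\sigma_p$ are in canonical bijection with $\alpha^{-1}(p)$, and I will use this bijection to extend $\beta$ to preimages once the cover is in hand.

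The crux of the argument is selecting representatives within these prescribed conjugacy classes so that $\prod_{p \in P} \sigma_p = e$. The Riemann--Hurwitz equality $\sum_{z \in Z}(\nu(z)-1) = 2d-2$ supplies exactly the numerical compatibility required, and the presence of the $d$-cycle $\sigma_\infty$ both gives transitivity automatically and reduces the factorization problem to expressing a $d$-cycle as a product of permutations of the remaining prescribed cycle types, which is solvable by standard Hurwitz-realization arguments in the genus-zero setting. Applying Riemann's existence theorem then yields a branched cover $f : S^2 \to S^2$; because $\sigma_\infty$ is a $d$-cycle, $f^{-1}(\beta(\infty)) = \{\beta(\infty)\}$ has local degree $d$, so $f$ is a topological polynomial. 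Extending $\beta$ to send each $z \in \alpha^{-1}(p)$ to the preimage of $\beta(p)$ corresponding to the cycle of $\sigma_p$ containing its symbol yields the desired bijection, and $f \circ \beta = \beta \circ \alpha$ holds with matching local degrees by construction. The only genuine obstacle is the Hurwitz factorization step, but in our setting it is much more tractable than the general Hurwitz existence problem thanks to the full cycle at infinity.
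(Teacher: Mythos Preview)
The paper does not prove this statement; it is quoted from \cite{BBLPP00} with the remark that it is ``a result of Thom,'' and no argument is given. So there is no in-paper proof to compare against. Your approach via the monodromy correspondence and Riemann's existence theorem is the standard one and is correct in outline.

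Two comments. First, your handling of $\beta$ is inconsistent: you fix $\beta$ on all of $Z$ before building $f$, but the cover produced by Riemann existence has an abstract genus-zero surface as its domain, and only after choosing an identification with the target sphere do you have a self-map. The clean version is to place $\beta(P)$ in the target, build $\tilde f:\widetilde{S^2}\to S^2$ from the monodromy, and then choose an orientation-preserving homeomorphism $\phi:S^2\to\widetilde{S^2}$ sending each $\beta(z)$ to the $\tilde f$-preimage of $\beta(\alpha(z))$ corresponding to the designated cycle of $\sigma_{\alpha(z)}$; set $f=\tilde f\circ\phi$. This is only a bookkeeping slip.

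Second, and more substantively, the step you label ``solvable by standard Hurwitz-realization arguments'' \emph{is} the theorem. The passage from a factorization in $S_d$ to a branched self-cover is formal; what requires proof is that representatives $\sigma_p$ of the prescribed cycle types can be chosen with product a $d$-cycle. You are right that the full cycle at $\infty$ makes this far easier than the general Hurwitz problem, but as written you have reduced the statement to itself. One concrete way to fill the gap, in the language of this paper, is to lay the nontrivial cycles of the $\sigma_p$ on $\{0,\dots,d-1\}$ so that the resulting hypergraph is a tree; the Riemann--Hurwitz count $\sum_z(\nu(z)-1)=d-1$ is exactly the combinatorial identity that makes this possible (compare the hypergraph argument in the proof of Proposition~\ref{thm:omega}), and a dendroid arrangement yields a product that is a single $d$-cycle. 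Equivalently, one can build a kneading automaton directly from the scheme and invoke Proposition~\ref{thm:Nek2}.
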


So we can interpret the Berstein-Levy Theorem as a result about mapping schemes:

\begin{theorem}\cite{Lev85}
\label{thm:blschemes}
A hyperbolic polynomial mapping scheme is realizable only by topological polynomials that are equivalent to complex polynomials.
\end{theorem}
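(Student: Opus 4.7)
The plan is simply to translate the hyperbolicity condition from mapping schemes back to topological polynomials and then invoke Theorem \ref{thm:levy}. Suppose $f$ is a topological polynomial realizing a hyperbolic polynomial mapping scheme $S(C,P,\alpha,\nu)$ via a bijection $\beta \colon C \cup P \to C_f \cup P_f$ as in Definition \ref{def:realize}.

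First, I would observe that $\beta$ carries $C$ bijectively onto $C_f$. The defining property of a realization is that the local degree of $f$ at $\beta(z)$ equals $\nu(z)$; since by definition $\nu(z) \geq 2$ exactly when $z \in C$, we get $\beta(z) \in C_f$ exactly when $z \in C$, and bijectivity of $\beta$ then forces $\beta(C) = C_f$.

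Next, I would check that $f$ is itself a hyperbolic topological polynomial in the sense of Definition \ref{def:hyperbolicpoly}. Pick any $\omega \in C_f$ and write $\omega = \beta(z)$ for some $z \in C$. By hyperbolicity of $S$, there is some $k \geq 1$ with $\alpha^{\circ k}(z) \in C$. A straightforward induction on $k$ using the intertwining relation $f \circ \beta = \beta \circ \alpha$ (valid because $\alpha$ sends $Z$ into $P \subseteq Z$) gives
\[ f^{\circ k}(\omega) = f^{\circ k}(\beta(z)) = \beta(\alpha^{\circ k}(z)) \in \beta(C) = C_f, \]
so $f$ is hyperbolic.

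The conclusion is then immediate from Theorem \ref{thm:levy}: every hyperbolic topological polynomial is equivalent to a complex polynomial, and since the chosen $f$ was an arbitrary realization of $S$, every topological polynomial realizing $S$ must be equivalent to a complex polynomial. I do not anticipate any genuine obstacle here; this theorem is essentially a one-step rephrasing of Berstein-Levy, whose only content is verifying that the combinatorial notion of hyperbolicity for a mapping scheme matches the dynamical notion for any realizing polynomial, which comes out directly from the compatibility axioms packaged into Definition \ref{def:realize}.
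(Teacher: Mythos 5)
Your proof is correct and matches the paper's (implicit) approach: the paper simply states this as a rephrasing of Theorem~\ref{thm:levy}, and your verification that the realization bijection $\beta$ identifies $C$ with $C_f$ and intertwines the dynamics, so that a hyperbolic scheme forces any realizing polynomial to be hyperbolic in the sense of Definition~\ref{def:hyperbolicpoly}, is exactly the translation the authors leave to the reader.
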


In other words, a polynomial mapping scheme with every period being an attractor (i.e. containing an element of $C$) cannot be realized by an obstructed topological polynomial.

It is relatively easy to show that a polynomial mapping scheme with only a single finite period, and that period having length equal to one, cannot be realized by an obstructed topological polynomial.

So naturally we ask which other mapping schemes can be realized by obstructed topological polynomials.

In an unpublished result, Koch found that every unicritical (i.e. $\#(C \setminus \{\infty\}) = 1$) preperiodic polynomial mapping scheme with period length $n \geq 2$ \emph{is} realized by an obstructed topological polynomial \cite{Koch}. Extending these methods, one could establish topological arguments for cases (1) and (2) of our main result, which we restate below. However, we know of no topological constructions for cases (3) or (4).

\begin{main}

Suppose that a polynomial mapping scheme satisfies one of the following conditions:

\begin{enumerate}
\item at least one (non-attractor) period of length at least two and not containing critical values, 
\item at least two (non-attractor) periods not containing critical values,
\item at least two non-attractor periods both of length at least two, or
\item at least four non-attractor periods.
\end{enumerate}

Then this scheme is realized by a topological polynomial that is not equivalent to any complex polynomial.

\end{main}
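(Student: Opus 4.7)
The plan is to begin with any topological polynomial realizing the given mapping scheme (existence guaranteed by the theorem of Thom stated above) and then twist it by a carefully chosen product of Dehn twists about a multicurve $\Gamma$ engineered to become a Levy cycle. The verification that $\Gamma$ is a Levy cycle for the twisted map will be carried out algebraically, following the method advertised in the introduction: realize the Dehn twists $T_{\gamma_i}$ as elements of the self-similar group of pure mapping class group actions built from iterated monodromy groups of topological polynomials sharing the post-critical dynamics, and show that the restriction map permutes them cyclically. By Bartholdi--Nekrashevych and Nekrashevych's analysis via twisted kneading automata, this cyclic behavior forces $\Gamma$ to be an obstruction.

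For each of the four cases I would construct a multicurve tailored to the combinatorics of the scheme. In case (1), a single non-attractor period $p_0 \to p_1 \to \cdots \to p_{n-1} \to p_0$ of length $n\ge 2$ with no critical values permits $n$ curves $\gamma_i$ separating $p_i$ (paired with $\infty$ or another post-critical point) from the remaining post-critical set, so that the absence of critical values inside the period ensures each $f^{-1}(\gamma_i)$ has a degree-one non-peripheral component homotopic to $\gamma_{i-1}$. In case (2), two disjoint periods free of critical values allow the analogous curves to be joined into a single $f$-stable multicurve whose preimage behavior still yields a Levy cycle. In case (3), the length-at-least-two hypothesis in each of the two periods provides the room needed to route the curves around the offending critical values without destroying the degree-one property after twisting. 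In case (4), four non-attractor periods (even if all are fixed points containing critical values) give enough post-critical points to form a stable multicurve with four curves whose preimages admit a degree-one Levy cycle structure.

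In each case, after constructing the topological multicurve I would post-compose $f$ by an appropriate product of Dehn twists $\prod T_{\gamma_i}^{\pm 1}$ to obtain a new topological polynomial $\tilde f$ realizing the same scheme, and then run the Bartholdi--Nekrashevych recursion on the $T_{\gamma_i}$ in the self-similar group. Showing the wreath-recursion restrictions satisfy $T_{\gamma_i}|_{v} = T_{\gamma_{i-1}}$ for some choice of vertex $v$ in the tree, with the other restrictions trivial, establishes both stability of $\Gamma$ and the Levy cycle condition; by Thurston's theorem $\tilde f$ then cannot be equivalent to any complex polynomial.

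The principal obstacle I expect is case (4), where none of the periods individually supplies a Levy cycle (each may be a critical-value-containing fixed point) and the obstruction must emerge from the collective interaction of four disjoint curves. Producing a multicurve whose $f$-preimages actually cycle degree-one among the four curves requires delicate control over the topological placement of curves relative to critical arcs of $f$, and then the algebraic verification of cyclic restriction in the self-similar group becomes a nontrivial wreath-recursion computation. Case (3) carries a similar but milder difficulty: the length-$\ge 2$ hypothesis is exactly what allows one to route curves around critical values in each period, and without it the construction collapses, which suggests the hypothesis is tight.
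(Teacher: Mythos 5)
Your overall direction is right—verify a Levy cycle algebraically via the $P\Sigma O_n$-bimodule machinery and produce the obstructed example by modifying a realizing polynomial—but the concrete geometry you propose diverges from the paper's proof in ways that matter, and one case as you describe it would not go through.

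The central difference is the shape of the Levy cycle. In every case the paper uses a \emph{single} curve $\Gamma$ separating a carefully chosen subset $\Omega \subset P$ from the rest of the post-critical set (and from $\infty$), and the associated multitwist is a single Dehn twist $T_\Gamma$, factored as a product $\gamma = \gamma_n \cdots \gamma_1$ of ``partial twist'' generators $\gamma_i \in P\Sigma O_n$. In case~(1) $\Omega$ is the period itself; in case~(2) it is two fixed points; in case~(3) it is whichever of the two periods has fewer incoming arrows from outside; and in case~(4) it is the \emph{union of two} of the four periods (or a single period of length $\geq 2$ inside that union). Your plan instead builds an $n$-curve multicurve in case~(1) and a $4$-curve multicurve in case~(4). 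For case~(1), pairing each $p_i$ with $\infty$ or another point creates a serious disjointness problem you would have to resolve; the single-curve construction sidesteps it entirely. For case~(4), your plan essentially cannot work as stated: if all four periods are fixed points, a curve around a single period is peripheral, and since the four periods do not cycle into one another there is no way for four curves to form a degree-one Levy cycle of length four. The paper's insight is precisely that two of the periods get enclosed by one curve and the other two exist only to feed the Riemann--Hurwitz count in Proposition~\ref{thm:omega}.

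The second gap is that you start from ``any topological polynomial realizing the scheme'' and modify it only by Dehn twists about the obstruction curves. Pre-composing by those twists changes the twist $\phi$ in $\M_f \simeq \M_{\K(f)} \otimes [\phi]$ but leaves the kneading automaton $\K(f)$ alone, and the argument actually needs $\K(f)$ to have the specific label/permutation disjointness properties (1)--(4) of Proposition~\ref{thm:omega}: all arrows within $\Omega$ labelled $0$, arrows leaving $\Omega$ with pairwise disjoint labels, and the active states in $s_\Omega$ acting on disjoint, non-interfering letters. In the quadratic case one can reach the normalized kneading bimodule by a transitive left $P\Sigma O_n$-action (Proposition~\ref{thm:comp}), but the paper does not claim such transitivity in higher degree; Proposition~\ref{thm:omega} constructs the needed automaton directly and its hypothesis is the technical heart of cases~(3) and~(4). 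Your explanation of the hypotheses also misses the point: the ``length $\geq 2$'' condition in case~(3) is not about ``routing curves around critical values'' but about guaranteeing the chosen $\Omega$ has at least two points (so $\Gamma$ is non-peripheral) while still satisfying the Riemann--Hurwitz inequality $\#(\alpha^{-1}(\Omega)\setminus\Omega) \leq \sum_{z\in C\setminus\{\infty\},\,\alpha(z)\notin\Omega}(\nu(z)-1)$. Without the combinatorial construction of Proposition~\ref{thm:omega} and the single-curve geometry, the wreath-recursion cyclicity $[\gamma_i]\otimes\M_g = \M_g\otimes[\gamma_{i-1}]$ that you (correctly) want to verify would not hold.
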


\section{Automata and bimodules}
\label{sec:automata}

In this section we give some of the standard definitions and results in the theory of self-similar groups (see \cite{Nek05} for a complete introduction to this theory) and introduce some of the key concepts from \cite{Nek09}.

\begin{definition}
\label{def:automaton}
An \emph{automaton} $A$ over an alphabet $X$ is given by a set of input states $A$, a set of output states $B$, and a transition map $\tau : A \times X \to X \times B$.
\end{definition}

If $\tau(a, x) = (y,b)$, then we write $a \cdot x = y \cdot b$ and use notation $y = a(x)$ and $b = a|_x$. We say that $b$ is the \emph{restriction} of $a$ at $x$.

\begin{definition}
\label{def:grpautomaton}
We say that an automaton is a \emph{group automaton} if for every $a \in A$ the map $x \mapsto a(x)$ is a permutation of the alphabet (we assume for group automatons that there exists trivial state $\1\in A, B$ such that $\1\cdot x = x \cdot \1$ for all $x \in X$).
\end{definition}

We represent a group automaton as a labeled directed graph called an \emph{abbreviated Moore diagram} with vertex set equal to the states $A \cup B$ and with a directed edge from $a$ to $b$ labeled by $x$ if and only if $a \cdot x = y \cdot b$ for some $y \in X$. We also label the states by the permutations they induce on $X$. For simplicity, we omit the trivial state $\1$.

In Figure \ref{fig:grigorchuk} we give an example of the abbreviated Moore diagram of the group automaton associated with Grigorchuk's group. 

\begin{figure}[h]
\includegraphics[width=2in]{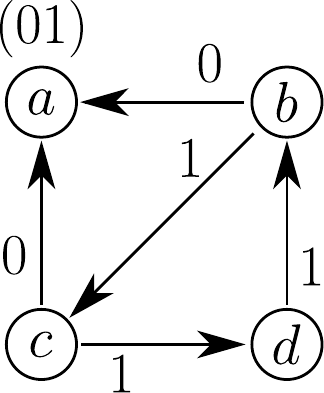}
\caption{Example abbreviated Moore diagram}
\label{fig:grigorchuk}
\end{figure}

A convenient way to describe the action of each of the states of a group automaton is with \emph{wreath recursive notation}. For alphabet $X = \{0,1, ..., d-1\}$, we represent a state $a$ by $\pi_a (a|_{0}, a|_{1}, ... , a|_{d-1})$ where the restrictions are as defined above and $\pi_a$ is the element of the symmetric group of $X$ induced by the action of $a$ (i.e. $a(x) = \pi_a(x)$ for all $x\in X$). We will omit $\pi_a$ when it is trivial (we say such a state is \emph{inactive}) and we omit the restrictions if they are all the trivial state $\1$.

The wreath recursive notations for the states of the Girgorchuk automaton in Figure \ref{fig:grigorchuk} are: $a = (01), b = (a, c), c = (a,d), d = (\1, b)$.

We can think of $X^*$ (the set of words in the finite alphabet $X$) as an infinite, rooted, $d$-ary tree (where $d$ is the size of the alphabet). The root vertex is the empty word, the first level of vertices are the letters of $X$, and each word $w \in X^*$ is adjacent to the $d$ vertices in the level below it of the form $wx$ for $x \in X$. We present the beginning of the binary tree for $X = \{0,1\}$ in Figure \ref{fig:tree}.

\begin{figure}[h]
\includegraphics[width=4in]{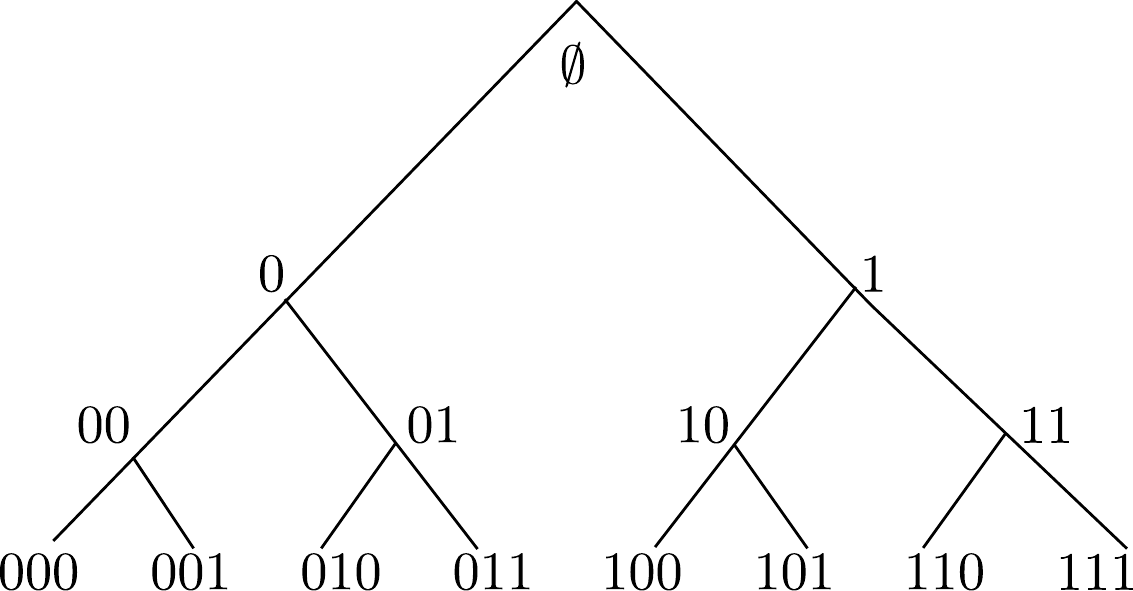}
\caption{The first three levels of the binary tree}
\label{fig:tree}
\end{figure}

When $B \subset A^*$, we have an action of $F(A)$ (the free group on the input states $A$) on the tree $X^*$ by graph automorphisms as follows. For $a \in A$, we have the action on the word $w = xw' \in X^*$ given by $a(xw') = a(x)a|_x(w')$. Notice that $a|_x \in B \subset A^*$, so the action of $a|_x$ on $w'$ is well-defined. The wreath recursion $a = \pi_a(a|_{0}, a|_{1}, ... , a|_{d-1})$ describes the automorphism of $X^*$ in the following way: $\pi_a$ gives the action of $a$ on the first level of the tree, and the restrictions $a|_{i}$ describe the actions on the subtrees (which are isomorphic to the entire tree).

We can compute the wreath recursive notation for the product (as elements of $F(A)$) of $a,b \in A$ where \\ $a = \pi_a(a|_{0}, a|_{1}, ... , a|_{d-1})$ and $b = \pi_b(b|_{0}, b|_{1}, ... , b|_{d-1})$ by: $$ab = \pi_a\pi_b (a|_{\pi_b(0)}b|_{0}, a|_{\pi_b(1)}b|_{1}, ... , a|_{\pi_b(d-1)}b|_{d-1}).$$

\begin{definition}
For $G$ and $H$ groups, a \emph{permutational ($G$-$H$)-bimodule} is a set $\M$ with a left action of $G$ and a right action of $H$ which commute. A \emph{covering bimodule} has free right action with only finitely many orbits. We call a ($G$-$G$)-bimodule simply a $G$-bimodule.
\end{definition}

We say that two ($G$-$H$)-bimodules are isomorphic if there exists a bijection between them that agrees with the actions (that is, a bijective map $F$ such that for all $g \in G, h \in H$ and $x$ in the domain bimodule we have $F(g \cdot x \cdot h) = g \cdot F(x) \cdot h$).

For $\M_1$ a ($G_1$-$G_2$)-bimodule and $\M_2$ a ($G_2$-$G_3$)-bimodule, we may form the \emph{tensor product} $\M_1\otimes \M_2$ which is the ($G_1$-$G_3$)-bimodule equal to the quotient of $\M_1 \times \M_2$ by the equivalence $$(x_1 \cdot g_2) \otimes x_2 = x_1 \otimes (g_2 \cdot x_2)$$ for all $g_2 \in G_2, x_1 \in \M_1, x_2 \in \M_2$. 
The actions are defined in the natural way: $g_1 \cdot (x_1 \otimes x_2) \cdot g_3 = (g_1 \cdot x_1) \otimes (x_2 \cdot g_3)$ for all $g_1 \in G_1, g_3 \in G_3, x_1 \in \M_1, x_2 \in \M_2$.

It is straightforward to show that the tensor product of bimodules is an associative operation and that the tensor product of covering bimodules is again a covering bimodule.

For $\M$ a covering ($G$-$H$)-bimodule, a \emph{basis} of $\M$ is an orbit transversal $X$ to the right action. 
So if $X$ is a basis, every element of $\M$ can be written uniquely as $y \cdot h$ for some $y\in X, h \in H$. 
Thus, for every $g\in G, x \in X$, we have that there exists a unique pair $y\in X, h \in H$ such that $g \cdot x = y \cdot h$. 

Notice that if we have a covering ($G$-$H$)-bimodule $\M$ with basis $X$, we may construct an abstract automaton with set of input states $G$ and set of output states $H$ over the alphabet $X$, where for any $g\in G, x \in X$ and $y \in X, h\in H$ the unique pair such that $g \cdot x = y \cdot h$, we set $g(x) = y$ and $g|_x = h$. We call this the \emph{complete automaton} for the bimodule. We can similarly define an automaton using as input states a generating set of $G$; we say that such an automaton \emph{generates} $\M$. Likewise, given a finite group automaton with $B \subset A^*$, we may define the $F(A)$-bimodule that it generates via the action on $X^*$ described previously.

\begin{definition}
\label{def:cyclediagram}
For $A = (a_i)_{i\in I}$ a sequence of permutations of a finite set $X$ (we do not use a set of permutations since we wish to allow for repeated elements), we define the \emph{cycle diagram} of $A$ to be the oriented 2-dimensional CW-complex $D(A)$ with 0-cells the elements of $X$ and for every cycle $(x_1, x_2, ... , x_n)$ of every permutation of $A$, we attach a 2-cell to the vertices $x_1, x_2, ... , x_n$ so that their order around the boundary of the 2-cell matches the order in the cycle. Two different 2-cells do not have any 1-cells in common.
\end{definition}

Figure \ref{fig:examplecycles} shows three example cycle diagrams. The first is for the permutations $(1 2 3 4), (1 2)(3 4)$, the second is for the permutations $(1 2 3), (1 3 4)$, and the third is for the permutations $(1 2)(3 4), (1 4)$.

\begin{figure}[h]
\includegraphics[width=4in]{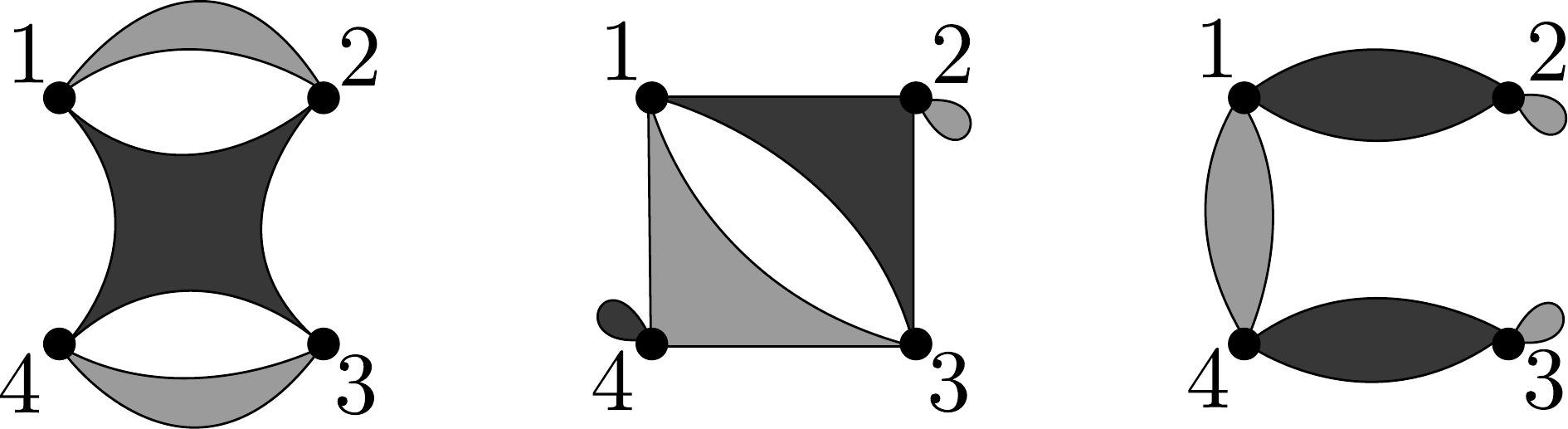}
\caption{Example cycle diagrams}
\label{fig:examplecycles}
\end{figure}

We say a sequence $A = (a_i)_{i \in I}$ of permutations of a finite set $X$ is \emph{dendroid} if its cycle diagram $D(A)$ is contractible. Notice that a dendroid sequence must be transitive, cannot have any non-trivial cycles appear more than once, and must have that any two cycles are disjoint or share only one element.

In the examples in Figure \ref{fig:examplecycles}, the first two are not dendroid, but the third is.

Alternatively, we may define a hypergraph on the vertices $X$ where each cycle of length at least two defines a hyperedge (i.e. a set containing at least two vertices). The sequence of permutations is dendroid if and only if this hypergraph is connected with no cycles.

\begin{definition}
\label{def:dendroid}
We say that a group automaton with alphabet $X$, set of input states $A$, and set of output states $B$ is \emph{dendroid} if all three of the following conditions hold:

\begin{enumerate}
\item The sequence of permutations on $X$ defined by elements of $A$ is dendroid.
\item For every $b\in B$, there exists a unique pair $a\in A, x \in X$ such that $a \cdot x = y \cdot b$ for some $y \in X$.
\item For every element $a\in A$ and every cycle $(x_1, x_2, ... , x_n)$ of the action of $a$ on $X$, we have that $a|_{x_i} =  \1$ for all but at most one index $i$.
\end{enumerate}

\end{definition}

Notice that the Grigorchuk automaton in Figure \ref{fig:grigorchuk} is \emph{not} dendriod becuase it violates condition (2) above (the state $a$ has two incoming arrows). In Figure \ref{fig:exampleautomata} we give two examples of abbreviated Moore diagrams of dendroid automata on the binary alphabet.

\begin{figure}[h]
\includegraphics[width=4in]{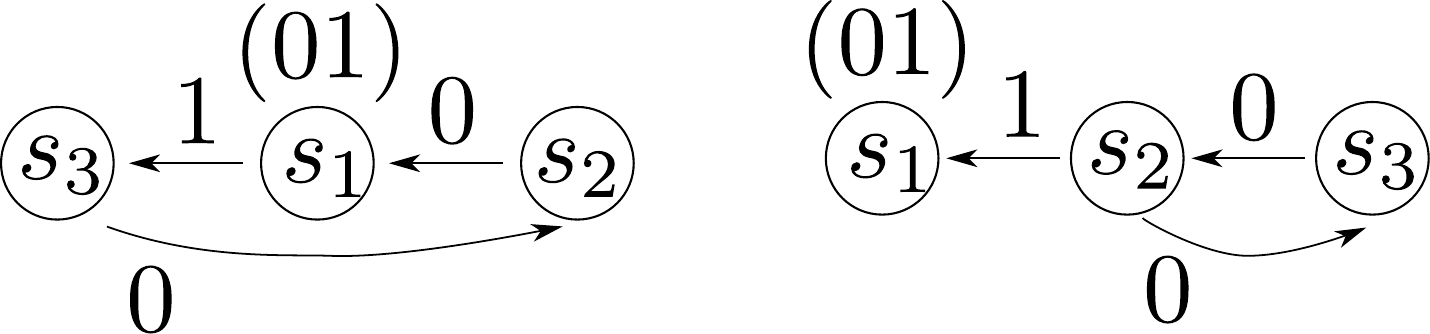}
\caption{Example dendroid automata}
\label{fig:exampleautomata}
\end{figure}

In fact, the automata in Figure \ref{fig:exampleautomata} are not only dendroid, but also satisfy the stronger definition of \emph{kneading}:

If we place a cyclic ordering $(a_1, a_2, ... , a_n)$ on the input set of states $A$ of a dendroid automaton, we get an induced cyclic ordering $(b_1, b_2, ... , b_m)$ on the output set of states $B$. A dendroid automaton is called a \emph{twisted kneading automaton} if it has cyclically ordered input set $A = (a_1, a_2, ... , a_n)$ and output set $B$ equal to conjugates of elements of $A$ with induced cyclic ordering $(a_1, a_2, ... , a_n)^\alpha$ for some $\alpha \in B_n$, the braid group on $n$ strands. The action of the generators $\sigma_i \in B_n$ is by $(a_1, ... , a_i, a_{i+1}, ... a_n)^{\sigma_i} = (a_1, ... , a_{i+1}, a_i^{a_{i+1}}, ... , a_n)$ where $a_i^{a_{i+1}}= a_{i+1}^{-1}a_ia_{i+1}$. A twisted kneading automaton with $\alpha$ trivial is simply called a \emph{kneading automaton}.

In other words, a kneading automaton is dendroid with set of output states $B$ equal to set of input states $A$. A twisted kneading automaton has $B$ instead equal to the image of $A$ under a ``twist'' by an element of the braid group. We will refer to a bimodule generated by a (twisted) kneading automaton as a (twisted) kneading bimodule.


Recall that the examples in Figure \ref{fig:exampleautomata} are kneading automata. In Figure \ref{fig:twistedautomata} we give examples of twisted kneading automata with non-trivial twist.

\begin{figure}[h]
\includegraphics[width=4in]{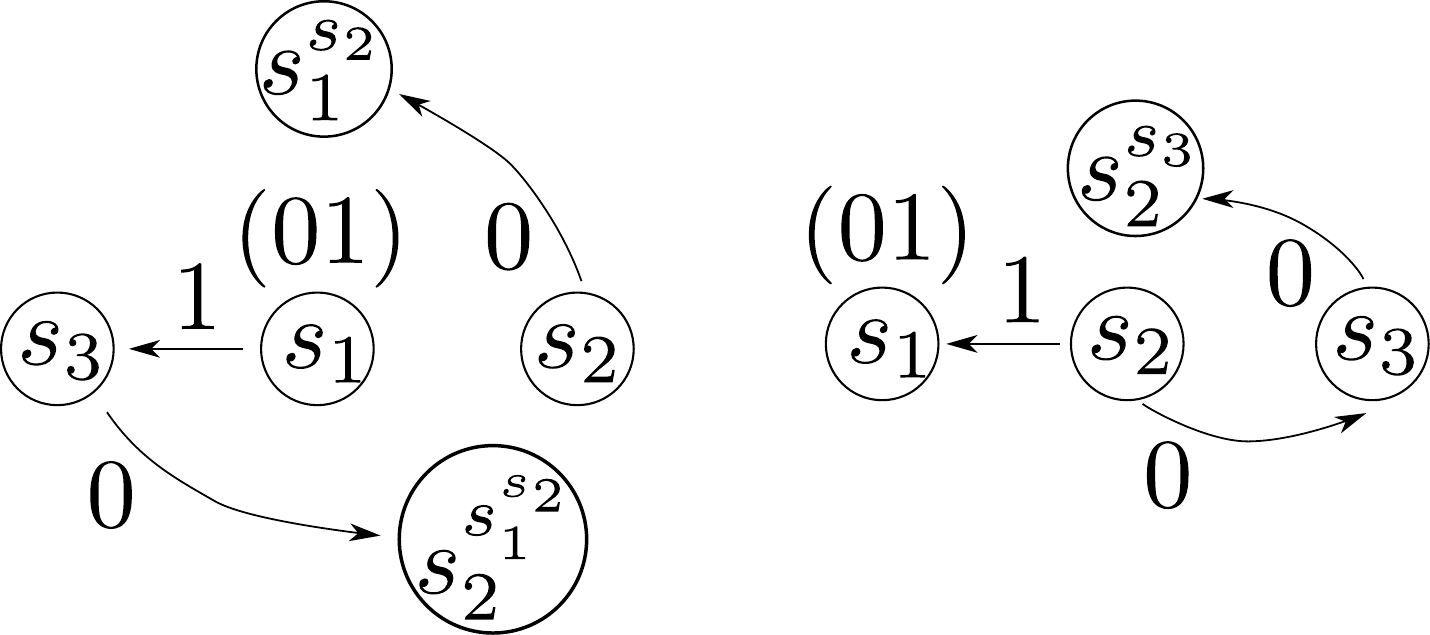}
\caption{Example twisted kneading automata}
\label{fig:twistedautomata}
\end{figure}

\section{Nekrashevych's characterization of topological polynomials}
\label{sec:Nekrashevych}

In this section we continue to summarize the definitions and results of \cite{Nek09}.

For $f : S^2 \to S^2$ a topological polynomial with post-critical set $P_f = \{\infty, \omega_1, ... , \omega_{n}\}$, let $\{s_i\}_{i=1}^n$ be a planar generating set of $\pi_1(S^2 \setminus P_f, t)$, for $t \in S^2 \setminus P_f$ the basepoint on the circle at infinity (see Section \ref{sec:thurston}). That is, on the closed disc that is a retraction of $S^2 \setminus \{\infty\}$, the generator $s_i$ is a simple loop based at $t$ going around $\omega_i$ in the positive direction, and the loops are cyclically ordered in the positive direction.

Figure \ref{fig:gensets} gives three examples of different planar generating sets with the same set of punctures.

\begin{figure}[h]
\includegraphics[width=5in]{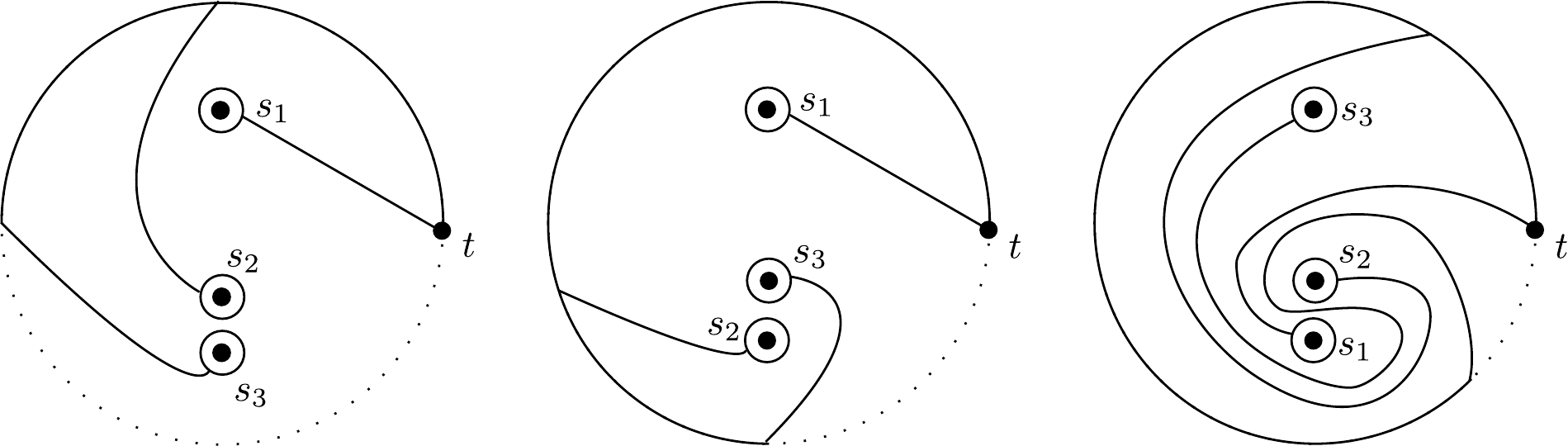}
\caption{Example planar generating sets}
\label{fig:gensets}
\end{figure}

Let $t_j \in f^{-1}(t) = \{t = t_0, t_1, ... , t_{d-1}\}$. Define $\M_f$ to be the $(\pi_1(S^2 \setminus P_f, t)-\pi_1(S^2\setminus P_f, t_j))$-bimodule of homotopy classes of paths in $S^2 \setminus P_f$ starting at $t_j$ and ending at any point in $f^{-1}(t)$. The right action of $\pi_1(S^2\setminus P_f, t_j)$ is by concatenation of the loop at $t_j$ to the beginning of the path, and the left action of $\pi_1(S^2\setminus P_f, t)$ is by concatenation of the $f$-lift of the loop at $t$ to the end of the path.

Up to isomorphism, $\M_f$ does not depend on the choice of basepoints. If we choose a path from $t$ to $t_j$, we may identify the fundamental groups in the usual way, and the isomorphism class of the $\pi_1(S^2\setminus P_f)$-bimodule $\M_f$ does not depend on the choice of this path. So we will choose $j=0$ and so identify the fundamental groups by the constant path at $t=t_0$.

Notice that the right action is free and that two elements of $\M_f$ belong to the same right orbit if and only if they have the same endpoints. So the number of right orbits equals the degree of $f$, and the bimodule is covering.

We make a canonical choice for a basis of $\M_f$, $X= \{0, 1, ... , d-1\}$. The basis element $k$ for the orbit associated with the endpoint $t_k$ is the path in the positive direction along the circle at infinity from $t$ to $t_k$.

A result of Nekrashevych states that the combinatorial data encoded in this bimodule completely describes the post-critical dynamics of the topological polynomial, up to Thurston equivalence.

\begin{theorem}\cite{Nek09}\label{thm:Nek1}
The bimodule $\M_f$ defined above is generated by a twisted kneading automaton, and the twisted kneading automaton associated to the topological polynomial $f$ along with the cyclic order $(s_1, s_2, ... , s_n)$ of the generators of $F_n = \pi_1(S^2 \setminus P_f)$ uniquely determine the Thurston equivalence class of $f$.
\end{theorem}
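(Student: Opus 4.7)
The plan is to verify two claims: first, that the bimodule $\M_f$ admits a generating automaton with input states $\{s_1,\ldots,s_n\}$ satisfying the three dendroid conditions together with the twisted kneading structure; and second, that this automaton, together with the cyclic ordering of the inputs, is a complete invariant of the Thurston equivalence class of $f$.

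For the first claim, I would compute the left action on the canonical basis $X=\{0,\ldots,d-1\}$ explicitly. The $f$-lift of $s_i$ starting at $t_k$ traces a small arc near a single preimage $y$ of $\omega_i$ and ends at some $t_{k'}$, so the restriction $s_i|_k$ is realized by the loop at $t_0$ obtained by concatenating the basis path $k$, this lifted arc, and the reverse of $k'$. Within one cycle of $s_i$ of length $\ell$ on $X$ (corresponding to a preimage $y$ of local degree $\ell$), the restrictions are trivial at all but one designated position; the nontrivial restriction is trivial when $y \notin P_f$ (the loop bounds a disk in $S^2 \setminus P_f$), and a conjugate of the generator $s_j$ with $\omega_j = y$ otherwise. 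To verify the dendroid conditions, I would identify the cycle diagram $D(s_1,\ldots,s_n)$ with a CW decomposition of the closed disk $S^2 \setminus \{\infty\}$: the $0$-cells are the basis points $t_0,\ldots,t_{d-1}$; the $1$-cells are arcs of the circle at infinity between them; and each $2$-cell is the $f$-preimage of a small disk around some $\omega_i$, whose boundary traverses the $t_k$ in the order prescribed by the corresponding cycle of $s_i$. Contractibility of the disk then yields condition (1) of dendroid-ness, and the fact that each preimage of each $\omega_i$ contributes exactly one cycle and exactly one nontrivial restriction yields conditions (2) and (3). For the twisted kneading property, I would read off the cyclic ordering of the nontrivial output conjugates around the circle at infinity and show that it is obtained from the input order $(s_1,\ldots,s_n)$ by a braid $\alpha \in B_n$ encoding the monodromy braiding of the preimages of post-critical points induced by a loop around $\infty$.

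For the second claim, the forward direction is natural: a Thurston equivalence $(\phi_0,\phi_1)$ induces a canonical isomorphism between the cyclically-ordered bimodules $\M_f$ and $\M_g$ by push-forward of paths. The reverse direction uses the correspondence between covering $\pi_1(S^2\setminus P)$-bimodules and isotopy classes of branched covers of the sphere branched over $P$: the virtual endomorphism associated to the bimodule reconstructs $f$ up to isotopy rel $P_f$, and the cyclic ordering of the generators pins down the planar embedding so that the reconstructed cover is a topological polynomial realizing the prescribed automaton. The main obstacle is executing this reconstruction while rigorously tracking the planar structure: the twist $\alpha$ must be shown to encode exactly the braiding of post-critical preimages around $\infty$, and one must verify that no further planar data beyond the cyclically-ordered twisted kneading automaton is required to pin down $f$ up to Thurston equivalence.
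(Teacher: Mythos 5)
The paper cites this theorem from \cite{Nek09} and does not reproduce a proof, so there is no internal argument to compare yours against; the closest thing is Lemma~\ref{thm:cycles}, which establishes the local structure of cycles and restrictions in $\K(f)$ via exactly the geometric picture you use for your first claim (small loops around preimages, connecting arcs to preimages of the basepoint, and the correspondence between local degree at a preimage and cycle length of the corresponding state).

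Your sketch is a reasonable reconstruction of Nekrashevych's approach, and the CW-decomposition idea for dendroid-ness (identifying $D(s_1,\ldots,s_n)$ with the $f$-preimage of the planar ``star'' of generators together with small disks around the post-critical points, and invoking contractibility of the disk) is indeed the right geometric mechanism. Two points deserve more care before you could call this a proof. First, the contractibility argument needs the cycle diagram to be a deformation retract of the punctured disk, not merely embedded in it; you must check that the $2$-cells (preimages of small neighborhoods of the $\omega_i$) and the arcs along the circle at infinity assemble into a spine of $S^2\setminus(\{\infty\}\cup f^{-1}(P_f))$ without leaving unaccounted complementary regions, which is where the assumption that $P_f$ contains all critical values enters. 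Second, and more substantially, your reverse implication in the uniqueness claim is where the real work lies. The assertion ``covering bimodules correspond to isotopy classes of branched covers'' is only enough once you have shown that the ambiguity in the choice of planar generating set and basis is exactly absorbed by pre- and post-tensoring with bimodules $[\alpha]$, $[\beta]$ for $\alpha,\beta$ in the image of $PMod(S^2\setminus P_f)$ inside $P\Sigma O_n$; this is why the paper immediately introduces the $P\Sigma O_n$-bimodule $\G_f$ after the theorem. Your proposal gestures at this (``no further planar data beyond the cyclically-ordered twisted kneading automaton is required'') but does not explain how the cyclic order of the $s_i$ rigidifies the planar embedding, nor why an arbitrary isomorphism of bimodules necessarily arises from a pair of homeomorphisms $(\phi_0,\phi_1)$ as in the definition of Thurston equivalence rather than merely from an abstract group isomorphism. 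Making that step precise is the core of Nekrashevych's argument and is not automatic.
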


In fact, these bimodules can be described more explicitly.

For $F_{n} = F(s_1, s_2, ... , s_{n})$ the free group on $n$ generators, define $a_{i, j} \in Aut(F_{n})$ by $a_{i, j}(s_i) = s_i^{s_j}$ and for all $k \neq i$, $a_{i, j}(s_k) = s_k$. Notice that $a_{j, j}$ and $[a_{i, j}, a_{k, j}]$ are trivial for all $1 \leq i, j, k \leq n$, $[a_{i,j}, a_{k,l}]$ is trivial for all $i,j,k,l$ distinct, and for a fixed $j_0$ we have $\prod_{1 \leq i \leq n} a_{i, j_0} \in Inn(F_{n})$. For $q: Aut(F_{n}) \to Out(F_{n})$ the quotient map, let $P\Sigma O_n$ be the image under $q$ of $P\Sigma_n = \langle a_{i, j} \rangle \leq Aut(F_{n})$. We call $P\Sigma O_n$ the \emph{pure symmetric outer automorphism group} of the free group of rank $n$. From now on, we will abuse notation and write $a_{i,j}$ for its image in $Out(F_n)$.

For $G$ a group and $\alpha \in Aut(G)$, we define the associated $G$-bimodule $[\alpha]$ to be the set of expressions $\alpha \cdot g$ for $g \in G$ with the actions $h_l \cdot (\alpha \cdot g) \cdot h_r = \alpha \cdot \alpha(h_l) gh_r$ for all $h_l, h_r \in G$. It is easy to show that for $\alpha \in Inn(G)$ and $\M$ any $G$-bimodule, $[\alpha] \otimes \M \simeq \M \simeq \M \otimes [\alpha]$, so we can uniquely define the isomorphism class of the bimodule for $\alpha \in Out(G)$.

What Nekrashevych actually showed in the proof of Theorem \ref{thm:Nek1} is that the bimodule $\M_f$ is isomorphic to a twisted kneading bimodule of the form $\M_{\K(f)} \otimes [\phi]$ where $\M_{\K(f)}$ is a kneading bimodule and $\phi \in P\Sigma O_n$. He went on to define the $P\Sigma O_n$-bimodule $\mathfrak{G}_f = \{[\alpha] \otimes \M_f \otimes [\beta] \ | \  \alpha, \beta \in P\Sigma O_n \}$, with the natural left and right actions. Nekrashevych proved the following:

\begin{proposition}\cite{Nek09}\label{thm:Nek2}
Every twisted kneading automaton over $F_n$ is associated with some post-critically finite topological polynomial.
\end{proposition}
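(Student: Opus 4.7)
The plan is to construct a topological polynomial $f$ directly from the combinatorial data of the given twisted kneading automaton $T$, with alphabet $X=\{0,\dots,d-1\}$ and cyclically ordered input states $(a_1,\dots,a_n)$. I would first treat the untwisted (kneading) case, and then show that the braid twist $\alpha\in B_n$ is absorbed by a change of planar generating system.

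For the construction, place a basepoint $t$ and marked points $\omega_1,\dots,\omega_n$ in the disk $D=S^2\setminus\{\infty\}$ with planar generators $s_1,\dots,s_n$ of $\pi_1(D\setminus\{\omega_1,\dots,\omega_n\},t)$ in the prescribed cyclic order. Slice $D$ along disjoint arcs $\gamma_i$ running from a neighborhood of $\partial D$ to each $\omega_i$. Take $d$ copies of this sliced disk, index them by $X$, and identify their boundary arcs around a common ``circle at infinity'' carrying preimages $t_0,\dots,t_{d-1}$ in cyclic order. For each input state $a_i$ and each cycle $(k_1,\dots,k_\ell)$ of the permutation $a_i$ on $X$, glue the copies of $\gamma_i$ coming from sheets $k_1,\dots,k_\ell$ in cyclic order to form a disk containing a single new critical point of local degree $\ell$ mapping to $\omega_i$. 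The dendroid hypothesis---that the cycle diagram of $(a_1,\dots,a_n)$ is contractible---is precisely what forces the resulting surface to be $S^2$ with a single distinguished totally ramified point $\infty$, since it guarantees that the preimage of the spine $\bigcup\gamma_i$ is itself a tree rather than a graph with cycles or extra components. Post-critical finiteness is then automatic: every new critical point maps to some $\omega_i$, so $P_f\subseteq\{\infty,\omega_1,\dots,\omega_n\}$.

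The braid twist $\alpha$ is handled via the Hurwitz action on planar generating systems of $\pi_1(D\setminus\{\omega_i\},t)$, which acts by exactly the formula $(a_1,\dots,a_i,a_{i+1},\dots,a_n)^{\sigma_i}=(a_1,\dots,a_{i+1},a_i^{a_{i+1}},\dots,a_n)$ quoted from the excerpt. Thus, performing the above construction for the underlying kneading automaton and then reinterpreting the resulting bimodule with respect to the $\alpha$-twisted generating system recovers the original twisted data. The main obstacle will be verifying rigorously that the $\pi_1(S^2\setminus P_f,t)$-bimodule $\M_f$ of the constructed cover matches the bimodule generated by $T$. Concretely, for each input state $a_i$ and basis element $k\in X$, one must check that the $f$-lift of $s_i$ starting at $t_k$ ends at $t_{a_i(k)}$ and, after concatenation with the canonical basis paths along the circle at infinity, represents exactly the free group element $a_i|_k$. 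This is a local computation at each cycle of $a_i$, and it will rely crucially on the kneading condition that at most one restriction per cycle is non-trivial---this is what makes it possible to place all nontrivial holonomy on a single sheet when gluing around each new critical point, while the remaining sheets contribute trivially.
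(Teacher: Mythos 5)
The paper does not prove this proposition itself; it cites it directly from \cite{Nek09}, so there is no ``paper's own proof'' to compare against. Judged on its own terms, your construction is the right one and matches Nekrashevych's approach: you take the combinatorial data of the automaton, realize it as a branched cover of $S^2$ via a cut-and-paste (``spider'') construction over a spine of arcs from a point near $\infty$ to the marked points $\omega_i$, use the dendroid hypothesis to guarantee the total space is a sphere, and absorb the braid twist via the Hurwitz action on planar generating sets. Your reduction of the dendroid condition to the statement that the preimage of the spine is a tree is also correct; it is exactly the Euler-characteristic count $\sum(\ell-1)=d-1$ plus connectivity, which is equivalent to contractibility of the cycle diagram.

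One step you leave implicit deserves to be stated, because it is where the remaining dendroid conditions actually enter. Building the branched cover $\tilde S\to S^2$ only gives you a map between \emph{different} spheres; to get a Thurston map you must choose an identification $\tilde S\cong S^2$ carrying the totally ramified point to $\infty$ and carrying the appropriate preimage of each $\omega_i$ to $\omega_j$. The automaton dictates which preimage: the arrow from $s_i$ to $s_j$ with label $k$ tells you that $\omega_j$ is to be identified with the preimage of $\omega_i$ sitting over sheet $k$ (a critical preimage if $a_i$ is nontrivial on the cycle containing $k$, a regular preimage otherwise). Condition (2) of the dendroid definition --- each output state receives exactly one incoming arrow --- is what makes this identification well-defined, and condition (3) --- at most one nontrivial restriction per cycle --- is what makes it consistent with the fact that each nontrivial cycle produces only a single critical preimage. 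Your closing paragraph gestures at this via ``placing all nontrivial holonomy on a single sheet,'' but the identification itself, not just the holonomy bookkeeping, is the content. Similarly, ``performing the construction for the underlying kneading automaton'' should be spelled out as: realize the twist $\phi\in P\Sigma O_n$ by an explicit homeomorphism $h$ of the punctured sphere and set $f=f_0\circ h$, then check $\M_f\simeq\M_{f_0}\otimes[\phi]$. Neither gap is fatal, and the verification you flag as ``the main obstacle'' is indeed a routine local computation once these identifications are in place; but as written, the proposal omits the step that turns a branched cover into a self-map.
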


We can think of $\alpha, \beta \in P\Sigma O_n$ as acting on $S^2 \setminus P_f$ and the tensor operation as functional composition. Figure \ref{fig:aij} demonstrates the action of $a_{i,j}$ on $S^2 \setminus P_f$ and the planar generating set $\{s_1, ... , s_{n}\}$.

\begin{figure}[h]
\includegraphics[width=5in]{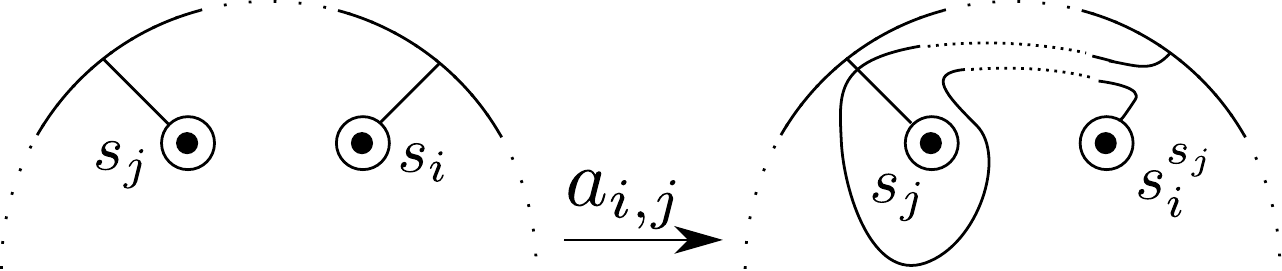}
\caption{Action of $a_{i,j}$}
\label{fig:aij}
\end{figure}

\begin{lemma}\label{lem:twist}
The action of $a_{i+1,i}a_{i,i+1}$ is that of a Dehn Twist about a curve separating $\{\omega_i, \omega_{i+1}\}$ from the rest of $P_f$ ($\omega_i$ is the puncture surrounded by $s_i$).
\end{lemma}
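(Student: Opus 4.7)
My plan is to verify the lemma by a direct computation in $\mathrm{Out}(F_n)$: write down the action of $a_{i+1,i}a_{i,i+1}$ on each planar generator $s_k$, and then recognize the resulting automorphism as the standard action of a Dehn twist about the desired separating curve.

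First I would record, from the definition of $a_{i,j}$, that $a_{i,i+1}$ fixes every $s_k$ with $k\neq i$ and sends $s_i\mapsto s_{i+1}^{-1}s_i s_{i+1}$, while $a_{i+1,i}$ fixes every $s_k$ with $k\neq i+1$ and sends $s_{i+1}\mapsto s_i^{-1}s_{i+1}s_i$. Composing immediately gives $a_{i+1,i}a_{i,i+1}(s_k)=s_k$ for all $k\notin\{i,i+1\}$, so the only nontrivial behavior occurs on $s_i$ and $s_{i+1}$.

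Next I would compute those two values. A direct substitution yields
\[
a_{i+1,i}a_{i,i+1}(s_i)=a_{i+1,i}(s_{i+1}^{-1}s_i s_{i+1})=s_i^{-1}s_{i+1}^{-1}s_i s_{i+1}s_i,
\qquad
a_{i+1,i}a_{i,i+1}(s_{i+1})=s_i^{-1}s_{i+1}s_i.
\]
Setting $\gamma=s_{i+1}s_i$, one checks that both right-hand sides equal $\gamma^{-1}s_i\gamma$ and $\gamma^{-1}s_{i+1}\gamma$ respectively; so on the subgroup $\langle s_i,s_{i+1}\rangle$ the automorphism is conjugation by $\gamma^{-1}$, and it is the identity on every other generator. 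Since $s_1,\dots,s_n$ is a planar generating set based at the point $t$ on the circle at infinity, the loop $\gamma=s_{i+1}s_i$ is freely homotopic to a simple closed curve $\delta\subset S^2\setminus P_f$ enclosing precisely $\omega_i$ and $\omega_{i+1}$ and separating them from the remaining post-critical points.

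Finally I would match this formula against the well-known action of a Dehn twist. For a simple closed curve $\delta$ bounding a disk in $S^2\setminus P_f$ containing exactly the punctures $\omega_i,\omega_{i+1}$, the Dehn twist $T_\delta$ fixes every generator $s_k$ whose loop lies outside $\delta$ (i.e.\ $k\notin\{i,i+1\}$) and conjugates the two ``interior'' generators $s_i,s_{i+1}$ by $[\delta]^{\pm 1}=\gamma^{\pm 1}$, with the sign determined by the chosen orientation of the twist. This exactly matches the computation above, proving the lemma. The only real subtlety—essentially the one step where care is required—is the sign/orientation bookkeeping: one must verify that the planar convention for the generating set $(s_1,\dots,s_n)$ (positively oriented loops, cyclically ordered around infinity) renders $s_{i+1}s_i$ (rather than $s_is_{i+1}$) the correct representative of $\delta$, and that the direction of the twist is consistent with Figure \ref{fig:aij}. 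Once these conventions are fixed, the identification is immediate.
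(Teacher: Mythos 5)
Your algebraic computation is correct and gives a clean proof of the lemma, but it is a genuinely different route from the paper's. The paper's entire argument is a reference to Figure \ref{fig:twist}: one simply \emph{draws} the effect of applying first $a_{i,i+1}$ and then $a_{i+1,i}$ on the planar generating set, using the picture of $a_{i,j}$ in Figure \ref{fig:aij}, and observes directly that the resulting homeomorphism is (isotopic to) a Dehn twist about a curve enclosing $\omega_i$ and $\omega_{i+1}$. Your proof instead works purely in $\mathrm{Aut}(F_n)$: expanding the definitions $a_{i,i+1}(s_i)=s_{i+1}^{-1}s_is_{i+1}$ and $a_{i+1,i}(s_{i+1})=s_i^{-1}s_{i+1}s_i$, you correctly find that $a_{i+1,i}a_{i,i+1}$ fixes $s_k$ for $k\notin\{i,i+1\}$ and acts on $\langle s_i,s_{i+1}\rangle$ by conjugation by $\gamma=s_{i+1}s_i$ (a representative of the boundary of a disk containing exactly $\omega_i,\omega_{i+1}$), which is precisely the standard $\pi_1$-level description of a Dehn twist about such a curve. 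What the picture buys is immediacy and no sign-chasing; what your computation buys is rigor and independence from the figure—in particular, the algebraic form ``conjugate the two enclosed generators by the boundary word, fix all others'' makes it transparent why the more general twists (about curves enclosing $\omega_i,\omega_{i+1},\omega_{i+2}$, etc.) appearing right after the lemma have the stated expressions in the $a_{p,q}$.

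One point worth tightening: you flag the orientation bookkeeping as the lone subtlety, and you should also note that the answer depends on the composition convention for $\mathrm{Aut}(F_n)$. With the left-acting convention $(\alpha\beta)(x)=\alpha(\beta(x))$ you get conjugation by $\gamma=s_{i+1}s_i$; with the other convention you get $\gamma'=s_is_{i+1}$. Both words are representatives of the free homotopy class of the separating curve, so the conclusion is unaffected, but since $\gamma$ and $\gamma'$ induce different automorphisms (not merely different outer automorphisms) it is cleaner to state which convention you are using before declaring the match with $T_\delta$. With that caveat made explicit, the proof is complete.
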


\begin{proof}
See Figure \ref{fig:twist}.
\end{proof}

\begin{figure}[h]
\includegraphics[width=5in]{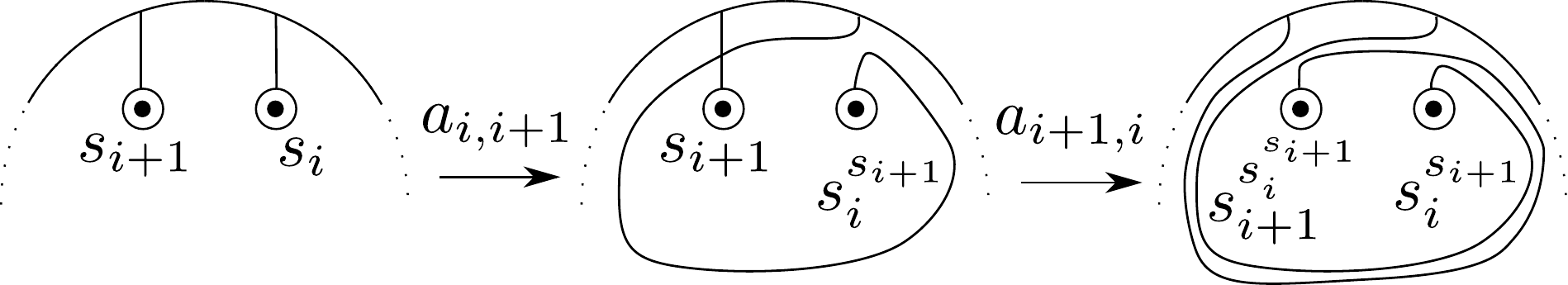}
\caption{Action of $a_{i+1,i}a_{i,i+1}$ by a Dehn twist}
\label{fig:twist}
\end{figure}

Further, we may represent a Dehn Twist about a curve separating $\{\omega_i, \omega_{i+1}, \omega_{i+2}\}$ from the rest of the post-critical set by the word $a_{i+2, i+1}a_{i+2, i}a_{i+1, i}a_{i+1, i+2}a_{i, i+2}a_{i, i+1} \in P \Sigma O_{n}$. And so on.

In this way, we see that the pure mapping class group $PMod(S^2 \setminus P_f)$ is a subgroup of $P\Sigma O_n$. In fact, the $P\Sigma O_n$-bimodule $\mathfrak{G}_n$ was originally defined by Bartholdi and Nekrashevych as a $PMod(S^2 \setminus P_f)$-bimodule \cite{BN06} and later extended by Nekrashevych \cite{Nek09}.

The twisted kneading bimodule $\M_{\K(f)} \otimes [\phi]$ encodes a topological description of the preimages of the planar generating set $\{s_i\}_{i=1}^n$ under the topological polynomial $f$. The kneading bimodule $\M_{\K(f)}$ is generated by a kneading automaton $\K(f)$ determined (up to labels) by the mapping scheme of $f$, which we will denote by $S(f)$. The kneading automaton $\K(f)$ has a state for each generator $s_i \in \pi_1(S^2 \setminus P_f)$. We have an arrow from state $s_i$ to state $s_j$ if and only if $f(\omega_j) = \omega_i$. So the unlabeled Moore diagram of this kneading automaton is the subgraph of $S(f)$ induced by the vertices in $P_f$, but with the arrows reversed.

The labels are determined by numbering the lifts of the basepoint $t$. Recall the alphabet $X = f^{-1}(t) = \{0, 1, ... , d-1\}$ for $d = deg(f)$. We label the directed edge from $s_i$ to $s_j$ by $k$ if $\omega_j$ is contained in the loop that goes from $t$ to $t_k$ along the circle at infinity in the positive direction, then follows the lift of $s_i$ starting at $t_k$, and then travels back along the circle at infinity in the negative direction to $t$. The active states will be those associated with critical values of $f$. For $\omega_i$ a critical value of $f$, we have $s_i(k) = k'$ if the $f$-lift of $s_i$ that starts at $t_k$ ends at $t_{k'}$.

Notice that the permutations of an active state and the coordinates of the non-trivial restrictions are related by the type of critical point(s) with which the corresponding critical value is associated.

\begin{lemma}
\label{thm:cycles}

For $\omega_0$ a critical value of topological polynomial $f$ and $\{ \omega_1, \omega_2, ... , \omega_k\} = f^{-1}(\omega_0)$ with the local degree of $f$ at $\omega_i$ equal to $d_i$ for $1 \leq i \leq k$, then:

\begin{enumerate}
\item The action of the state $s_0$ in the kneading automaton $\K(f)$ will have $k$ cycles, and their lengths will be given by the set $\{d_i\}_{i=1}^{k}$.

\item If $\omega_i \in P_f$, then the label of the arrow from state $s_0$ to $s_i$ (the state corresponding with $\omega_i \in P_f$) will be one of the coordinates on which $s_0$ acts by a cycle of length $d_i$. 

\item Any labels of edges in $\K(f)$ from $s_0$ to states not associated with critical points will be of coordinates on which $s_0$ acts trivially.

\end{enumerate}

\end{lemma}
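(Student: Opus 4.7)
The plan is to derive all three statements from the standard monodromy of branched covers together with the explicit geometric definition of the labels of $\K(f)$. By construction, the state $s_0$ acts on $X = f^{-1}(t) = \{t_0, \ldots, t_{d-1}\}$ via $s_0(k) = k'$, where the $f$-lift of the loop $s_0$ starting at $t_k$ terminates at $t_{k'}$. This is exactly the monodromy permutation of the loop $s_0$ for the unbranched covering $f : S^2 \setminus f^{-1}(P_f) \to S^2 \setminus P_f$, so all three conclusions are assertions about how this monodromy permutation is arranged with respect to the chosen basis of $X$.

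For part (1), I would invoke homotopy invariance of monodromy to replace $s_0$ by a representative that travels from $t$ into a small disk $D$ around $\omega_0$, loops once around $\omega_0$, and retraces back to $t$. The preimage $f^{-1}(D)$ is a disjoint union of small disks $D_1, \ldots, D_k$ around $\omega_1, \ldots, \omega_k$, and on each $D_i$ the map $f$ is locally conjugate to $z \mapsto z^{d_i}$. The lift of the small loop around $\omega_0$ into $D_i$ consists of $d_i$ arcs that concatenate into a simple loop encircling $\omega_i$ once, cyclically permuting the $d_i$ preimages of the basepoint that lie over $D_i$. Summing over $i$ partitions the $d = \sum d_i$ sheets into disjoint cycles of lengths $d_1, \ldots, d_k$, which is exactly the claim.

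For part (2), I would argue directly from the defining formula for the label: the edge $s_0 \to s_i$ carries the label $k$ iff the closed curve $\ell_k$, obtained by concatenating the positively-oriented arc on the circle at infinity from $t$ to $t_k$, the lift of $s_0$ from $t_k$ to $t_{s_0(k)}$, and the negatively-oriented arc from $t_{s_0(k)}$ back to $t$, encloses $\omega_i$. Because $\omega_i$ is a preimage of $\omega_0$ enclosed by $\ell_k$, the sheet $t_k$ must lie in the cycle of $s_0$ associated by part (1) to $\omega_i$, and this cycle has length $d_i$. Part (3) is the special case where the target $\omega_j$ is a non-critical preimage of $\omega_0$: then the local degree at $\omega_j$ is $1$, so by part (1) the cycle of $s_0$ associated to $\omega_j$ is a singleton, forcing the label of any edge $s_0 \to s_j$ to be a fixed point of $s_0$.

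The main obstacle is the planar bookkeeping underlying part (2), namely identifying the cycle of $s_0$ swept out by the lift passing through $t_k$ with the cycle associated by part (1) to the preimage $\omega_i$ that $\ell_k$ encloses. This requires using the cyclic arrangement of $\{t_0, \ldots, t_{d-1}\}$ on the circle at infinity inherited from the planar generating set, together with the observation that two sheets $t_k$ and $t_{k'}$ lie in the same monodromy cycle of $s_0$ precisely when the lifts of $s_0$ at those points can be concatenated into a loop around a single preimage of $\omega_0$. Once this identification is in hand, the three conclusions follow without further calculation.
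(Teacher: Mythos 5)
Your proposal tracks the paper's own argument closely: both rest on replacing $s_0$ by a small-disk representative near $\omega_0$, lifting to the disks around each $\omega_i$ where $f$ looks like $z\mapsto z^{d_i}$, and reading the cycle structure and the labels off that local picture. The planar bookkeeping you flag at the end for part (2) is exactly what the paper's proof handles (also informally) by observing that only one of the $d_i$ arc-loop-arc paths through the small loop about $\omega_i$ actually encloses $\omega_i$, and none of the paths through other $\omega_j$ do; so the level of rigor and the method are the same, and your deduction of part (3) as the $d_i=1$ special case of parts (1)--(2) is a perfectly good restatement of the paper's direct observation that the preimage about a non-critical point starts and ends at the same $t_k$.
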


\begin{proof}

These properties follow more or less immediately from the definitions of $\M_f$ and $\K(f)$.

\begin{enumerate}

\item If $s_0$ is a small loop about $\omega_0$ connected by an arc to $t$, then its preimages about $\omega_i$, a critical point with local degree $d_i$ will be a small loop about $\omega_i$ with $d_i$ arcs connecting it to $d_i$ preimages of $t$. Notice that this will give a cycle of these $d_i$ preimages in $S(f)$.

\item Continuing as above, notice that one of the paths from $t$ to its preimage, along the arc to the loop about $\omega_i$, back along the next arc in the positive direction to a different preimage of $t$, and then back to $t$ will contain $\omega_i$ in its interior, and starting at any other preimage will not contain $\omega_i$.

\item Unlike in our first observation, if $s_0$ is a small loop about $\omega_0$ connected by an arc to $t$, then its preimage about a non-critical post-critical point $\omega$ will also consist of a small loop about $\omega$ connected by an arc, but to a preimage of $t$. Thus, the preimage of $s_0$ which starts at this preimage of $t$ (which is the coordinate on which $s_0$ will restrict to the state associated with $\omega$) also terminates at this preimage of $t$.

\end{enumerate}

\end{proof}

Now a lift of $s_i$ might not be homotopic to a generator; it might even surround multiple post-critical points. Expressing these lifts as elements of $F(s_1, ... , s_n)$ allows us to determine the twist $\phi \in P\Sigma O_n$ associated with the polynomial. This will not be important to our work, and so we will only give an example (see below) and refer the reader to \cite{BN06}, \cite{Nek05}, \cite{Nek09} to understand the details of how to compute the element $\phi$.

In Figures \ref{fig:f} and \ref{fig:allg}, we give two examples of starting with a topological polynomial (we use $f$ and $g$ from Figure \ref{fig:examplepolys}), finding its mapping scheme, and then producing the associated kneading automaton. Here we explain how to find $\phi \in P\Sigma O_3$ such that $\M_f \simeq \M_0 \otimes [\phi]$ where $\M_0$ is the bimodule generated by the kneading automaton $\K(f)$:

If we read off the automaton generating $\M_f$ from the map $f$ at the top of Figure \ref{fig:f}, we get the following wreath recursion:

\begin{gather*}
s_1 = (01) (s_2^{-1}s_1^{-1}, s_1s_2s_3)\\
s_2 = (s_1, \1)\\
s_3 = (s_2, \1)
\end{gather*}

Notice that this automaton is not kneading or twisted kneading. Let $\alpha \in Aut(F_3)$ be the conjugation by $s_2s_3$. Since $\alpha \in Inn(F_3)$, we have that $[\alpha] \otimes \M_f \simeq \M_f$. To compute the automaton generating $[\alpha] \otimes \M_f$, we need only conjugate our wreath recursion for $\M_f$ by $s_2s_3 = (s_1s_2, \1)$. This gives us:

\begin{gather*}
s_1 = (01) (\1, s_3)\\
s_2 = (s_1^{s_2}, \1)\\
s_3 = (s_2^{s_1^{s_2}}, \1)
\end{gather*}

This automaton \emph{is} twisted kneading with kneading automaton $\K(f)$ (see the bottom of Figure \ref{fig:f}) and twist $\phi = a_{2,1}a_{1,2}$. The wreath recursion for $\K(f)$ is just the previous one without the conjugations, since we have separated the action of the twist $\phi$:

\begin{gather*}
s_1 = (01) (\1, s_3)\\
s_2 = (s_1, \1)\\
s_3 = (s_2, \1)
\end{gather*}

The reader may wish to check understanding of this process by computing the twist for $g$ in Figure \ref{fig:allg} (the correct twist is $a_{2,3}$).

\begin{figure}[h]
\includegraphics[width=5in]{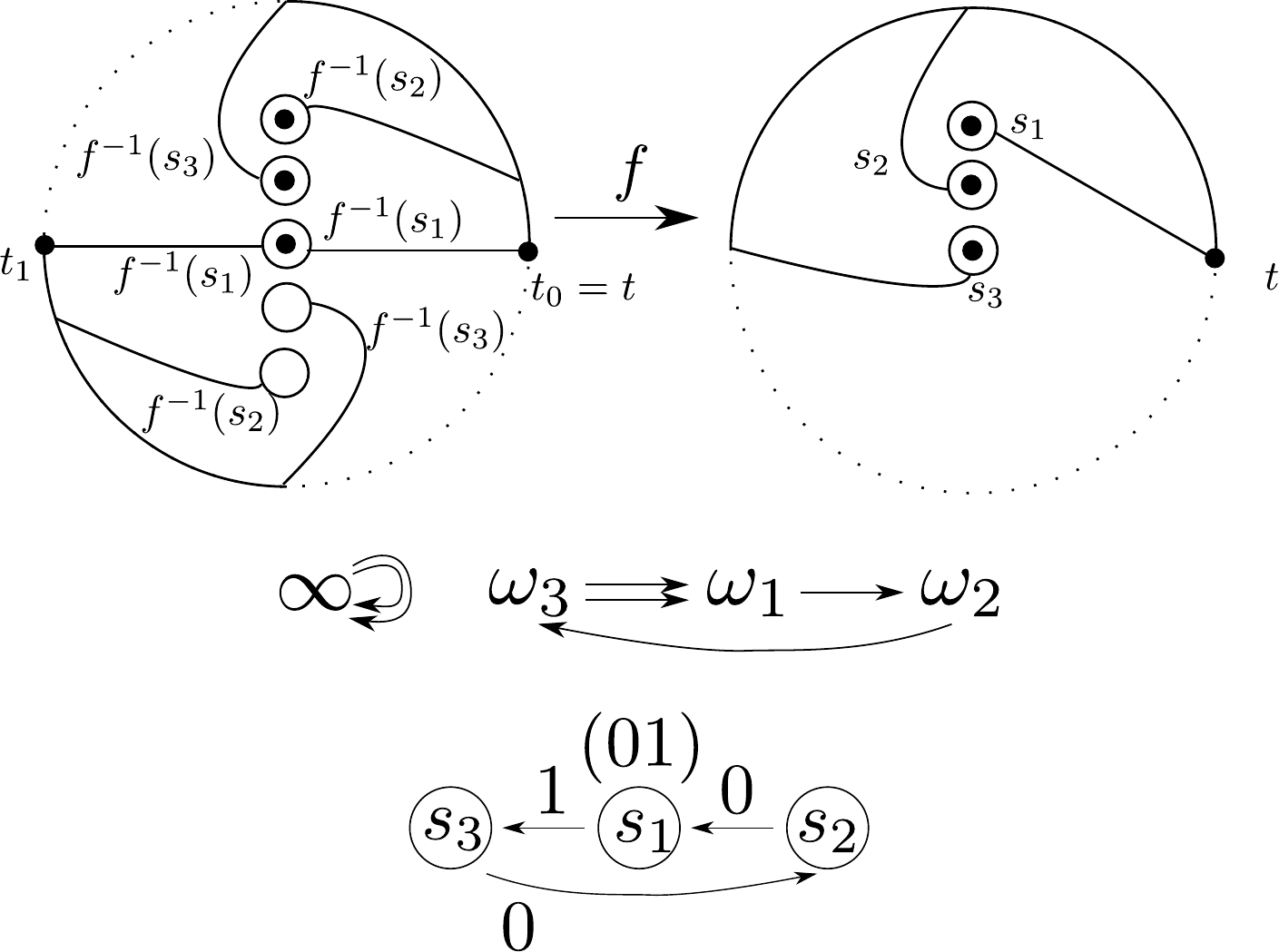}
\caption{The topological polynomial $f$, its mapping scheme $S(f)$, and its kneading automaton $\K(f)$}
\label{fig:f}
\end{figure}

\begin{figure}[h]
\includegraphics[width=5in]{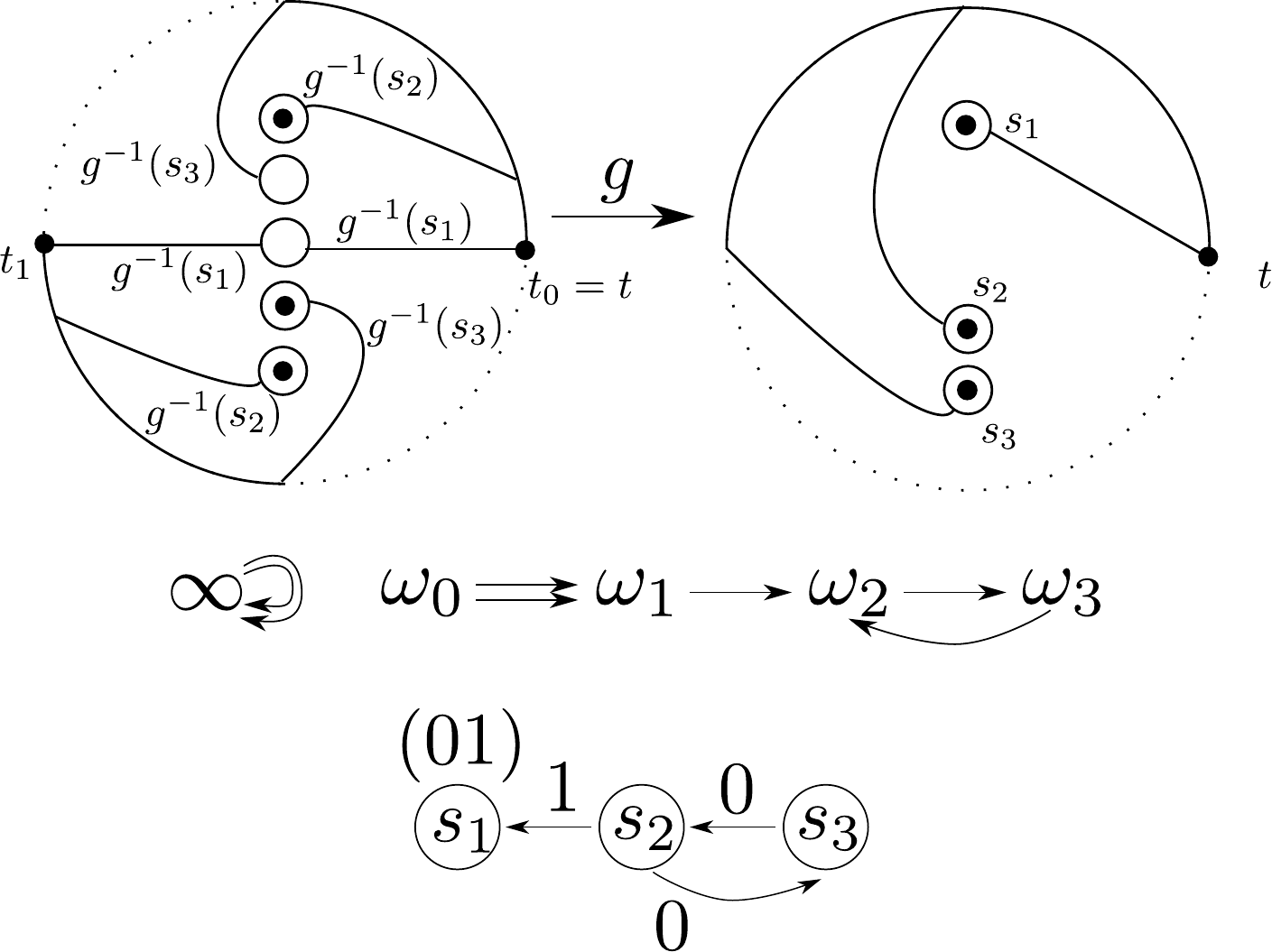}
\caption{The topological polynomial $g$, its mapping scheme $S(g)$, and its kneading automaton $\K(g)$}
\label{fig:allg}
\end{figure}

\section{The quadratic example}
\label{sec:quad}

We will now describe the bimodule $\mathfrak{G}_f$ for $\deg(f) = 2$ and $f$ preperiodic with preperiod $k \geq 1$ and period $n \geq 2$. In other words, $f$ will be a topological polynomial realizing the mapping scheme in Figure \ref{fig:quadmapping scheme}.

\begin{figure}[h]
\includegraphics[width=5in]{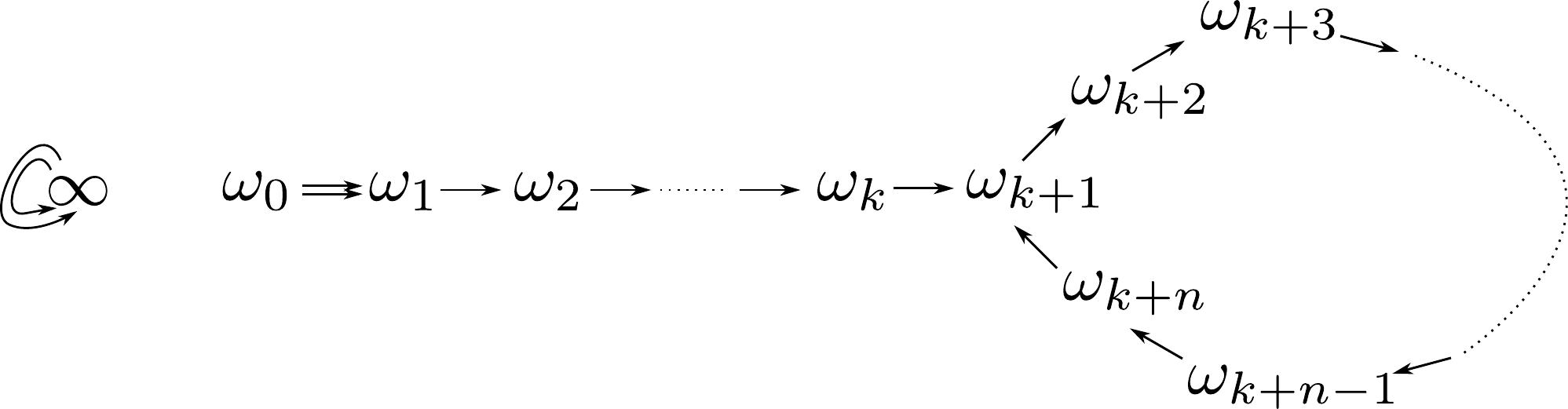}
\caption{The mapping scheme of a preperiodic quadratic topological polynomial}
\label{fig:quadmapping scheme}
\end{figure}

For $x_1x_2 ... x_k ... x_{k+n} \in \{0,1\}^{k+n}$ such that $x_k = \overline{x_{k+n}}$, define $\M_{x_1x_2 ... x_k, x_{k+1} ... x_{k+n}}$ to be the $F_{k+n}$-bimodule generated by the automaton $\K_{x_1x_2 ... x_k, x_{k+1} ... x_{k+n}}$ with states $\{\1, s_1, ... , s_{k+n}\}$ and alphabet $\{0,1\}$ defined by:

\begin{itemize}
\item $s_1 = (01)$
\item $s_{k+1} = (s_{k+n}, s_k)$ if $x_k = 1$ and $x_{k+n} = 0$, and $s_{k+1} = (s_k, s_{k+n})$ if $x_k = 0$ and $x_{k+n} = 1$
\item for all $1 \leq i  < k+n, i \neq k+1$, $s_{i+1} = (s_i, \1)$ if $x_i = 0$, and $s_{i+1} = (\1, s_i)$ if $x_i = 1$.
\end{itemize}

See Figure \ref{fig:quadkneading} for the abbreviated Moore diagram of this automaton.

\begin{figure}[h]
\includegraphics[width=5in]{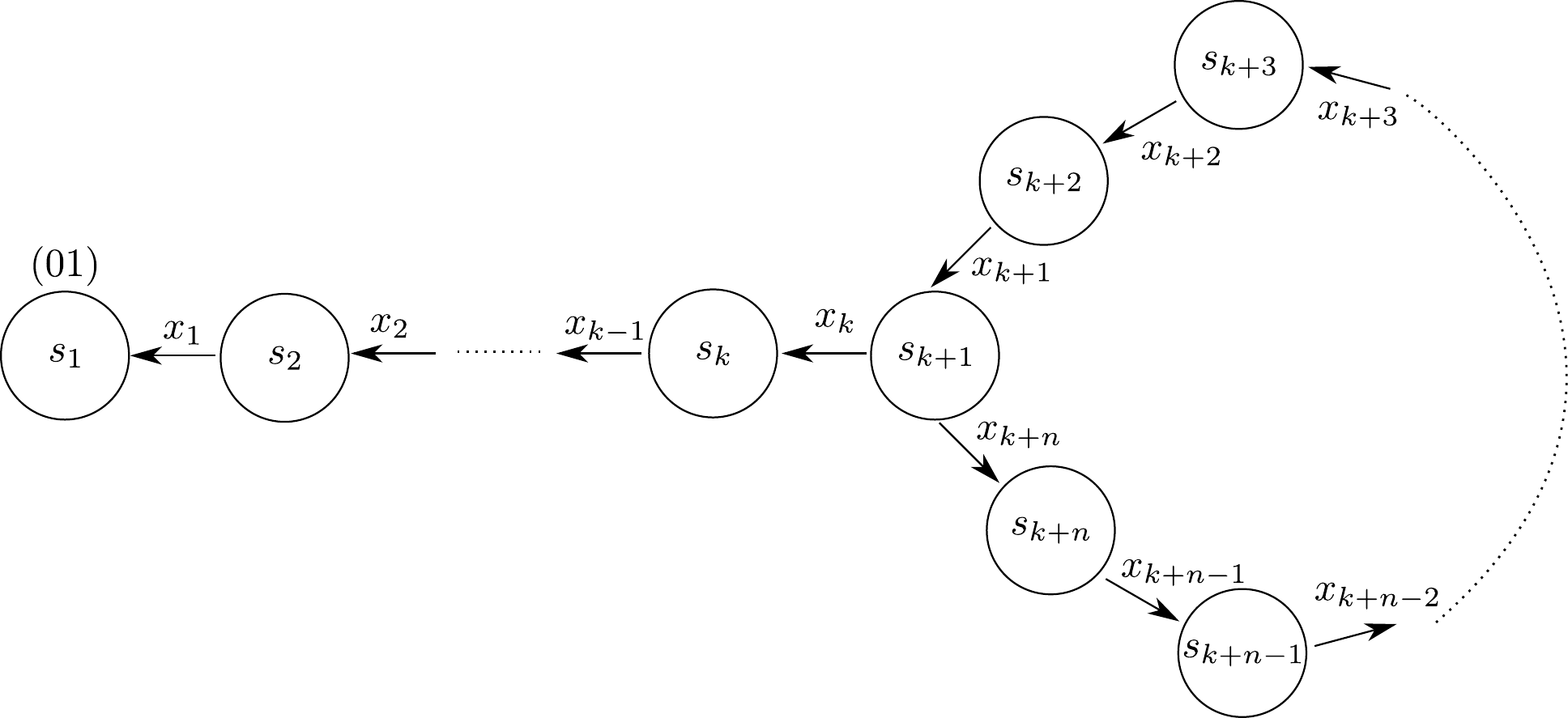}
\caption{Moore diagram of the automaton $\K_{x_1x_2 ... x_k, x_{k+1} ... x_{k+n}}$}
\label{fig:quadkneading}
\end{figure}

The automata $\K_{x_1x_2 ... x_k, x_{k+1} ... x_{k+n}}$ with $x_k = \overline{x_{k+n}}$ are precisely the preperiodic binary kneading automata with preperiod $k \geq 1$ and period $n \geq 2$ \cite{Nek05}.

Notice that $\M_{x_1x_2 ... x_k, x_{k+1} ... x_{k+n}} \simeq \M_{\overline{x_1}\overline{x_2} ... \overline{x_k}, \overline{x_{k+1}} ... \overline{x_{k+n}}}$ by the map that switches 0 and 1, so we will assume $x_k = 1, x_{k+n} = 0$ (i.e. that $s_{k+1} = (s_{k+n}, s_k)$).

We define the $P\Sigma O_{k+n}$-bimodule $\G_{k,n}$ to be the set of all $F_{k+n}$-bimodules of the form $[\alpha] \otimes \M_{x_1 ... x_{k-1}1, x_{k+1} ... x_{k+n-1} 0} \otimes [\beta]$ for $x_1x_2 ... x_{k-1}1,x_{k+1} ... x_{k+n-1} 0 \in \{0,1\}^{k+n}$ and $\alpha,\beta \in P\Sigma O_{k+n}$, with the natural left and right actions. We can easily compute the action of the generators $a_{i,j}$ on the kneading bimodules as below.

\begin{proposition}
\label{thm:comp}
For $1 \leq i, j \leq k+n$, we have the following actions of the bimodules defined by $a_{i, j} \in P\Sigma O_{k+n}$ on the bimodules $\M_{x_1x_2 ... x_{k-1} 1, x_{k+1} ... x_{k+n-1} 0}$, and by extension on the elements of $\mathfrak{G}_{k,n}$:

\begin{align*} [a_{k+1, 1}] \otimes \M_{x_1x_2 ... x_{k-1} 1, x_{k+1} ... x_{k+n-1} 0} &= \M_{x_1x_2 ... x_{k-1} 0, x_{k+1} ... x_{k+n-1} 1} \simeq \\ &\simeq \M_{\overline{x_1}\overline{x_2} ... \overline{x_{k-1}} 1, \overline{x_{k+1}} ... \overline{x_{k+n-1}} 0} \end{align*}

\begin{gather*}[a_{1, k+1}] \otimes \M_{x_1x_2 ... x_{k-1} 1, x_{k+1} ... x_{k+n-1} 0} = \hspace{3in} \\  = \M_{x_1x_2 ... x_{k-1} 1, x_{k+1} ... x_{k+n-1} 0} \otimes \left[\prod_{x_{i-1} = 0} a^{-1}_{i-1, k+n} \prod_{x_{i-1} = 1} a^{-1}_{i-1, k}\right] = \\ = \M_{x_1x_2 ... x_{k-1} 1, x_{k+1} ... x_{k+n-1} 0} \otimes \left[\prod_{x_{i-1} = 1} a_{i-1, k+n} \prod_{x_{i-1} = 0} a_{i-1, k}\right] \ \ \  \end{gather*}

For $i \neq k+1$, \begin{gather*}[a_{i, 1}] \otimes \M_{x_1x_2 ... x_{i-2}x_{i-1}x_i ... x_{k+n-1} 0} = \M_{x_1x_2 ... x_{i-2}\overline{x_{i-1}}x_i ... x_{k+n-1} 0} \end{gather*}

For $j \neq k+1$, \begin{gather*}[a_{1, j}] \otimes \M_{x_1x_2 ... x_{k-1} 1, x_{k+1} ... x_{k+n-1} 0} = \M_{x_1x_2 ... x_{k-1} 1, x_{k+1} ... x_{k+n-1} 0} \otimes \left[\prod_{x_{i-1} = \overline{x_{j-1}}} a_{i-1, j-1}\right] \end{gather*}

For $i \neq 1$, \begin{align*}[a_{i, k+1}] \otimes \M_{x_1x_2 ... x_{i-2}0x_i ... x_{k+n-1} 0} &= \M_{x_1x_2 ... x_{i-2}0x_i ... x_{k+n-1} 0} \otimes [a_{i-1, k+n}] \\ [a_{i, k+1}] \otimes \M_{x_1x_2 ... x_{i-2}1x_i ... x_{k+n-1} 0} &= \M_{x_1x_2 ... x_{i-2}1x_i ... x_{k+n-1} 0} \otimes [a_{i-1, k}]\end{align*}

For $j \neq 1$, \begin{align*}[a_{k+1, j}] \otimes \M_{x_1x_2 ... x_{j-2}0x_j ... x_{k+n-1} 0} &= \M_{x_1x_2 ... x_{j-2}0x_j ... x_{k+n-1} 0} \otimes [a_{k+n, j-1}] \\ [a_{k+1, j}] \otimes \M_{x_1x_2 ... x_{j-2}1x_j ... x_{k+n-1} 0} &= \M_{x_1x_2 ... x_{j-2}1x_j ... x_{k+n-1} 0} \otimes [a_{k, j-1}]\end{align*}

For $i, j \neq 1, k+1$, \begin{align*}[a_{i, j}] \otimes \M_{x_1x_2 ... x_{i-2}\overline{x_{j-1}}x_i ... x_{k+n-1} 0} &= \M_{x_1x_2 ... x_{i-2}\overline{x_{j-1}}x_i ... x_{k+n-1} 0} \\ [a_{i, j}] \otimes \M_{x_1x_2 ... x_{i-2}x_{j-1}x_i ... x_{k+n-1} 0} &= \M_{x_1x_2 ... x_{i-2}x_{j-1}x_i ... x_{k+n-1} 0} \otimes [a_{i-1, j-1}]\end{align*}

\end{proposition}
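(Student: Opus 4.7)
The plan is to verify each formula by unfolding the definition of tensor product on the left side: for any $\alpha\in \mathrm{Aut}(F_{k+n})$, the bimodule $[\alpha]\otimes \M$ has the same underlying set and right action as $\M$ but a new left action given by $h\cdot_{\mathrm{new}} x = \alpha(h)\cdot_{\mathrm{old}} x$. At the level of wreath recursions, tensoring on the left by $[a_{i,j}]$ thus amounts to replacing each occurrence of $s_i$ in the recursion by $s_j^{-1}s_is_j$, leaving the permutation part alone. To match the result back to something of the form $\M_{\vec{x}'}\otimes[\phi]$, one then pushes the inserted factors of $s_j^{\pm 1}$ through the wreath product using $(s_j^{-1})(y_0,y_1)(s_j) = (s_j^{-1}y_0s_j, s_j^{-1}y_1s_j)$ if $s_j$ is inactive, and with the coordinates swapped if $s_j = (01)$; any leftover conjugation that cannot be absorbed into a change of the kneading recursion becomes the right-sided twist $[\prod a_{\cdot,\cdot}]$.

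First I would dispose of the two cases where the active letter $s_1$ is involved directly. For $[a_{k+1,1}]$, conjugating $s_{k+1}=(s_{k+n},s_k)$ by $s_1=(01)$ swaps its two restrictions to give $(s_k,s_{k+n})$, and this is exactly the kneading automaton with $x_k$ and $x_{k+n}$ interchanged; the stated isomorphism with $\M_{\overline{x_1}\cdots \overline{x_{k-1}}1,\overline{x_{k+1}}\cdots \overline{x_{k+n-1}}0}$ is then the $0\leftrightarrow 1$ symmetry already noted in the text. For $[a_{1,k+1}]$, the substitution $s_1 \mapsto s_{k+1}^{-1}s_1s_{k+1}$ introduces factors of $(s_{k+n},s_k)^{\pm 1}$; in each coordinate these factors conjugate the successor restriction $s_{\ell-1}$, and by following whether $x_{\ell-1}=0$ or $1$ one reads off precisely the claimed product $\prod a_{\ell-1,k+n}^{-1}\prod a_{\ell-1,k}^{-1}$, which by the inner-product relation $\prod_i a_{i,j_0}\in \mathrm{Inn}(F_{k+n})$ equals the positive product appearing on the third line.

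Next I would treat the boundary cases $\{i=1,\ j\neq k+1\}$, $\{j=1,\ i\neq k+1\}$, $\{i=k+1,\ j\neq 1\}$, and $\{j=k+1,\ i\neq 1\}$. In each case the substitution is a conjugation by an inactive state living in a single coordinate determined by the bit $x_{j-1}$ (or, for the $k+1$ cases, by $x_{i-1}$ respectively $x_{j-1}$, since that bit selects whether the corresponding restriction of $s_{k+1}$ is $s_k$ or $s_{k+n}$). The resulting single-coordinate conjugation is absorbed into a twist by one $a_{\cdot,\cdot}$, matching the formulas. The generic case $i,j\neq 1,k+1$ is the cleanest: conjugating $s_i$ by an inactive $s_j$ yields a twist by $a_{i-1,j-1}$ in the coordinate containing the unique nontrivial restriction of $s_i$, which exists exactly when $x_{i-1}=x_{j-1}$, and otherwise does nothing; this produces the two subcases verbatim.

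The only real obstacle is bookkeeping: making sure the dichotomy between ``restriction lives in the $0$ coordinate'' and ``restriction lives in the $1$ coordinate'' is consistently tied to the bit $x_{i-1}$, that the conjugation by $s_{k+1}$ correctly picks $s_k$ versus $s_{k+n}$ according to the $x$ profile, and that the resulting twist indices are shifted by one (since the state $s_{\ell+1}$ carries information about position $\ell$). Once these conventions are pinned down at the start of the proof, each of the seven bullet formulas reduces to a one-line wreath-recursion computation verified on the generating states $s_1,\dots,s_{k+n}$.
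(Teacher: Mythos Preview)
Your proposal is correct and follows essentially the same approach as the paper. The paper's proof consists of the single sentence ``This follows from direct computation'' together with two worked examples (the cases $[a_{k+1,1}]$ and $[a_{i,k+1}]$ for $i\neq 1$), carried out exactly as you describe: replace the recursion for $s_i$ by that of $s_j^{-1}s_is_j$, simplify using the wreath product rule, and read off the resulting kneading sequence and twist. Your write-up is somewhat more systematic in its case breakdown and in explaining how the basis change propagates the conjugation to the other states' restrictions (which is implicit in the paper), but the method is identical.
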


\begin{proof}
This follows from direct computation.

For example, the first computation is for $[a_{k+1, 1}] \otimes \M_{x_1x_2 ... x_{k-1} 1, x_{k+1} ... x_{k+n-1} 0}$. Notice that in the kneading automaton generating $\M_{x_1x_2 ... x_{k-1} 1, x_{k+1} ... x_{k+n-1} 0}$ we have $s_1 = (01), s_{k+1} = (s_{k+n}, s_k)$. So $s_{k+1}^{s_1} = (01) \cdot (s_{k+n}, s_k) \cdot (01) = (s_{k}, s_{k+n})$. So the resulting bimodule is $\M_{x_1x_2 ... x_{k-1} 0, x_{k+1} ... x_{k+n-1} 1}$, since the coordinates of the restrictions of $s_k$ have switched. As we noted above, this bimodule is isomorphic to $\M_{\overline{x_1}\overline{x_2} ... \overline{x_{k-1}} 1, \overline{x_{k+1}} ... \overline{x_{k+n-1}} 0}$.

Another example: $[a_{i, k+1}] \otimes \M_{x_1x_2 ... x_{i-2}0x_i ... x_{k+n-1} 0}$ (for $i \neq 1$). Here we have $s_{i} = (s_{i-1}, \1)$ and $ s_{k+1} = (s_{k+n}, s_k)$. So $s_i^{s_{k+1}} = (s_{k+n}^{-1}, s_k^{-1}) \cdot (s_{i-1}, \1) \cdot (s_{k+n}, s_k) = (s_{i-1}^{s_{k+n}}, \1)$. Since none of the coordinates have changed, the kneading sequence remains unaltered, but we have replaced $s_{i-1}$ with $s_{i-1}^{s_{k+n}}$, so we gain the twist $a_{i-1, k+n}$. Thus, our resulting bimodule is $\M_{x_1x_2 ... x_{i-2}0x_i ... x_{k+n-1} 0} \otimes [a_{i-1, k+n}]$.

\end{proof}

Fix a preperiodic quadratic topological polynomial $f$ with preperiod length $k \geq 1$ and period length $n \geq 2$, and recall that the $P\Sigma O_{k+n}$-bimodule $\mathfrak{G}_f$ is the set  of all $F_{k+n}$-bimodules of the form $[\alpha] \otimes \M_f \otimes [\beta]$ where $\alpha, \beta \in P\Sigma O_{k+n}$ with the natural $P\Sigma O_{k+n}$ right and left actions.

\begin{proposition}

$\G_{k,n} = \G_f$.

\end{proposition}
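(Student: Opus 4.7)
The plan is to prove both containments $\G_f \subseteq \G_{k,n}$ and $\G_{k,n} \subseteq \G_f$ separately, using Theorem \ref{thm:Nek1} to express $\M_f$ as a twisted kneading bimodule and Proposition \ref{thm:comp} to move among the various kneading bimodules in our family.

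First, apply Theorem \ref{thm:Nek1} (together with the discussion immediately after it) to write $\M_f \simeq \M_{\K(f)} \otimes [\phi]$ for some $\phi \in P\Sigma O_{k+n}$. Since $f$ realizes the quadratic preperiodic mapping scheme with preperiod $k \geq 1$ and period $n \geq 2$, and the unlabeled Moore diagram of $\K(f)$ is determined by this mapping scheme, $\K(f)$ must be one of the preperiodic binary kneading automata $\K_{y_1 \cdots y_{k-1} 1,\, y_{k+1} \cdots y_{k+n-1} 0}$ (possibly after applying the $0 \leftrightarrow 1$ symmetry to normalize $x_k = 1$, $x_{k+n} = 0$). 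The containment $\G_f \subseteq \G_{k,n}$ then follows immediately by absorbing $\phi$ into the right factor: $[\alpha] \otimes \M_f \otimes [\beta] = [\alpha] \otimes \M_{\K(f)} \otimes [\phi\beta]$.

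For the reverse containment $\G_{k,n} \subseteq \G_f$, the main task is to show that all the kneading bimodules $\M_{x_1 \cdots x_{k-1} 1,\, x_{k+1} \cdots x_{k+n-1} 0}$ in our family lie in a single left $P\Sigma O_{k+n}$-orbit. By Proposition \ref{thm:comp}, for every $i \in \{2, \ldots, k+n\} \setminus \{k+1\}$ the generator $[a_{i,1}]$ acts on the left by flipping the coordinate $x_{i-1}$. These operations reach exactly the free coordinates $x_1, \ldots, x_{k-1}, x_{k+1}, \ldots, x_{k+n-1}$ while preserving the normalization $x_k = 1$, $x_{k+n} = 0$. Composing such left actions therefore produces, for every allowable sequence $x$, an element $\omega = \omega(x) \in P\Sigma O_{k+n}$ with $[\omega] \otimes \M_x \simeq \M_{\K(f)}$, equivalently $\M_x \simeq [\omega^{-1}] \otimes \M_{\K(f)}$. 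Hence for any $\alpha, \beta \in P\Sigma O_{k+n}$,
\[
[\alpha] \otimes \M_x \otimes [\beta] \;=\; [\alpha\omega^{-1}] \otimes \M_{\K(f)} \otimes [\beta] \;=\; [\alpha\omega^{-1}] \otimes \M_f \otimes [\phi^{-1}\beta] \;\in\; \G_f.
\]

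The only potential obstacle is bookkeeping: one must check carefully that the flips described above are the \emph{only} effect of the generators $[a_{i,1}]$ on the kneading data (so they produce no extra twist on the right), and that iterating them is transitive on the $2^{k+n-2}$ allowable sequences. Both facts fall out directly from Proposition \ref{thm:comp}, so the proof reduces to a short combinatorial verification together with the two absorptions of $[\phi]$ and $[\omega^{\pm 1}]$ displayed above.
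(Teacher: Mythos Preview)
Your proof is correct and follows essentially the same approach as the paper's. The paper compresses your two containments into a single sentence (``By Proposition \ref{thm:comp}, the left action of $P\Sigma O_{k+n}$ is transitive on the basis of kneading bimodules''), whereas you spell out explicitly which generators $a_{i,1}$ effect the coordinate flips and verify they introduce no right twist---but this is exactly the content of the relevant line of Proposition \ref{thm:comp}, so the arguments coincide.
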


\begin{proof}

By Theorem \ref{thm:Nek1}, $\M_f$ is isomorphic to $\M_{x_1 ... x_{k-1}1, x_{k+1} ... x_{k+n-1} 0} \otimes [\phi]$ for some \\ $x_1x_2 ... x_{k-1}1,x_{k+1} ... x_{k+n-1} 0 \in \{0,1\}^{k+n}$ and some $\phi \in P\Sigma O_{k+n}$. By Proposition \ref{thm:comp}, the left action of $P\Sigma O_{k+n}$ is transitive on the basis of kneading bimodules. Thus, the two bimodules are equal.

\end{proof}

Now for $1 \leq i \leq n$, define $\gamma_i \in P\Sigma O_{k+n}$ by: \begin{gather*} \gamma_i = a_{k+i, k+i-1}a_{k+i, k+i-2} ... a_{k+i, k+1}a_{k+i, k+n}a_{k+i, k+n-1} ... a_{k+i, k+i+1}.\end{gather*}

So for $n=2$, we would have: \begin{gather*} \gamma_1 = a_{k+1, k+2} \\ \gamma_2 = a_{k+2, k+1}\end{gather*}

For $n=3$, we would have: \begin{gather*} \gamma_1 = a_{k+1, k+3}a_{k+1, k+2} \\ \gamma_2 = a_{k+2, k+1}a_{k+2, k+3} \\ \gamma_3 = a_{k+3, k+2}a_{k+3, k+1} \end{gather*}

For $n = 4$: \begin{gather*} \gamma_1 = a_{k+1, k+4}a_{k+1,k+3}a_{k+1,k+2} \\ \gamma_2 = a_{k+2,k+1}a_{k+2,k+4}a_{k+2,k+3} \\ \gamma_3 = a_{k+3, k+2}a_{k+3,k+1}a_{k+3,k+4} \\ \gamma_4 = a_{k+4, k+3}a_{k+4,k+2}a_{k+4,k+1}\end{gather*}

Notice that the product $\gamma = \gamma_n\gamma_{n-1} ... \gamma_2\gamma_1$ acts on $S^2 \setminus P_f$ by a Dehn Twist about a simple closed curve $\Gamma$ that separates the points $\{\omega_{k+1}, ... , \omega_{k+n}\}$ from the rest of $P_f$. That is, $\Gamma$ separates the period of the post-critical set from the preperiod and $\infty$. Since $k \geq 1$ and $n \geq 2$, the curve $\Gamma$ is non-peripheral.

\begin{lemma}\label{quadcrit}

For $g$ a pre-periodic quadratic topological polynomial with preperiod length $k \geq 1$ and period length $n \geq 2$ and $\gamma \in P\Sigma O_{k+n}$ acting by a Dehn twist $T_\Gamma$ about a simple closed curve $\Gamma$ that separates the period $\{\omega_{k+1}, ... , \omega_{k+n}\}$ from the rest of the post-critical set, if $[\gamma] \otimes \M_g \simeq \M_g \otimes [\gamma]$, then $\Gamma$ is a $g$-stable Levy cycle of length 1, and $g$ is not equivalent to any complex polynomial.

\end{lemma}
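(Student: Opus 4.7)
The plan is to translate the algebraic hypothesis $[\gamma]\otimes\M_g\simeq\M_g\otimes[\gamma]$ into a topological statement about how the Dehn twist $T_\Gamma$ lifts through $g$, and then to extract the Levy cycle from that lift. By the functorial correspondence underlying the Bartholdi--Nekrashevych framework (Theorem~\ref{thm:Nek1}), together with the identification of Dehn twists in $P\Sigma O_{k+n}$ as elements of $PMod(S^2\setminus P_g)$ established via Lemma~\ref{lem:twist} and the discussion following it, this hypothesis is equivalent to $T_\Gamma\circ g$ being Thurston equivalent to $g\circ T_\Gamma$. Equivalently, the lift $\widetilde{T_\Gamma}$ characterized by $g\circ\widetilde{T_\Gamma}=T_\Gamma\circ g$ represents the same class as $T_\Gamma$ in $PMod(S^2\setminus P_g)$.

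Next I would compute this lift topologically. The curve $\Gamma$ separates $S^2$ into disks $D_1$ (containing the period $\{\omega_{k+1},\ldots,\omega_{k+n}\}$) and $D_2$ (containing the preperiod together with $\omega_1$ and $\infty$). Since both critical values of the degree-$2$ map $g$ lie in $D_2$, the restriction $g\colon g^{-1}(D_1)\to D_1$ is an unbranched degree-$2$ covering of a disk, so $g^{-1}(D_1)$ is a disjoint union of two topological disks $U_1,U_2$, each mapped homeomorphically onto $D_1$. Therefore $g^{-1}(\Gamma)=C_1\sqcup C_2$ with $C_i=\partial U_i$ each of degree $1$ over $\Gamma$, and the lift satisfies $\widetilde{T_\Gamma}=T_{C_1}T_{C_2}$ in $PMod(S^2\setminus P_g)$.

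The relation $T_{C_1}T_{C_2}=T_\Gamma$ then forces the Levy cycle structure. The disjoint curves $C_1,C_2$ cobound an annulus in $S^2$ and so partition $P_g$ into three regions; counting the post-critical points in each of $U_1,U_2$ and the annulus (using the preimage structure of the mapping scheme) shows that the only way such a relation can hold in $PMod(S^2\setminus P_g)$ is for one of $C_1,C_2$ to bound a disk containing precisely the period (hence to be isotopic to $\Gamma$) while the other bounds a disk containing a single post-critical point (hence to be peripheral, with trivial Dehn twist). Combined with the already-established degree-$1$ restriction, this is exactly the defining condition for $\{\Gamma\}$ to be a $g$-stable Levy cycle of length~$1$; since $\lambda(g_{\{\Gamma\}})=1$ this cycle is an obstruction, and by the polynomial form of Theorem~\ref{thm:thurston}, $g$ is not equivalent to any complex polynomial.

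The principal obstacle will be the first step: making the correspondence between bimodule isomorphisms and Thurston equivalences of pre/post-composed maps fully rigorous, in particular tracking the identification of elements of $P\Sigma O_{k+n}$ with mapping classes and distinguishing isotopies rel $P_g$ from those rel $g^{-1}(P_g)$. A secondary difficulty is the mapping-class-group assertion in the third step, which rules out nontrivial relations of the form $T_aT_b=T_c$ among Dehn twists about three pairwise non-homotopic non-peripheral simple closed curves; this can be handled via the explicit analysis of the partition of $P_g$ by the configuration $\{C_1,C_2,\Gamma\}$ outlined above.
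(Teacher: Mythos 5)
Your proposal takes essentially the same approach as the paper's proof: translate the bimodule isomorphism into $T_\Gamma \circ g$ homotopic to $g \circ T_\Gamma$ rel $P_g$, observe that the two degree-$1$ lifts of $\Gamma$ give $\widetilde{T_\Gamma} = T_{C_1}T_{C_2}$, deduce that one preimage is isotopic to $\Gamma$ and the other is peripheral, and conclude via the mapping-scheme combinatorics. The paper's own proof is far terser (it simply asserts that one preimage must be homotopic to $\Gamma$ with degree~$1$ and ``notices'' the other is peripheral about $\omega_k$), so you have filled in the details; these details are correct.

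Two small cautions. First, you write ``Thurston equivalent'' where you need the stronger statement ``homotopic rel $P_g$'' --- your next sentence (same class in $PMod(S^2\setminus P_g)$) corrects this, but the distinction matters and should be stated once and for all: isomorphism of the $\pi_1(S^2\setminus P_g)$-bimodules in the Bartholdi--Nekrashevych framework pins down the map up to homotopy rel $P_g$, not merely up to Thurston equivalence. Second, the claim that the relation $T_{C_1}T_{C_2}=T_\Gamma$ forces one curve to be isotopic to $\Gamma$ and the other to be peripheral does \emph{not} follow from ``counting the post-critical points in each region''; the distribution of $P_g$ among $U_1$, $U_2$, and the annulus is determined by the dynamics, not the other way around, and counting alone does not rule out a configuration where neither $C_i$ is isotopic to $\Gamma$. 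What is actually needed is the standard mapping-class-group fact that a multitwist determines its reduction system, i.e.\ a product of commuting Dehn twists about a collection of non-peripheral, pairwise non-isotopic simple closed curves is a single Dehn twist about $\Gamma$ only when that collection (after discarding peripherals) is exactly $\{\Gamma\}$. You invoke this in your ``secondary difficulty'' paragraph but then suggest it ``can be handled via the explicit analysis of the partition,'' which reverses the logical dependence: the MCG fact is the input, and the partition description of the peripheral component ($U_2 \cap P_g = \{\omega_k\}$) is the output.
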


\begin{proof}

By $[\gamma] \otimes \M_g \simeq \M_g \otimes [\gamma]$ we have $T_\Gamma \circ g$ homotopic to $g \circ T_\Gamma$, so $g(\Gamma)$ must be homotopic to $\Gamma$ and $g$ must map it by degree 1. Notice also that the other component of $g^{-1}(\Gamma)$ is peripheral about the point $\omega_k$. Thus, $\Gamma$ is a $g$-stable Levy cycle of length 1 for $g$.

\end{proof}

Using the computations given in Proposition \ref{thm:comp}, we can verify the following lemma:

\begin{lemma}
\label{thm:gamma}
For any $x_1...x_{k-1} \in \{0,1\}^{k-1}$, we have $$[\gamma_1] \otimes \M_{x_1 ... x_{k-1}1, 00 ... 00} = \M_{x_1 ... x_{k-1}1, 00 ... 00} \otimes [\gamma_{n}],$$ and for $1 \leq i < n$, $$[\gamma_i] \otimes \M_{x_1 ... x_{k-1}1, 00 ... 00} = \M_{x_1 ... x_{k-1}1, 00 ... 00} \otimes [\gamma_{i-1}].$$
\end{lemma}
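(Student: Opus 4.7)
The plan is a direct, bookkeeping-heavy calculation using the formulas of Proposition \ref{thm:comp}. The essential simplification is that in the bimodule $\M=\M_{x_1\ldots x_{k-1}1,\, 00\ldots 00}$, we have $x_{k+\ell}=0$ for every $1\leq \ell\leq n$, so for every index $m$ of the form $k+\ell$ with $2\leq \ell\leq n$ we know that $x_{m-1}=0$. This means the only ``case splits'' in Proposition \ref{thm:comp} that ever occur during the computation of $[\gamma_i]\otimes\M$ fall into the ``$x_{j-1}=0$'' branch, so the underlying kneading bimodule is preserved at every stage and we only have to accumulate twists.

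First I would handle $i>1$. Expanding $\gamma_i = a_{k+i,k+i-1}\cdots a_{k+i,k+1}\, a_{k+i,k+n}\cdots a_{k+i,k+i+1}$, I would apply the factors to $\M$ from right to left. For each factor $a_{k+i,j}$ with $j\in\{k+2,\ldots,k+n\}\setminus\{k+i\}$, rule (7) of Proposition \ref{thm:comp} applies (since $i,j\neq 1,k+1$): because $x_{(k+i)-1}=0=x_{j-1}$, the kneading bimodule is unchanged and we pick up the twist $a_{k+i-1,j-1}$ on the right. For the single factor $a_{k+i,k+1}$, rule (5) applies with the ``$x_{i-1}=0$'' branch, again leaving $\M$ unchanged and producing the twist $a_{k+i-1,k+n}$ on the right. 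Because each new factor $[a_{k+i,j}]$ is applied on the left, its resulting twist multiplies the accumulated twist on the \emph{left}, so reading off the accumulated word from the last factor applied to the first, we obtain
\[
a_{k+i-1,k+i-2}\,a_{k+i-1,k+i-3}\cdots a_{k+i-1,k+1}\,a_{k+i-1,k+n}\,a_{k+i-1,k+n-1}\cdots a_{k+i-1,k+i},
\]
which is precisely $\gamma_{i-1}$. Hence $[\gamma_i]\otimes\M = \M\otimes[\gamma_{i-1}]$.

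Next I would handle the wrap-around case $i=1$. Here the ``first half'' of the defining product for $\gamma_1$ collapses (the factor $a_{k+1,k+1}$ is trivial), leaving $\gamma_1 = a_{k+1,k+n}a_{k+1,k+n-1}\cdots a_{k+1,k+2}$. For each factor $a_{k+1,j}$ with $j\in\{k+2,\ldots,k+n\}$, rule (6) applies, and since $x_{j-1}=0$, the kneading bimodule is unchanged and the twist picked up on the right is $a_{k+n,j-1}$. Accumulating exactly as before yields $a_{k+n,k+n-1}a_{k+n,k+n-2}\cdots a_{k+n,k+1}$, which is $\gamma_n$ (the ``second half'' in the definition of $\gamma_n$ is empty). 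Thus $[\gamma_1]\otimes\M = \M\otimes[\gamma_n]$.

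The only real obstacle is careful bookkeeping of the two subscripts and of the direction in which accumulated twists concatenate; the argument is otherwise a mechanical application of Proposition \ref{thm:comp} made uniform by the fact that, on these particular kneading bimodules, every relevant entry $x_{k+\ell-1}$ in the period equals $0$, so every rule reduces to its ``$0$''-branch and no kneading sequence is ever altered.
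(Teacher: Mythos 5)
Your proposal is correct and takes essentially the same approach as the paper, whose entire proof of this lemma is the single sentence ``This follows directly from Proposition~\ref{thm:comp}.'' You have simply carried out that direct computation explicitly: observing that all period entries in the kneading sequence are $0$ so only the uniform ``$0$-branch'' of each rule in Proposition~\ref{thm:comp} is ever invoked, tracking how each generator $a_{p,q}$ with $p,q \in \{k+1,\ldots,k+n\}$ passes through $\M$ to become $a_{p-1,q-1}$ (with $k+1 \mapsto k+n$), and confirming the accumulated twist is exactly $\gamma_{i-1}$ (resp.\ $\gamma_n$); you also correctly read the index range in the statement as $i>1$, matching how the lemma is applied in Theorem~\ref{thm:quadraticpreperiodic}.
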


\begin{proof}
This follows directly from Proposition \ref{thm:comp}.
\end{proof}

We are now ready to state our first result.

\begin{theorem}
\label{thm:quadraticpreperiodic}
For $f$ a pre-periodic topological polynomial of degree 2 with preperiod length $k \geq 1$ and period length $n \geq 2$, then there exists a topological polynomial $g$ which has the same mapping scheme as $f$, but is not equivalent to any complex polynomial.
\end{theorem}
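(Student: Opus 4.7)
By Lemma \ref{quadcrit}, it suffices to produce a topological polynomial $g$ realizing the same mapping scheme as $f$ for which the bimodules $[\gamma]\otimes\M_g$ and $\M_g\otimes[\gamma]$ are isomorphic, where $\gamma=\gamma_n\gamma_{n-1}\cdots\gamma_1$ acts as a Dehn twist $T_\Gamma$ about a simple closed curve separating the periodic post-critical points $\{\omega_{k+1},\ldots,\omega_{k+n}\}$ from the rest of $P_f$. Because there is, up to isomorphism, exactly one polynomial mapping scheme of degree two with preperiod $k$ and period $n$, any topological polynomial realizing such a scheme automatically has the same mapping scheme as $f$, so the only remaining issue is the existence of the required obstruction.

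The first step is to choose the model. I would take $g$ to be a topological polynomial whose kneading bimodule has the form $\M_g=\M_{x_1\cdots x_{k-1}1,\,00\cdots 0}$ for some (any) $x_1,\ldots,x_{k-1}\in\{0,1\}$, with trivial twist. Proposition \ref{thm:Nek2} guarantees that such a $g$ exists, and this particular shape of kneading sequence is precisely the hypothesis of Lemma \ref{thm:gamma}.

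Next I would slide the factors $\gamma_i$ across $\M_g$ one at a time using Lemma \ref{thm:gamma} and associativity of tensor products. Starting from
\[
[\gamma]\otimes\M_g \;=\; [\gamma_n]\otimes[\gamma_{n-1}]\otimes\cdots\otimes[\gamma_1]\otimes\M_g,
\]
the lemma first replaces $[\gamma_1]\otimes\M_g$ with $\M_g\otimes[\gamma_n]$; the next application replaces $[\gamma_2]\otimes\M_g$ with $\M_g\otimes[\gamma_1]$; and so on. After $n$ iterations one obtains
\[
[\gamma]\otimes\M_g \;\simeq\; \M_g\otimes[\gamma_{n-1}\gamma_{n-2}\cdots\gamma_1\gamma_n].
\]
Write $\gamma'=\gamma_{n-1}\gamma_{n-2}\cdots\gamma_1\gamma_n$; then with $A=\gamma_{n-1}\cdots\gamma_1$ one has $\gamma=\gamma_n A$ and $\gamma'=A\gamma_n$, so $\gamma'$ is obtained from $\gamma$ by cyclically rotating one factor from the left end to the right end.

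The main obstacle is to identify $\gamma$ and $\gamma'$ as elements of $P\Sigma O_{k+n}$, so that the resulting bimodules agree and Lemma \ref{quadcrit} applies. Geometrically this is believable: each $\gamma_{i}$ represents dragging the puncture $\omega_{k+i}$ once around the other periodic punctures, and whether the factor $\gamma_n$ is performed first or last makes no difference to the overall mapping class, which in either case is the Dehn twist about the curve enclosing precisely $\{\omega_{k+1},\ldots,\omega_{k+n}\}$. Since the pure mapping class group of $S^2\setminus P_f$ embeds in $P\Sigma O_{k+n}$, equality in the mapping class group forces $\gamma=\gamma'$ in $P\Sigma O_{k+n}$, hence $[\gamma]\simeq[\gamma']$ as bimodules. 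Combining this with the sliding computation yields $[\gamma]\otimes\M_g\simeq\M_g\otimes[\gamma]$, and Lemma \ref{quadcrit} then identifies $\Gamma$ as a $g$-stable Levy cycle of length one, so $g$ cannot be equivalent to any complex polynomial.
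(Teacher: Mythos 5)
The structure of your argument is right: produce $g$, establish $[\gamma]\otimes\M_g \simeq \M_g\otimes[\gamma]$, and invoke Lemma~\ref{quadcrit}. Your application of Lemma~\ref{thm:gamma} to obtain $[\gamma]\otimes\M_0 \simeq \M_0\otimes[\gamma_{n-1}\gamma_{n-2}\cdots\gamma_1\gamma_n]$ is also correct. However, the step where you identify $\gamma = \gamma_n\gamma_{n-1}\cdots\gamma_1$ with its cyclic rotation $\gamma' = \gamma_{n-1}\cdots\gamma_1\gamma_n$ in $P\Sigma O_{k+n}$ is false, and this is the crux of the matter. Your geometric justification — that $\gamma_n$ can be slid from one end of the word to the other because both products represent the same Dehn twist — implicitly assumes that $\gamma_n$ is itself a mapping class supported in the disk bounded by $\Gamma$. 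But $\gamma_n$ is \emph{not} a pure mapping class: one checks that $\gamma_n(s_1\cdots s_{k+n}) = s_1\cdots s_k\, s_{k+n}\, s_{k+1}\cdots s_{k+n-1}$, which is not conjugate in $F_{k+n}$ to $s_1\cdots s_{k+n}$ (in the smallest case $k=1$, $n=2$, this says $s_1s_3s_2$ is not conjugate to $s_1s_2s_3$). So $\gamma_n$ fails to preserve the conjugacy class of the peripheral loop at infinity and does not lie in $PMod(S^2\setminus P_f)$. Consequently $\gamma'=\gamma_n^{-1}\gamma\gamma_n$ is a genuinely different element of $P\Sigma O_{k+n}$ from the Dehn twist $\gamma$; a direct computation for $n=2$, $k=1$ confirms that $a_{3,2}a_{2,3}$ and $a_{2,3}a_{3,2}$ disagree even as outer automorphisms.

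The paper's proof circumvents exactly this problem by \emph{not} taking $\M_g = \M_0$ with trivial twist. Instead it sets $\M_g = \M_0\otimes[\gamma_{n-1}\gamma_{n-2}\cdots\gamma_1]$. Then the computation becomes
\begin{align*}
[\gamma]\otimes\M_g &= [\gamma]\otimes\M_0\otimes[\gamma_{n-1}\cdots\gamma_1]
= \M_0\otimes[\gamma_{n-1}\cdots\gamma_1\gamma_n]\otimes[\gamma_{n-1}\cdots\gamma_1]\\
&= \M_0\otimes[\gamma_{n-1}\cdots\gamma_1\cdot\gamma_n\gamma_{n-1}\cdots\gamma_1]
= \M_0\otimes[\gamma_{n-1}\cdots\gamma_1]\otimes[\gamma] = \M_g\otimes[\gamma],
\end{align*}
which requires only the identity $\gamma_n\gamma_{n-1}\cdots\gamma_1 = \gamma$ (true by definition), not the false cyclic-rotation identity. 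In other words, the additional twist by $\gamma_{n-1}\cdots\gamma_1$ is what absorbs the discrepancy between $\gamma$ and $\gamma'$. You would need to adopt this choice of $\M_g$ (or an equivalent trick) to make the argument go through.
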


\begin{proof}
Pick a planar generating set $\{s_1, ... , s_{k+n}\}$. Let $\M_f$ be as defined earlier and let $\M_f \simeq \M_{\K(f)} \otimes [\phi]$ be the twisted kneading bimodule representation of $\M_f$ as in Theorem \ref{thm:Nek1}. We have that $\M_{\K(f)} = \M_{x_1 ... x_{k-1}1, x_{k+1} ... x_{k+n-1} 0}$ for some $x_i \in \{0,1\}$ (in fact, $x_1x_2 ... x_{k-1} 1(x_{k+1} ... x_{k+n-1} 0)$ is the kneading sequence of $f$ in the sense of \cite{BS02}).

By Proposition \ref{thm:comp}, there exists $\alpha \in P\Sigma O_{k+n}$ such that $[\alpha]\otimes \M_{\K(f)} = \M_{x_1 ... x_{k-1}1, 0 ... 0 0}$, where the $x_i$ are as in $\M_{\K(f)}$. Let $\M_0 = \M_{x_1 ... x_{k-1}1, 0 ... 0 0} = [\alpha]\otimes \M_{\K(f)}$.

Define $\beta \in P\Sigma O_{k+n}$ by $\beta = \phi^{-1}\gamma_{n-1}\gamma_{n-2} ... \gamma_1$ where the $\gamma_i$ are as in Lemma \ref{thm:gamma}. Notice that $[\alpha] \otimes \M_f \otimes [\beta] = \M_0 \otimes [\gamma_{n-1}\gamma_{n-2} ... \gamma_1]$.

Recall $\gamma = \gamma_n\gamma_{n-1} ... \gamma_1$. By Lemma \ref{thm:gamma}, we have that $[\gamma] \otimes \M_0 =  \M_0 \otimes [\gamma_{n-1}\gamma_{n-2} ... \gamma_1\gamma_n]$.

Notice that \begin{align*} [\gamma] \otimes \left( [\alpha] \otimes \M_f \otimes [\beta] \right) &= [\gamma] \otimes \M_0 \otimes [\gamma_{n-1}\gamma_{n-2} ... \gamma_1] = \\ &= \M_0 \otimes [\gamma_{n-1}\gamma_{n-2} ... \gamma_1\gamma_n] \otimes [\gamma_{n-1}\gamma_{n-2} ... \gamma_1] = \\ &= \M_0 \otimes [\gamma_{n-1}\gamma_{n-2} ... \gamma_1] \otimes [\gamma] = \\ &= \left( [\alpha] \otimes \M_f \otimes [\beta] \right) \otimes [\gamma].\end{align*}

Let $\M_g$ be the bimodule $[\alpha] \otimes \M_f \otimes [\beta] = \M_0 \otimes [\gamma_{n-1}\gamma_{n-2} ... \gamma_1]$. By Proposition \ref{thm:Nek2}, there exists a topological polynomial $g$ whose associated bimodule is $\M_g$. Notice that $P_g = P_f$ and that the mapping schemes of these two topological polynomials are the same. While the labels of the period are all 0 in the Moore diagram of $\K(g)$, they might not be in the Moore diagram of $\K(f)$.

So we have that $[\gamma] \otimes \M_g = \M_g \otimes [\gamma]$, where $\gamma$ acts on $S_2 \setminus P_g$ by the Dehn twist $T_\Gamma$ about a non-peripheral simple closed curve $\Gamma$ that separates the period of $P_g$ from the rest of $P_g$. By Lemma \ref{quadcrit}, $g$ is not equivalent to any complex polynomial.
\end{proof}

This yields the following corollary, which is also a consequence of Koch's result:

\begin{corollary}
\label{thm:quadcor}
Every quadratic mapping scheme with preperiod length $k \geq 1$ and period length $n \geq 2$ is realized by a topological polynomial that is not equivalent to any complex polynomial.
\end{corollary}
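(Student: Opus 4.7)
The plan is to derive this corollary as a direct consequence of Theorem 6.4 combined with Thom's realization theorem (cited earlier in Section 3). First I would start with an arbitrary quadratic polynomial mapping scheme $S(C,P,\alpha,\nu)$ satisfying the hypotheses, namely preperiod length $k \geq 1$ and period length $n \geq 2$. By Thom's theorem there exists at least one topological polynomial $f$ that realizes $S$. The bijection $\beta : C \cup P \to C_f \cup P_f$ from Definition 3.4 intertwines $\alpha$ with $f$, so the forward dynamics of the finite critical point of $f$ under $f$ has the same preperiod $k$ and period $n$ as in $S$; in particular $f$ is preperiodic in the sense of Definition 2.9.

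Next I would apply Theorem 6.4 to this $f$: since $f$ is a preperiodic quadratic topological polynomial with preperiod $k \geq 1$ and period $n \geq 2$, the theorem produces a topological polynomial $g$ with the same mapping scheme as $f$ that is not equivalent to any complex polynomial. Because mapping-scheme realization is determined up to isomorphism by the abstract data $(C,P,\alpha,\nu)$, the polynomial $g$ also realizes $S$, which is precisely the conclusion sought. There is essentially no obstacle here: the work is all contained in Theorem 6.4, and the only mild subtlety is the bookkeeping needed to check that the preperiod/period of the mapping scheme translate correctly through the bijection $\beta$ into the hypotheses on the topological polynomial used in Theorem 6.4, which is immediate from Definition 3.4.
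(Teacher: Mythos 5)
Your proof is correct and spells out exactly the deduction that the paper leaves implicit when it simply states that Theorem~\ref{thm:quadraticpreperiodic} ``yields the following corollary'': by Thom's realization theorem there exists some $f$ realizing the given scheme, $f$ is then preperiodic quadratic with preperiod $k \geq 1$ and period $n \geq 2$, and Theorem~\ref{thm:quadraticpreperiodic} replaces $f$ by an obstructed $g$ with the same mapping scheme, hence $g$ realizes the scheme. One tiny slip: the theorem you invoke is the one labeled \ref{thm:quadraticpreperiodic}, which is numbered $6.5$ under this paper's shared theorem counter (you wrote $6.4$, which is Lemma~\ref{thm:gamma}), but your intent is unambiguous and the argument is sound.
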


Recall that by Berstein-Levy (and an easy additional observation), every quadratic mapping scheme not meeting the hypotheses of Corollary \ref{thm:quadcor} can only be realized by topological polynomials that \emph{are} equivalent to complex polynomials.

\section{Proof of the main result}
\label{sec:arbitrary}

The proof of Theorem \ref{thm:quadraticpreperiodic} only uses the fact that the topological polynomial is quadratic to produce the basis of kneading bimodules $\M_{x_1 ... x_{k-1}1, x_{k+1} ... x_{k+n-1} 0}$. For arbitrary degree $d \geq 3$, we will not be able to produce an explicit basis in this way. However, we establish a few conditions on the mapping scheme when we can replicate the argument from Section \ref{sec:quad}.

We will show the existence of a kneading bimodule that acts as $\M_0$ for our mapping scheme. In order for the bimodule to play this role, the kneading automaton that generates it needs to have:

\begin{enumerate}

\item the states associated with the post-critical points inside the Levy cycle must restrict to each other all in the same coordinate,

\item any pair of these same states must not share any other coordinates with non-trivial restrictions, and

\item the permutations of these states must not interact with the non-trivial restrictions.

\end{enumerate}

We need (1) so that we can cycle the $a_{i,j}$ and $\gamma_i$ as in Lemma \ref{quadcrit}. We need (2) so that we do not pick up any extra generators while cycling through. Finally, we need (3) to guarantee that the presence of active states does not disturb the two previous properties.

In the case of a period of length at least two which has no critical values (in some sense, a period which is \emph{strongly} not an attractor), the proof follows very similar lines to the quadratic case. For a period containing critical values, there needs to be sufficiently many critical values outside the period so that their associated states can act on the non-trivial restriction coordinates, so that the states associated with the critical values in the period can act trivially on these coordinates. The following proposition precisely defines ``sufficiently many'' for a given period.

\begin{proposition}
\label{thm:omega}

For $S = S(C,P,\alpha, \nu )$ a polynomial mapping scheme of degree $d$ with $\Omega = \{\omega_1, ... , \omega_n\} \subset Z \setminus C$ such that $\alpha(\Omega) = \Omega$ and $$\#(\alpha^{-1}(\Omega) \setminus \Omega ) \leq \sum_{z \in C \setminus \{ \infty\}, \alpha(z) \notin \Omega} (\nu(z) - 1),$$ then there exists a topological polynomial $g$ realizing $S$ such that in the Moore diagram of the kneading automaton $\K(g)$, the states $s_\Omega = \{s_1, ... , s_n\}$ associated with $\Omega$ have all of the following properties:

\begin{enumerate}
\item the arrows from states in $s_\Omega$ to other states in $s_\Omega$ are labeled by 0,
\item the arrows from states in $s_\Omega$ to states not in $s_\Omega$ have pairwise disjoint labels from $\{1, ... , d-1\}$,
\item the sets of labels on which the states in $s_\Omega$ act non-trivially are pairwise disjoint, and
\item for all $1 \leq i, j \leq n, i \neq j$, state $s_i$ acts trivially on any letter labeling an arrow leaving $s_j$.
\end{enumerate}

\end{proposition}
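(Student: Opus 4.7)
The plan is to build an explicit twisted kneading automaton $\K(g)$ that realizes the mapping scheme $S$ with the prescribed label structure on $s_\Omega$, and then to invoke Proposition \ref{thm:Nek2} to produce the topological polynomial $g$.

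Since $\Omega$ is finite and $\alpha(\Omega) = \Omega$, the restriction $\alpha|_\Omega$ is a bijection; write $\omega_{\sigma(i)}$ for the unique $\Omega$-preimage of $\omega_i$. For each $\omega_i \in \Omega$, partition the remaining preimages $\alpha^{-1}(\omega_i) \setminus \{\omega_{\sigma(i)}\}$ as $A_i \sqcup B_i \sqcup D_i$, where $A_i$ consists of critical non-post-critical preimages, $B_i$ of critical post-critical preimages, and $D_i$ of non-critical post-critical preimages. By Lemma \ref{thm:cycles}, the non-trivial cycles of $s_i$ correspond to $B_i \cup A_i$, while the arrow $s_i \to s_{\sigma(i)}$ and each arrow $s_i \to s_z$ for $z \in D_i$ must be labeled by a fixed coordinate of $s_i$; for $z \in B_i$ the arrow $s_i \to s_z$ is labeled by a coordinate of the cycle of length $\nu(z)$. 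My concrete assignment will be: label $s_i \to s_{\sigma(i)}$ by $0$ for every $\omega_i \in \Omega$; for each active $\omega_i \in \Omega$, choose a private block $S_i \subseteq \{1,\ldots,d-1\}$ of size $\sum_{z \in B_i \cup A_i} \nu(z)$ to house the non-trivial cycles of $s_i$, with the $S_i$'s pairwise disjoint; label each $B_i$-arrow by a coordinate of its cycle inside $S_i$; and label the $D_i$-arrows by distinct coordinates chosen from $\{1,\ldots,d-1\} \setminus \bigcup_{i'} S_{i'}$, disjoint across all of $\Omega$. Conditions (1)--(4) then hold on $s_\Omega$ by construction.

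The hypothesis supplies exactly enough room. Splitting the Riemann--Hurwitz sum $\sum_{z \in C}(\nu(z)-1) = 2(d-1)$ by whether $\alpha(z) \in \Omega$ and using $\nu(\infty)-1 = d-1$ with $\alpha(\infty) \notin \Omega$, one obtains $\sum_{z \in C,\,\alpha(z) \in \Omega} \nu(z) = (d-1) - R + |A| + |B|$, where $R$ denotes the right-hand side of the hypothesis. Hence the total number of coordinates in $\{1,\ldots,d-1\}$ consumed is $|D| + \sum_i |S_i| = (d-1) - R + |A| + |B| + |D|$, which fits inside $\{1,\ldots,d-1\}$ precisely because $|A| + |B| + |D| \leq R$ by hypothesis.

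The main obstacle will be extending this partial labeling on $s_\Omega$ to a globally dendroid kneading automaton. The cycle structure of each state $s_j$ with $\omega_j \in P \setminus \Omega$ is prescribed by the mapping scheme (the cycles come from the critical preimages of $\omega_j$), but we retain freedom in choosing which vertices of $X$ belong to which cycle. The Euler characteristic identity $|X| = 1 + \sum_c(\ell_c - 1) = d$ is automatic from Riemann--Hurwitz, but connectivity is delicate: the $\Omega$-cycles decompose their part of the cycle diagram into $|A|+|B|$ disjoint polygons (cycles within each $S_i$ and across the $S_i$'s are all disjoint), while the coordinate $0$, the $D$-labels, and the free coordinates in $\{1,\ldots,d-1\} \setminus (\bigcup_i S_i \cup D\mbox{-labels})$ together form a further $R - |A| - |B| + 1$ isolated vertices, giving $R+1$ components in total. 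The non-$\Omega$ cycles contribute exactly $R$ to $\sum_c(\ell_c - 1)$, which is just the ``gluing budget'' needed to merge the $R+1$ components into a single tree of polygons, provided one arranges each non-$\Omega$ cycle so that its vertices lie in distinct existing components. I expect this placement to be realizable by an inductive or planar construction exploiting the freedom in the non-$\Omega$ states, and once the dendroid property is verified, Proposition \ref{thm:Nek2} delivers the topological polynomial $g$ realizing $S$ with $\K(g)$ satisfying (1)--(4).
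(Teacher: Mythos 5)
Your proposal follows essentially the same strategy as the paper's proof: fix the local label structure on $s_\Omega$ (arrows inside $\Omega$ labeled $0$, private cycle blocks $S_i$ for the active states, disjoint $D$-labels), verify via Riemann--Hurwitz that the required labels fit inside $\{1,\ldots,d-1\}$, and then complete to a dendroid (hence kneading) automaton before invoking Proposition \ref{thm:Nek2}. Your counting is correct and is, in different clothing, exactly the paper's counting: you phrase it as a cycle-diagram component count ($R+1$ components, budget $R$ of excess from the non-$\Omega$ cycles), whereas the paper phrases the same bookkeeping in terms of a hypergraph on $\{0,\ldots,d-1\}$ and cites Berge's identity $\sum_i(\#E_i-1)=d-1$, which is again Riemann--Hurwitz. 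A cosmetic difference is that you build $\K(g)$ from scratch, while the paper starts from a $\K(f)$ furnished by Thom's existence theorem and relabels; this is immaterial once Proposition \ref{thm:Nek2} is in hand.

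The one genuine gap is that you stop short of proving the dendroid completion exists: you write that you \emph{expect} the placement of non-$\Omega$ cycles ``to be realizable by an inductive or planar construction'' without supplying an argument. The paper fills this by a greedy one-at-a-time procedure. Your own setup actually hands you the missing step. Start with $R+1$ components in $D(A)$, and list the non-$\Omega$ cycles $c_1, c_2, \ldots$ (with lengths $\ell_1, \ell_2, \ldots$ and $\sum_j(\ell_j-1)=R$). Place $c_k$ so that its $\ell_k$ vertices lie in $\ell_k$ distinct components of the cycle diagram built so far; this merges them into one and never closes a loop. To see that there always are at least $\ell_k$ components available, observe that after $k-1$ steps the component count is $R+1-\sum_{j<k}(\ell_j-1)$, and
\[
R+1-\sum_{j<k}(\ell_j-1)\;\geq\;\ell_k
\quad\Longleftrightarrow\quad
R\;\geq\;\sum_{j\leq k}(\ell_j-1),
\]
which holds since the right side is a partial sum of $R$. (When a single non-$\Omega$ state carries several cycles, choose the vertex sets for those cycles to be disjoint, which the same count permits.) After placing all non-$\Omega$ cycles the diagram is connected with no loops, i.e.\ dendroid; the automaton is then kneading because its input and output state sets coincide, and Proposition \ref{thm:Nek2} yields the desired $g$. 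With this paragraph added, your argument is complete and matches the paper's.
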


\begin{proof}

Choose a planar generating set for $\pi_1(S^2 \setminus P)$. Since $S$ is a polynomial mapping scheme, there exists some topological polynomial $f$ realizing it. Let $\K(f)$ be the kneading automaton of $f$. In its Moore diagram, we will relabel some of the arrows leaving the states $s_\Omega = \{s_1, ... , s_n\}$ associated with $\Omega$ and redefine the actions of possibly all of the active states to produce the Moore diagram of a kneading automaton $\K(g)$ with the desired properties. By Proposition \ref{thm:Nek2}, we know there is some topological polynomial $g$ with this kneading automaton and no twist. So we need only describe how to produce $\K(g)$.

First, relabel all arrows within $s_\Omega$ by 0. Notice there are no arrows entering $s_\Omega$ from outside it (since $\alpha(\Omega) = \Omega$). 

Second, since $$\#(\alpha^{-1}(\Omega) \setminus \Omega ) \leq \sum_{z \in C \setminus \{ \infty\}, \alpha(z) \notin \Omega} (\nu(z) - 1),$$ we have that $$\sum_{z \notin \Omega, \alpha(z) \in \Omega}(\nu(z)-1) + \#(\alpha^{-1}(\Omega) \setminus \Omega ) \leq \sum_{z \in C \setminus \{ \infty\}} (\nu(z) - 1),$$ which by the Riemann-Hurwitz Formula gives us $$\sum_{z \notin \Omega, \alpha(z) \in \Omega}\nu(z) \leq d-1.$$ Thus, the number of arrows entering $\Omega$ in $S$ is at most $d-1$. Redefine the actions of the states in $s_\Omega$ so that the sets on which they act non-trivially are pairwise disjoint (note we can do this by the above calculation). As required by Lemma \ref{thm:cycles}, label any arrows from $s_\Omega$ to states associated with critical points by an appropriate letter on which the originating state acts non-trivially. As for the remaining arrows leaving $s_\Omega$, notice by the calculation above there are enough letters remaining in $\{1, ... , d-1\}$ so that these may be labeled from this set so that these labels are pairwise disjoint with each other and with the non-trivial actions.

Notice that states $s_\Omega$ in our new automaton now fit the requirements listed in the statement of the proposition. However, since we redefined some of the actions, we may need to redefine the actions of the states not in $s_\Omega$ in order to ensure that our automaton is kneading.

Let $k$ be the number of critical values of $S$ not in $\Omega$. Let $\mathcal{H}_0$ be the hypergraph with vertices $\{0, 1, ... , d-1\}$ and hyperedges defined by the actions of the states in $s_\Omega$. We will redefine the actions of the $k$ active states outside of $s_\Omega$ one-by-one by considering their resulting hypergraphs $\mathcal{H}_1, \mathcal{H}_2, ..., \mathcal{H}_k$. We will think of $\mathcal{H}_{i-1}$ as a sub-hypergraph of $\mathcal{H}_i$.

First, define the actions of these active states so as to connect the partial hypergraph induced by the hyperedges of $\mathcal{H}_0$, without adding cycles. By our assumption that $$\#(\alpha^{-1}(\Omega) \setminus \Omega ) \leq \sum_{z \in C \setminus \{ \infty\}, \alpha(z) \notin \Omega} (\nu(z) - 1),$$ this will eventually yield a hypergraph $\mathcal{H}_{i_0}$ where the partial hypergraph induced by the hyperedges is connected. Continue redefining the actions of the active states so as to maintain this property and not create cycles.

By a standard result in combinatorics (see e.g. Proposition 4 in Chapter 17 of \cite{Ber73}), a connected hypergraph with no cycles on $d$ vertices with $m$ hyperedges containing the vertices $\{E_i\}_{i=1}^{m}$, obeys the formula: $$\sum_{i=1}^{m}(\#E_i - 1) = d-1.$$

This, of course, is exactly the Riemann-Hurwitz Formula in our setting. Therefore, we may redefine the actions of all $k$ active states so that the final hypergraph $H_k$ is connected (notice that every vertex lies in some hyperedge) and contains no cycles. In other words, the sequence of permutations defined by our automaton is dendroid.

Let $\K(g)$ be this automaton. Notice that $\K(g)$ is dendroid by construction. Further, since its input and output sets are equal (as they were in $\K(f)$), it is kneading. As mentioned at the beginning of the proof, let $g$ be the unique topological polynomial for this planar generating set which has $S$ as its mapping scheme, $\K(g)$ as its kneading automaton, and trivial twist.

\end{proof}

\begin{lemma}\label{criterion}

For $S = S(C,P,\alpha,\nu)$ a polynomial mapping scheme of degree $d$ meeting the hypotheses of Proposition \ref{thm:omega}, $g$ a topological polynomial realizing $S$ with the same kneading automaton as the one constructed by Proposition \ref{thm:omega}, and $\gamma \in P\Sigma O_{\#P}$ acting by a Dehn twist $T_\Gamma$ about a curve $\Gamma$ separating $\Omega$ (as in the Proposition) from the rest of $P$, if $[\gamma] \otimes \M_g \simeq \M_g \otimes [\gamma]$, then $\Gamma$ is a $g$-stable Levy cycle of length 1 and $g$ is not equivalent to any complex polynomial.

\end{lemma}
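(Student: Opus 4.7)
The proof will follow the same broad outline as Lemma \ref{quadcrit}, but with the mapping class group bookkeeping spelled out more carefully since in higher degree the preimage $g^{-1}(\Gamma)$ can have many components rather than just two. The plan is to translate the bimodule isomorphism into a statement about isotopy classes of branched covers, extract a constraint on $g^{-1}(\Gamma)$ from it, and then read off the Levy cycle.

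First, I would apply Theorem \ref{thm:Nek1}: the bimodule $[\gamma] \otimes \M_g$ is the one associated with the Thurston map $T_\Gamma \circ g$ and $\M_g \otimes [\gamma]$ with $g \circ T_\Gamma$, so the hypothesis says these two branched covers are Thurston equivalent. Equivalently, $g^{-1} \circ T_\Gamma \circ g$ represents the same class as $T_\Gamma$ in the pure mapping class group $PMod(S^2 \setminus P)$. Next, the standard formula for the pullback of a Dehn twist under a branched cover gives $g^{-1} \circ T_\Gamma \circ g \simeq \prod_i T_{\tilde{\Gamma}_i}^{d_i}$, where $\tilde{\Gamma}_1, \dots, \tilde{\Gamma}_m$ run over the components of $g^{-1}(\Gamma)$ and $d_i = \deg(g|_{\tilde{\Gamma}_i})$. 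Dehn twists about peripheral simple closed curves act trivially in $PMod(S^2 \setminus P)$, and twists about a disjoint collection of non-peripheral curves generate a free abelian subgroup, so the equality $\prod_i T_{\tilde{\Gamma}_i}^{d_i} = T_\Gamma$ forces exactly one non-peripheral preimage component $\tilde{\Gamma}$, isotopic to $\Gamma$, mapped by $g$ with degree one.

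This is precisely Definition \ref{def:levy} with $n = 1$, so $\Gamma$ is a $g$-stable Levy cycle of length one. The induced matrix $g_\Gamma$ is then $[1]$, so $\lambda(g_\Gamma) = 1$, making $\{\Gamma\}$ an obstruction in the sense of Definition \ref{def:obstruction}; the topological polynomial form of Thurston's theorem stated immediately after Theorem \ref{thm:thurston} then rules out equivalence of $g$ with any complex polynomial. I expect the only delicate point to be invoking correctly the facts that peripheral Dehn twists are trivial and disjoint non-peripheral Dehn twists are independent inside $PMod(S^2 \setminus P)$ once every post-critical point is marked; both are classical but should be cited with care.
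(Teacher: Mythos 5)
Your proposal takes a genuinely different route from the paper for the key step. Both proofs begin the same way: the bimodule isomorphism $[\gamma]\otimes\M_g\simeq\M_g\otimes[\gamma]$ implies $T_\Gamma\circ g$ and $g\circ T_\Gamma$ are homotopic rel $P$. But to show that the remaining components of $g^{-1}(\Gamma)$ are peripheral, the paper invokes property~(2) of the specific kneading automaton $\K(g)$ produced by Proposition~\ref{thm:omega} (the arrows leaving $s_\Omega$ have disjoint labels, forcing the extra preimage discs to contain at most one post-critical point), whereas you use a purely mapping-class-group argument: express the lift of $T_\Gamma$ as a multitwist supported on $g^{-1}(\Gamma)$, kill the peripheral factors, and then appeal to the fact that a multitwist is determined by its multicurve and exponent vector. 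Your version is cleaner in that it apparently does not need the hypothesis that $g$ has the particular kneading automaton of Proposition~\ref{thm:omega}; the paper's version ties the argument to the explicit combinatorial construction.

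However, the ``standard formula'' you invoke is stated with the exponents inverted. The lift of a Dehn twist along a branched cover involves \emph{fractional} twists around the preimages: on an annulus neighborhood of a component $\tilde\Gamma_i$ mapping with degree $d_i$, solving $g\circ\psi = T_\Gamma\circ g$ forces $\psi$ to be a $\tfrac{1}{d_i}$-twist, not a $d_i$-fold twist. Equivalently, $T_\Gamma^{k}$ lifts to $\prod_i T_{\tilde\Gamma_i}^{k/d_i}$ only when each $d_i$ (for non-peripheral $\tilde\Gamma_i$) divides $k$; for $k=1$ this already forces $d_i=1$ for every non-peripheral component. From that corrected statement, the hypothesis that the lift exists and equals $T_\Gamma$ again forces exactly one non-peripheral preimage, isotopic to $\Gamma$, of degree $1$, so your conclusion is unaffected. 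You should, though, correct the formula (or cite it precisely, e.g.\ from \cite{BN06}) since as written the exponent $d_i$ points in the wrong direction, and your sentence ``forces $\dots$ mapped by $g$ with degree one'' would not actually follow from $\prod T_{\tilde\Gamma_i}^{d_i}=T_\Gamma$ alone if a $d_i$ could exceed $1$ without obstruction to existence of the lift.
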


\begin{proof}

As in Lemma \ref{quadcrit}, we have that $T_\Gamma \circ g$ homotopic to $g \circ T_\Gamma$, so $g(\Gamma)$ must be homotopic to $\Gamma$ and $g$ must map it by degree 1. By property (2) of the kneading automaton $\K(g)$ from Proposition \ref{thm:omega}, every component of $g^{-1}(\Gamma)$ except $\Gamma$ itself is peripheral. Hence, $\Gamma$ is a stable Levy cycle of length 1 for $g$.

\end{proof}

Our main result follows from the above proposition using the argument of the previous section.

\begin{main}

Suppose that a polynomial mapping scheme satisfies one of the following conditions:

\begin{enumerate}
\item \label{Case 1} at least one (non-attractor) period of length at least two and not containing critical values, 
\item \label{Case 2} at least two (non-attractor) periods not containing critical values,
\item \label{Case 3} at least two non-attractor periods both of length at least two, or
\item \label{Case 4} at least four non-attractor periods.
\end{enumerate}

Then this scheme is realized by a topological polynomial that is not equivalent to any complex polynomial.

\end{main}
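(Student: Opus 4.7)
The plan is to realize each of the four cases within the framework of Proposition~\ref{thm:omega} and Lemma~\ref{criterion}, mimicking the proof of Theorem~\ref{thm:quadraticpreperiodic}. In each case I exhibit an $\alpha$-invariant subset $\Omega\subseteq Z\setminus C$ satisfying the hypothesis of Proposition~\ref{thm:omega}, with $|\Omega|\geq 2$ and $|P\setminus\Omega|\geq 2$, so that a curve $\Gamma$ separating $\Omega$ from $P\setminus\Omega$ is non-peripheral. Proposition~\ref{thm:omega} then produces a topological polynomial $g_0$ realizing the scheme, with kneading bimodule $\M_0 = \M_{g_0}$ whose generating automaton has the disjointness properties (1)--(4). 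Indexing $\Omega = \{\omega_{i_1},\ldots,\omega_{i_n}\}$, define $\gamma_r$ exactly as in Section~\ref{sec:quad} (the cyclic product of $a_{i_r,i_s}$ over $s\neq r$), so that $\gamma = \gamma_n\gamma_{n-1}\cdots\gamma_1$ acts on $S^2\setminus P_{g_0}$ as the Dehn twist about $\Gamma$. Setting $\M_g = \M_0\otimes[\gamma_{n-1}\gamma_{n-2}\cdots\gamma_1]$, Proposition~\ref{thm:Nek2} yields a topological polynomial $g$ realizing the same scheme with this bimodule. The calculation of Theorem~\ref{thm:quadraticpreperiodic} then gives $[\gamma]\otimes\M_g \simeq \M_g\otimes[\gamma]$, and Lemma~\ref{criterion} produces the Levy obstruction.

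The arithmetic backbone of the case selection is the following estimate. For $\alpha$-invariant $\Omega\subseteq Z\setminus C$, set
\[
a(\Omega) = \#(\alpha^{-1}(\Omega)\setminus\Omega) + \sum_{c\in C\setminus\{\infty\},\,\alpha(c)\in\Omega}(\nu(c)-1),
\]
so that Proposition~\ref{thm:omega}'s hypothesis is precisely $a(\Omega)\leq d-1$. For pairwise disjoint non-attractor periods $\pi_1,\ldots,\pi_m$, the sets $\alpha^{-1}(\pi_k)\setminus\pi_k$ are pairwise disjoint (each element has a unique image in a unique $\pi_k$), hence $a(\bigcup_k\pi_k) = \sum_k a(\pi_k)$. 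Both summands of $\sum_k a(\pi_k)$ are bounded by $d-1$: the first because each $c\in C\setminus\{\infty\}$ contributes at most one entry element to $\bigsqcup_k(\alpha^{-1}(\pi_k)\setminus\pi_k)$ (the unique last step of its forward orbit before entering $\bigcup_k\pi_k$) and $|C\setminus\{\infty\}|\leq d-1$; the second because $\sum_{c\in C\setminus\{\infty\}}(\nu(c)-1)=d-1$. Therefore $\sum_k a(\pi_k)\leq 2(d-1)$.

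Dispatching the four cases: In~(1) set $\Omega$ to be the given period; the critical-value summand of $a(\Omega)$ vanishes and $a(\Omega)\leq d-1$ follows from the first bound alone. In~(2) set $\Omega$ to be the union of the two periods, which again has vanishing critical-value summand, so $a(\Omega)\leq d-1$. In~(3) set $\Omega=\pi_i$ with $a(\pi_i)=\min(a(\pi_1),a(\pi_2))\leq(a(\pi_1)+a(\pi_2))/2\leq d-1$. In~(4) set $\Omega=\pi_i\cup\pi_j$ where $\{i,j\}$ minimizes $a(\pi_i)+a(\pi_j)$ over pairs of the four periods; pigeonhole applied to $\sum_{k=1}^4 a(\pi_k)\leq 2(d-1)$ gives $a(\Omega)=a(\pi_i)+a(\pi_j)\leq d-1$. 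In every case $|\Omega|\geq 2$ by construction, and $|P\setminus\Omega|\geq 2$ because $\infty\in P\setminus\Omega$ together with either the remaining periods (Cases~2, 3, 4) or (Case~1) a forced preperiodic element outside $\Omega$: every $\omega\in\Omega$ is reached from some $c\in C\setminus\{\infty\}$, and since $\Omega$ contains no critical values the element $\alpha(c)$ lies in $P\setminus\Omega$ distinct from $\infty$.

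The main obstacle will be verifying the wreath-recursion identity $[\gamma_r]\otimes\M_0 = \M_0\otimes[\gamma_{r-1}]$ (cyclically in $r$) in this generality. This is a direct generalization of Lemma~\ref{thm:gamma}, obtained from higher-degree analogues of the explicit computations of Proposition~\ref{thm:comp}. The disjointness clauses (2)--(4) of Proposition~\ref{thm:omega} are designed for this purpose: conjugation by any $a_{i_r,i_s}$ with both indices in $\Omega$ permutes labels and restrictions only among the pairwise-disjoint ``slots'' occupied by the states $s_{\omega_{i_r}}$, never introducing interactions with states outside $s_\Omega$, while the coordinate-$0$ predecessor cycles within $s_\Omega$ drive the cyclic shift exactly as in the quadratic case.
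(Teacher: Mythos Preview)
Your reduction to Proposition~\ref{thm:omega} and Lemma~\ref{criterion} is exactly the paper's strategy, and your arithmetic reformulation of the hypothesis as $a(\Omega)\le d-1$ together with the additivity $a(\pi_i\cup\pi_j)=a(\pi_i)+a(\pi_j)$ and the bound $\sum_k a(\pi_k)\le 2(d-1)$ is clean---arguably tidier than the paper's ad hoc inequalities in Cases~3 and~4. For Cases~1 and~3, where $\Omega$ is a single $\alpha$-cycle, your argument is correct and matches the paper's.

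The gap is in Cases~2 and~4, where your $\Omega$ is a \emph{union} of two $\alpha$-cycles. The identity you invoke, $[\gamma_r]\otimes\M_0=\M_0\otimes[\gamma_{r-1}]$, comes from the coordinate-$0$ restriction sending $s_{i_r}$ to its $\alpha$-preimage in $\Omega$; the shift is by $\alpha^{-1}$, not by your chosen linear ordering of $\Omega$. When $\Omega$ consists of two fixed points (the only genuinely new situation in Case~2), $\alpha$ is the identity on $\Omega$ and the correct identity is $[\gamma_r]\otimes\M_0=\M_0\otimes[\gamma_r]$, not $\M_0\otimes[\gamma_{r-1}]$. Consequently your twisted bimodule $\M_g=\M_0\otimes[\gamma_1]$ satisfies $[\gamma]\otimes\M_g=\M_0\otimes[\gamma_2\gamma_1^2]$, while $\M_g\otimes[\gamma]=\M_0\otimes[\gamma_1\gamma_2\gamma_1]$; these agree only if $a_{1,2}$ and $a_{2,1}$ commute in $P\Sigma O_m$, which they do not for $m\ge 3$. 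The paper handles this by taking $\M_g=\M_0$ with no twist in the two-fixed-point situation, so that $[\gamma]\otimes\M_0=\M_0\otimes[\gamma]$ directly. In Case~4 the paper likewise splits: if the chosen $\Omega$ contains a period of length at least two, drop to that single period (your additivity gives $a(\pi)\le a(\Omega)\le d-1$) and run Case~3; if both constituent periods are fixed points, run Case~2. Your uniform formula does not cover these subcases.

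A minor point: your justification of $|P\setminus\Omega|\ge 2$ in Case~2 appeals to ``remaining periods,'' but you have put both given periods into $\Omega$. The argument you give for Case~1 (a forced preperiodic critical value $\alpha(c)\in P\setminus\Omega$ with $\alpha(c)\ne\infty$) works verbatim here as well.
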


\begin{proof}

Let $S$ be the mapping scheme satisfying one of the cases.

\begin{enumerate}

\item $S$ has one period of length at least two not containing critical values.

Let $\Omega = \{\omega _1, \omega_2, ... , \omega_n\}$ be the period given (with $\alpha(\omega_i) = \omega_{i+1}$ and $\alpha(\omega_n) = \omega_1$) and let $P = \{\infty, \omega_1, ... , \omega_m \}$. Choose a planar generating set $\{s_1, ... , s_m\}$ such that $s_i$ loops around $\omega_i$. By the Riemann-Hurwitz Formula, $C \setminus \{\infty\}$ has at most $d-1$ elements. So $\Omega$ has at most $d-1$ arrows incoming (and none outgoing) in the mapping scheme $S$. Thus, we may apply Proposition \ref{thm:omega}; let $f$ be the topological polynomial given by this proposition, and let $\M_0$ be the bimodule generated by the kneading automaton $\K(f)$.

For $1 \leq i \leq n$, define $$\gamma_i = a_{i,i-1}a_{i,i-2} ... a_{i,1}a_{i,n}a_{i,n-1} ... a_{i, i+1} $$ and $\gamma = \gamma_n\gamma_{n-1} ... \gamma_2\gamma_1$ (similar to the $\gamma_i$ in Lemma \ref{thm:gamma} but reducing all the indices by $k$).

Consider  $[\gamma_i] \otimes \M_0$. First, we have  $[a_{i,i+1}] \otimes \M_0$. Now, we do not know the full wreath recursions for the states $s_{i}$ and $s_{i+1}$, however we do know they are inactive and that $s_{i}|_0 = s_{i-1}$ and $s_{i+1}|_0 = s_{i}$. So $s_{i}^{s_{i+1}}|_0 = s_{i-1}^{s_{i}}$. Further, since none of the other arrows from these two states share labels with each other, the wreath recursions do not share non-trivial coordinates besides the one we have already considered. Thus, the only twist produced is $a_{i-1, i}$, and the kneading bimodule remains unchanged.

We may repeat the above argument for the rest of $\gamma_i$, and we find that \begin{align*} [\gamma_i] \otimes \M_0 &= [a_{i,i-1}a_{i,i-2} ... a_{i,1}a_{i,n}a_{i,n-1} ... a_{i, i+1}] \otimes \M_0 = \\ &= \M_0 \otimes [a_{i-1,i-2}a_{i-1,i-3} ... a_{i-1,n}a_{i-1,n-1}a_{i-1,n-2} ... a_{i-1, i}]  = \\ &= \M_0 \otimes [\gamma_{i-1}].\end{align*} So $[\gamma_i] \otimes \M_0 = \M_0 \otimes [\gamma_{i-1}]$ and $[\gamma_1] \otimes \M_0 = \M_0 \otimes [\gamma_{n}]$ just as in the lemma and for the exact same reasons.

Set $\M_g = \M_0 \otimes [\gamma_{n-1}\gamma_{n-2} ... \gamma_1]$. Since this is a twisted kneading bimodule, let $g$ be the topological polynomial uniquely determined by this twisted kneading bimodule and this planar generating set. Notice that $g$ has mapping scheme $S$.

As before in the proof of Theorem \ref{thm:quadraticpreperiodic}, we have that \begin{align*}[\gamma] \otimes \M_g &= [\gamma_n\gamma_{n-1} ... \gamma_1] \otimes \M_0 \otimes [\gamma_{n-1} ... \gamma_1] = \\ &= \M_0 \otimes [\gamma_{n-1} ... \gamma_1\gamma_n] \otimes [\gamma_{n-1} ... \gamma_1] = \\ &= \M_0 \otimes [\gamma_{n-1} ... \gamma_1] \otimes [\gamma] = \\ &= \M_g \otimes [\gamma].\end{align*} 

%

By Lemma \ref{criterion}, we are done.

\item $S$ has two periods not containing critical values.

By (\ref{Case 1}), we need only consider when we have $\omega_1, \omega_2 \in P$ such that $\alpha$ fixes both and neither are critical values. Let $P = \{\infty, \omega_1, \omega_2, \omega_3, ... , \omega_m\}$. Choose a planar generating set $\{s_1, ... , s_m\}$ with $s_i$ looping around $\omega_i$.

By the Riemann-Hurwitz Formula, in $S$ there are at most $d-1$ arrows incoming to $\Omega = \{\omega_1, \omega_2\}$ from outside the set (there are no outgoing arrows). So by Proposition \ref{thm:omega}, there exists topological polynomial $f$ realizing $S$ with the properties outlined in the statement of the proposition.

Let $\Gamma$ be a simple closed curve separating $\Omega$ from the rest of $P$. Note that $\Gamma$ is non-peripheral.  Let $\gamma, \gamma_1, \gamma_2$ be defined by $\gamma_1 = a_{1,2}, \gamma_2 = a_{2,1}, \gamma = \gamma_2\gamma_1$. Note that $\gamma$ acts by a Dehn twist about the curve $\Gamma$.

Let the bimodule generated by the kneading automaton $\K(f)$ be $\M_g$. Let $g$ be the unique topological polynomial determined by the (twisted) kneading bimodule $\M_g$ and the planar generating set $\{s_1, ... , s_m\}$ (this is actually the polynomial constructed in Proposition \ref{thm:omega}). Notice that $g$ has mapping scheme $S$.

Since $s_1|_0 = s_1$ and $s_2|_0 = s_2$ in $\M_g$, and $s_1$ and $s_2$ share no other non-trivial restrictions, we have that $[a_{1,2}] \otimes \M_g = \M_g \otimes [a_{1,2}]$ and $[a_{2,1}] \otimes \M_g = \M_g \otimes [a_{2,1}]$. Thus, $[\gamma] \otimes \M_g = \M_g \otimes [\gamma]$, and we may apply Lemma \ref{criterion}.

\item $S$ has two non-attractor periods both of length at least two.

Let $\Omega_1$ and $\Omega_2$ be the two periods in question. For $i = 1,2$, let $p_i = \#(\alpha^{-1}(\Omega_i) \setminus \Omega_i)$. We may assume $p_1 \leq p_2$. Notice that $$\#(\alpha^{-1}(\Omega_1) \setminus \Omega_1) = p_1 \leq p_2 \leq \sum_{z \in C \setminus \{ \infty\}, \alpha(z) \notin \Omega_1} (\nu(z) - 1).$$ So Proposition \ref{thm:omega} applies to $\Omega= \Omega_1$.

Now proceed as in (\ref{Case 1}), with the additional observation that the actions of the states associated with critical values do not affect the conjugations, since all the non-trivial actions of a single state are disjoint from the non-trivial actions and restrictions of all the other states associated with $\Omega$.

\item $S$ has four non-attractor periods.

Let $\Omega_1$ be the union of two of the non-attractor periods, and $\Omega_2$ be the union of the other two. By the same argument as above in (\ref{Case 3}), we may assume that $\Omega = \Omega_1$ satisfies the hypotheses of Proposition \ref{thm:omega}.

If $\Omega$ is the union of two periods both of length one, then proceed as in (\ref{Case 2}), with the additional observation that the actions of the states associated with critical values do not affect the conjugations, since all the non-trivial actions of a single state are disjoint from the non-trivial actions and restrictions of all the other states associated with $\Omega$.

If not, consider a period of length greater than one in $\Omega$ and proceed as in (\ref{Case 3}).

\end{enumerate}
\end{proof}

\section{Topological polynomials admitting different obstructions}
\label{sec:obstructions}

While our proofs explicitly construct a single obstructed polynomial realizing a particular mapping scheme, we can easily extend the ideas to produce many more obstructed polynomials realizing the same set of mapping schemes.

The simplest way to do so is by pre-composing with Dehn twists about the obstruction $\Gamma$ (as noted in \cite{BN06}). That is, changing the twisted kneading bimodule from $\M_g$ to $\M_g \otimes [\gamma^l]$ for any $l \in \Z$. The proof that $\Gamma$ is a Levy cycle of length 1 still holds.

See Figure \ref{fig:g} for a topological polynomial produced by our proof, Figure \ref{fig:gtwist} for this example twisted as in the previous paragraph with $l=1$, and Figure \ref{fig:guntwist} for $l=-1$.

\begin{figure}[h]
\includegraphics[width=5in]{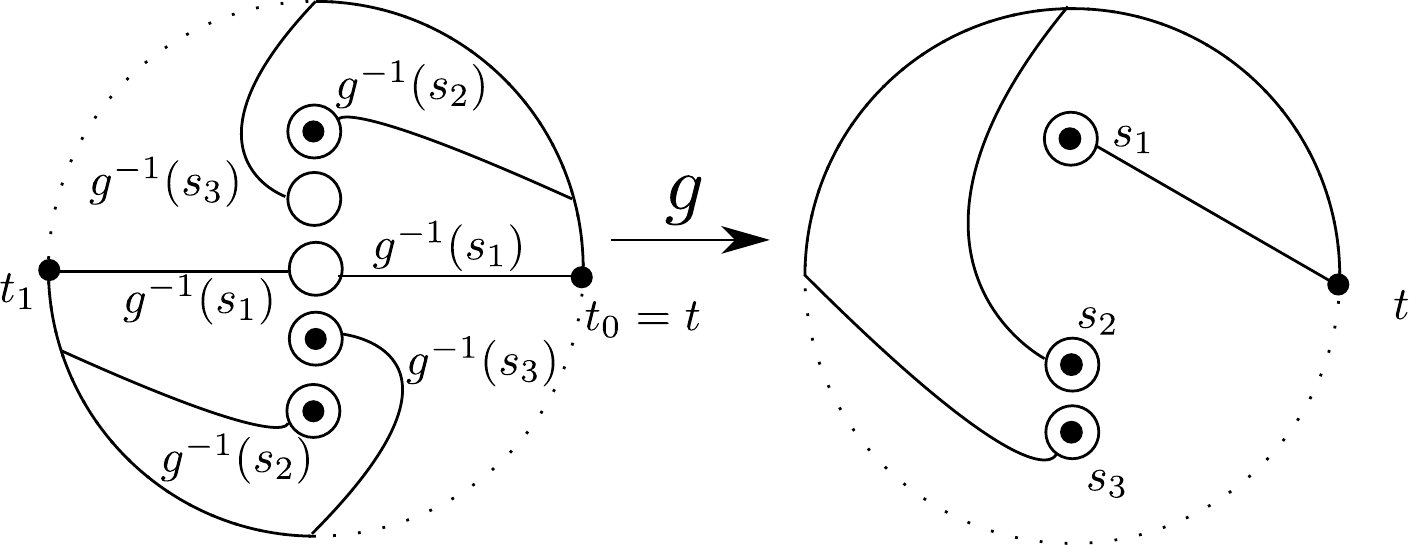}
\caption{The obstructed topological polynomial $g$}
\label{fig:g}
\end{figure}

\begin{figure}[h]
\includegraphics[width=5in]{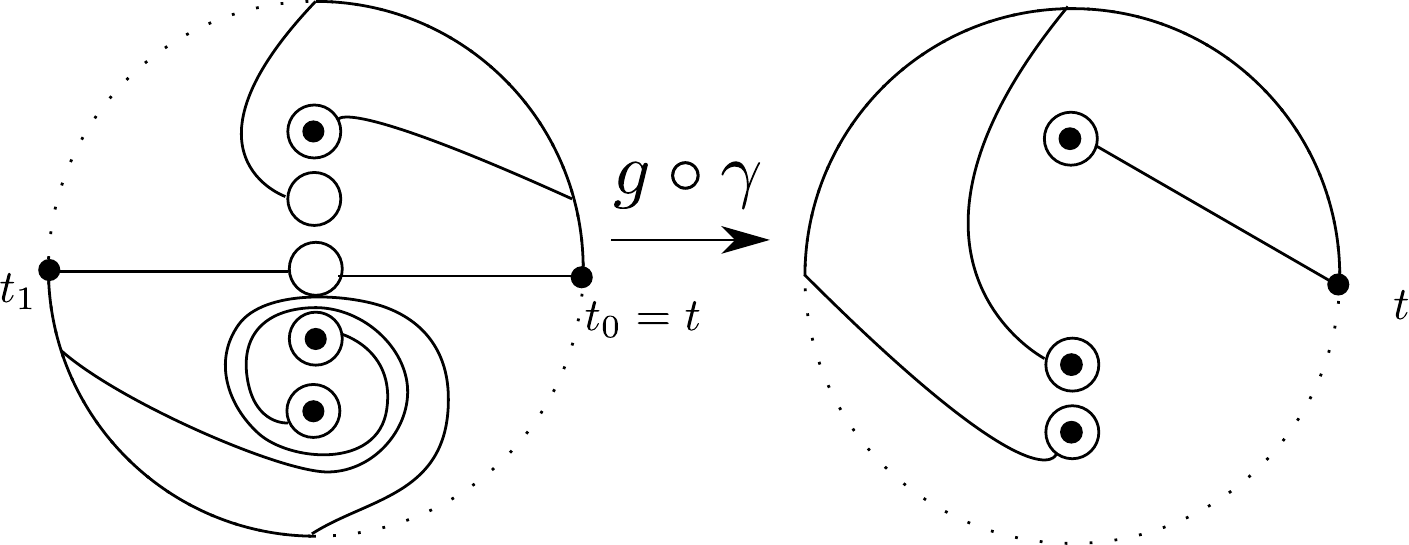}
\caption{The obstructed topological polynomial $g \circ \gamma$}
\label{fig:gtwist}
\end{figure}

\begin{figure}[h]
\includegraphics[width=5in]{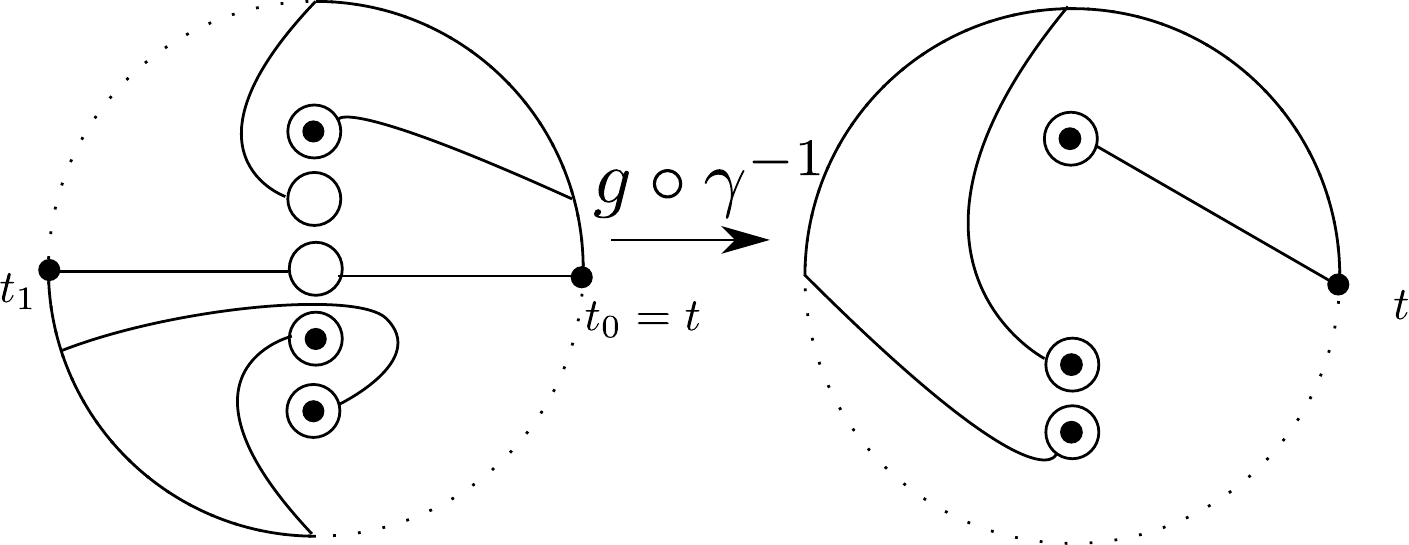}
\caption{The obstructed topological polynomial $g \circ \gamma^{-1}$}
\label{fig:guntwist}
\end{figure}

This method can also produce topological polynomials obstructed by Levy cycles of length greater than 1. 

\begin{theorem}
\label{thm:obstructions}
For $S$ a polynomial mapping scheme of degree $d$ whose post-critical set contains a period of length $n \geq 2$ which does not contain any critical values, and $1 < l < n$, $l \leq d$ such that $l$ divides $n$, then there exists a topological polynomial $g$ which realizes $S$ and admits a Levy cycle of length $l$, but does not admit a Levy cycle consisting only of a single curve surrounding the period in question.

\end{theorem}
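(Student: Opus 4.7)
My plan is to adapt Case~(1) of Theorem~\ref{thm:main}: instead of stabilizing a single Dehn twist, the constructed bimodule will cyclically intertwine $l$ pairwise disjoint Dehn twists.

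Set $m=n/l$; since $1<l<n$ and $l\mid n$, we have $m\geq 2$. Partition $\Omega$ into consecutive arcs $B_r=\{\omega_{(r-1)m+1},\dots,\omega_{rm}\}$ for $r=1,\dots,l$, and choose pairwise disjoint, pairwise non-homotopic, non-peripheral simple closed curves $\Gamma_1,\dots,\Gamma_l$ in $S^2\setminus P$ with $\Gamma_r$ separating $B_r$ from the rest of $P$. Let $\eta_r\in P\Sigma O_{\#P}$ realize the Dehn twist about $\Gamma_r$, written as a product of generators $a_{i,k}$ with $i,k\in B_r$ in the pattern following Lemma~\ref{lem:twist}. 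Applying Proposition~\ref{thm:omega} to $\Omega$ yields a topological polynomial $f$ realizing $S$ with kneading bimodule $\M_0$ satisfying the shift identity $[a_{i,k}]\otimes\M_0 \simeq \M_0\otimes[a_{i-1,k-1}]$ for $i,k\in\Omega$, just as in Case~(1).

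The core of the argument is to construct a twist $T\in P\Sigma O_{\#P}$ such that $\M_g:=\M_0\otimes[T]$ satisfies the cyclic intertwining identities
\[
[\eta_r]\otimes\M_g \;\simeq\; \M_g\otimes[\eta_{r-1}] \qquad (r=1,\dots,l,\ \text{indices mod }l).
\]
Here the hypothesis $l\leq d$ enters critically: it affords enough distinct alphabet letters on the within-$\Omega$ arrows of $\K(f)$ to refine the kneading labels so that the shift identity on $\M_0$ is filtered in a block-compatible way, and $T$ can then be built as an ordered product of block-level analogues of the $\gamma_i$'s of Case~(1). By Proposition~\ref{thm:Nek2}, $\M_g$ is realized by a topological polynomial $g$ with mapping scheme $S$, and a cyclic adaptation of Lemma~\ref{criterion}---retaining condition~(2) of Proposition~\ref{thm:omega} to keep the extra components of each $g^{-1}(\Gamma_r)$ peripheral---exhibits $\{\Gamma_1,\dots,\Gamma_l\}$ as a $g$-stable Levy cycle of length~$l$.

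To finish, I would rule out a length-$1$ Levy cycle surrounding $\Omega$: if $\gamma$ denotes the Dehn twist about such a curve, the shift identity sends $\gamma$ to a cyclic conjugate of itself, so the required commutation $[\gamma]\otimes\M_g\simeq\M_g\otimes[\gamma]$ reduces to a centralizer condition on $T$ which the genuinely cyclic structure of $T$ obstructs when $l\geq 2$. The hardest step will be engineering $T$ so that all $l$ cyclic intertwining identities hold simultaneously: the naive block-level $\gamma_i$'s shift indices by only~$1$ under $\M_0$, whereas cycling the block structure requires an effective shift by~$m$, and reconciling this mismatch is precisely where the refined labeling (made possible by $l\leq d$) comes in.
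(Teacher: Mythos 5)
Your overall strategy matches the paper's: partition the period $\Omega$ into $l$ blocks of size $m=n/l$ that are cyclically permuted by $\alpha$, surround each block by a curve $\Gamma_r$, realize the Dehn twists $\eta_r$ in $P\Sigma O_{\#P}$, and arrange the bimodule so that the twists cycle: $[\eta_r]\otimes\M_g\simeq\M_g\otimes[\eta_{r-1}]$. You also correctly identify the role of the hypothesis $l\leq d$ (it provides enough alphabet letters) and the conclusion via a cyclic version of Lemma~\ref{criterion} together with condition~(2) of Proposition~\ref{thm:omega} to keep the other preimage components peripheral. So the architecture is right.

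The genuine gap is in the sentence ``$\M_g:=\M_0\otimes[T]$'' where $\M_0$ is the kneading bimodule produced by Proposition~\ref{thm:omega}. No choice of twist $T\in P\Sigma O_{\#P}$ can make this work, because the all-zero labeling of the within-$\Omega$ arrows in $\K(f)$ forces the shift $[a_{i,i'}]\otimes\M_0=\M_0\otimes[a_{i-1,i'-1}]$ with unit step on all of $\Omega$, and post-composition with $[T]$ only alters the $P\Sigma O$-twist, never the kneading labels that govern where these restrictions land. As a result the Dehn twist about a consecutive arc of $\Omega$ restricts to the twist about the arc shifted by one index, which is not another $\Gamma_r$, and the identities you need cannot hold for any $T$. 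You do flag this mismatch as ``the hardest step,'' but your remark about ``refining the kneading labels'' is not something you can do by twisting: it requires abandoning the automaton from Proposition~\ref{thm:omega} and building a \emph{different} kneading automaton from scratch. That is exactly what the paper does. It first re-indexes $\Omega$ so that $\alpha$ shifts indices by $k=n/l$ (so the consecutive index blocks $\Omega_i=\{\omega_{ik+1},\dots,\omega_{(i+1)k}\}$ are precisely the $\alpha^l$-orbits), and then, \emph{inside} the construction of Proposition~\ref{thm:omega}, relabels the within-$\Omega$ arrows: arrows leaving $\Omega_i$ get the label $i$, and arrows leaving $\Omega_i$ to outside $\Omega$ get pairwise distinct labels from $\{0,\dots,d-1\}\setminus\{i\}$. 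This is exactly where $l\leq d$ is used. With this new $\M_0$, the shift becomes block-compatible, one gets $[\gamma_i]\otimes\M_0=\M_0\otimes[\gamma_{i-1}]$ directly, with a cyclic rotation appearing only in the wrap-around block, and the corrective twist $T=\gamma_{l-1,k-1}\cdots\gamma_{l-1,1}$ absorbs that rotation. You would also want to make the partition's compatibility with $\alpha$ explicit (your ``consecutive arcs $B_r$'' are only $\alpha$-invariant blocks after the re-indexing); and your one-line dismissal of a length-$1$ Levy cycle at the end would need more than an appeal to ``the cyclic structure of $T$'' --- though the paper itself leaves that verification to the reader.
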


\begin{proof}

Let $k = \frac{n}{l}$. Let $\Omega = \{ \omega_1, \omega_2, ... , \omega_n\}$ be the period given with $\alpha(\omega_i) = \omega_{i+k}$ for $1 \leq i \leq n-k$ and $\alpha(\omega_{n-k+i}) = \omega_{i+1}$ for $1 \leq i < k$, and $\alpha(\omega_n) = \omega_1$. Let $P = \{\infty, \omega_1, ... , \omega_m\}$. Choose a planar generating set $\{s_1, ... , s_m\}$ so that $s_i$ loops around $\omega_i$ in the positive direction.

Let $f$ be a topological polynomial realizing $S$. As in the proof of Proposition \ref{thm:omega}, we create a kneading automaton by copying $\K(f)$ and changing some of the labels on arrows leaving the period $\Omega$. Instead of labeling all the arrows within the period by 0, we cycle through the labels $\{0, 1, ... , l-1\}$ in the following way:

The arrows leaving $\Omega_0 = \{ \omega_1, \omega_2, ... \omega_k\}$ (but staying within $\Omega$) are labeled by 0. The arrows leaving $\Omega_1 = \{ \omega_{l+1}, ... , \omega_{2k}\}$ are labeled by 1. We continue in this way, labeling the arrows leaving $\Omega_i = \{ \omega_{ik + 1}, ... , \omega_{(i+1)k}\}$ (but staying within the period $\Omega$) by $i$ for $0 \leq i \leq l-1$.

Since there are at most $d-1$ arrows leaving the period (by the Riemann-Hurwitz Formula), there are at most $d-1$ arrows leaving the period from any one of the $\Omega_i$ defined above. Label the arrows leaving each $\Omega_i$ with pairwise distinct labels from the set $\{0, ... , d-1\} \setminus \{i\}$. Define the bimodule generated by this kneading automaton to be $\M_0$.

Define $\gamma_{i,j} \in P \Sigma O_{m}$ for $0 \leq i \leq l - 1, 1 \leq j \leq k$ by \begin{gather*} \gamma_{i ,j} = a_{ik+j,ik+j-1}a_{ik+j,ik+j-2} ... a_{ik+j,ik+1}a_{ik+j,(i+1)k}a_{ik+j,(i+1)k-1} ... a_{ik+j, ik+j+1}.\end{gather*} Let $\gamma_i = \gamma_{i,k}\gamma_{i,k-1} ... \gamma_{i,1}$, and $\gamma = \gamma_{l-1}\gamma_{l-2} ... \gamma_1\gamma_0$. 

So, for example, if $n = 4$ and $l = 2$, we would have: \begin{align*} \gamma_{0,1} &= a_{1,2} \\ \gamma_{0,2} &= a_{2,1} \\ \gamma_{1,1} &= a_{3,4} \\ \gamma_{1, 2} &= a_{4,3} \\ \gamma_0 &= a_{2,1}a_{1,2} \\ \gamma_1 &= a_{4,3}a_{3,4}.\end{align*}

Another example: if $n= 9$ and $l = 3$, we would have: \begin{align*} \gamma_{0,1} &= a_{1, 3}a_{1, 2} \\ \gamma_{0,2} &= a_{2, 1}a_{2, 3} \\ \gamma_{0,3} &= a_{3,2}a_{3, 1} \\ \gamma_{1,1} &= a_{4, 6}a_{4, 5} \\ \gamma_{1,2} &= a_{5, 4}a_{5, 6} \\ \gamma_{1,3} &= a_{6, 5}a_{6, 4} \\ \gamma_{2,1} &= a_{7, 9}a_{7, 8} \\ \gamma_{2,2} &= a_{8, 7}a_{8, 9} \\ \gamma_{2,3} &= a_{9, 8}a_{9, 7},\end{align*} which gives \begin{align*} \gamma_0 &= a_{3,2}a_{3, 1}a_{2, 1}a_{2, 3}a_{1, 3}a_{1, 2} \\ \gamma_1 &= a_{6, 5}a_{6, 4}a_{5, 4}a_{5, 6}a_{4, 6}a_{4, 5} \\ \gamma_2 &= a_{9, 8}a_{9, 7}a_{8, 7}a_{8, 9}a_{7, 9}a_{7, 8}.\end{align*}

Notice that for $i_1 \neq i_2$, we have $[\gamma_{i_1,j_1}, \gamma_{i_2, j_2}]$ trivial.

Define $\Gamma$ to be the multicurve $\{\Gamma_0, ... , \Gamma_{l-1}\}$, where $\Gamma_i$ is a nonperipheral simple closed curve separating $\Omega_i$ from the rest of $P_f$. Notice that $\gamma_i$ acts by a Dehn twist about $\Gamma_i$.

Consider $[\gamma_{i,j}] \otimes \M_0$. By construction, all the states associated with post-critical points in $\Omega_i$ pairwise share exactly one non-trivial restriction coordinate: $i$. So we have that $[\gamma_{i,j}] \otimes \M_0 = \M_0 \otimes [\gamma_{i-1, j}]$ for $1 \leq i < l-1$, $[\gamma_{0,j}] \otimes \M_0 = \M_0 \otimes [\gamma_{l-1, j-1}]$, and $[\gamma_{0,k}] \otimes \M_0 = \M_0 \otimes [\gamma_{l-1, 1}]$. Thus, $[\gamma_i] \otimes \M_0 = \M_0 \otimes [\gamma_{i-1}]$ for $1 \leq i \leq l-1$, and $$[\gamma_0] \otimes \M_0 = \M_0 \otimes [\gamma_{l-1,k-1}\gamma_{l-1, k-2} ... \gamma_{l-1, 1}\gamma_{l-1, k}].$$

Define $$\M_g =\M_0 \otimes [\gamma_{l-1,k-1}\gamma_{l-1, k-2} ... \gamma_{l-1, 1}].$$ Notice that for $1 \leq i \leq l-1$, we have $[\gamma_i] \otimes \M_g = \M_g \otimes [\gamma_{i-1}]$. Further, for $i=0$, 

\begin{align*} [\gamma_0] \otimes \M_g &= \M_0 \otimes [\gamma_{l-1,k-1}\gamma_{l-1, k-2} ... \gamma_{l-1, 1}\gamma_{l-1, k}] \otimes [\gamma_{l-1,k-1} ... \gamma_{l-1,1}] = \\ &= \M_0 \otimes [\gamma_{l-1,k-1} .. \gamma_{l-1,1}] \otimes [\gamma_{l-1}] = \\ &= \M_g \otimes [\gamma_{l-1}].\end{align*} 

For $g$ the unique topological polynomial determined by the bimodule $\M_g$ and the planar generating set $\{s_1, ... , s_m\}$, notice that $g$ has mapping scheme $S$. By the above computation, $g$ takes each curve $\Gamma_{i}$ to $\Gamma_{i+1}$ (adding mod $l$) by a degree 1 map. Further, we see by the labeling of our kneading automaton that the components of $g^{-1}(\Gamma_i)$ not labeled by $i$ are all peripheral. So the multicurve $\Gamma$ is a $g$-stable Levy cycle of length $l$.

We leave to the reader the verification that the simple closed curve surrounding the entire period is not an obstruction.

\end{proof}

For example, both the topological polynomials $g$ and $f$ in Figures \ref{fig:obstruction4} and \ref{fig:obstruction22} are quadratic with a preperiodic mapping scheme with preperiod length one and period length four. However, while $g$ admits a Levy cycle of length one (not pictured), $f$ only admits a Levy cycle of length two, shown in Figure \ref{fig:obstruction22} with its inverse images.

\begin{figure}[h]
\includegraphics[width=5in]{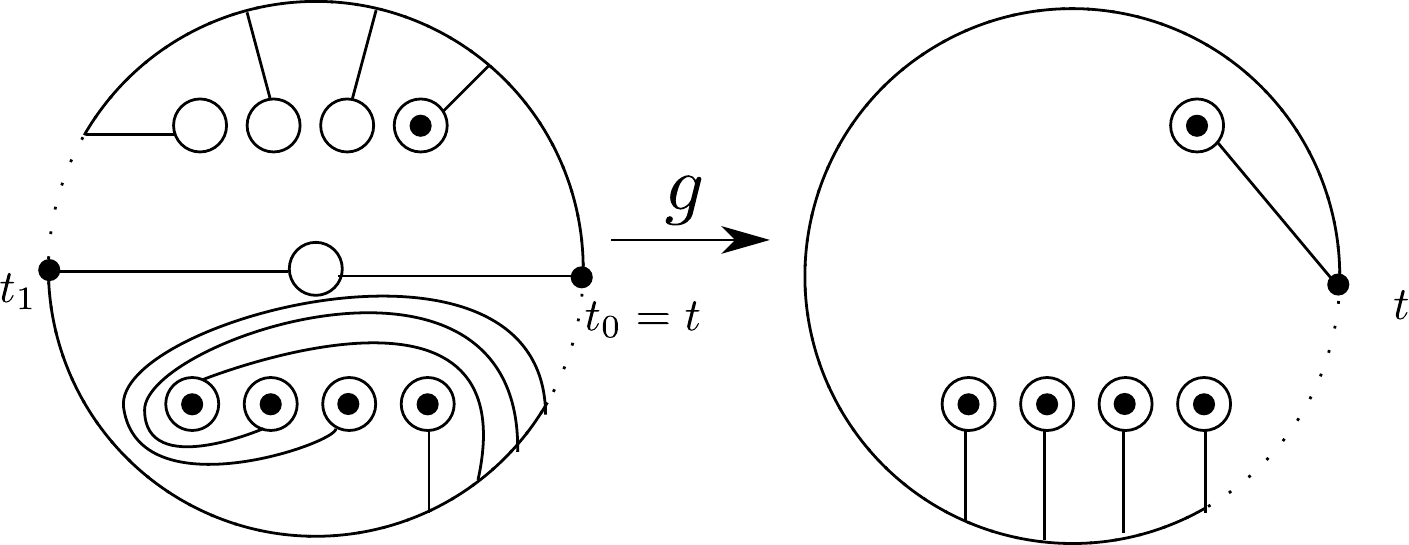}
\caption{An obstructed topological polynomial with Levy cycle of length one}
\label{fig:obstruction4}
\end{figure}

\begin{figure}[h]
\includegraphics[width=5in]{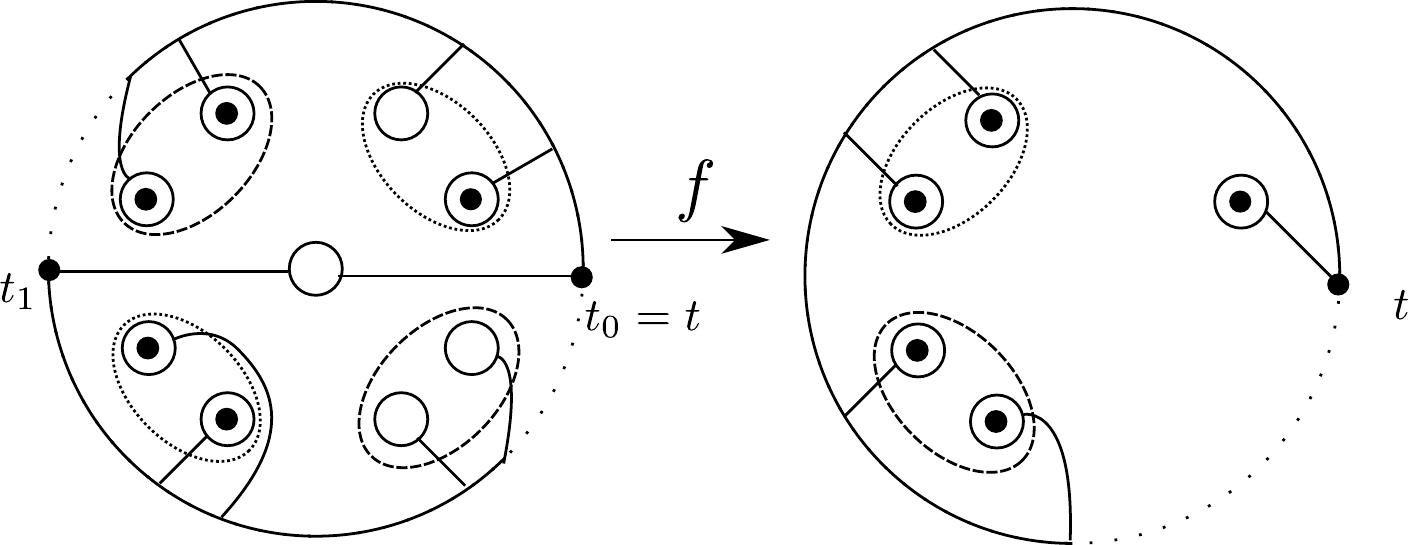}
\caption{An obstructed topological polynomial with Levy cycle of length two}
\label{fig:obstruction22}
\end{figure}

\section{Open problems}
\label{sec:future}

While there exist many polynomial mapping schemes that fall outside the purview of both Theorem \ref{thm:main} and the Berstein-Levy Theorem, in many of these cases we may still apply Proposition \ref{thm:omega} to show the existence of obstructed topological polynomials realizing the mapping scheme. For example, relatively few mapping schemes with exactly three non-attractor periods do not meet the hypotheses of Proposition \ref{thm:omega}, even if they do not meet the conditions of Theorem \ref{thm:main}.

However, there do exist non-hyperbolic mapping schemes for which Proposition \ref{thm:omega} does not apply. For instance, mapping schemes which have only a single period (say of length at least two), and that period contains all the critical values.

\begin{open}

What can we say about mapping schemes for which no $\Omega \subset Z\setminus C$ satisfies the conditions of Proposition \ref{thm:omega}?

\end{open}

Even the single period example mentioned above seems more subtle than the cases we have addressed in this paper. For instance, consider the two schemes in Figure \ref{fig:unknown_schemes}.

\begin{figure}[h]
\includegraphics[width=3in]{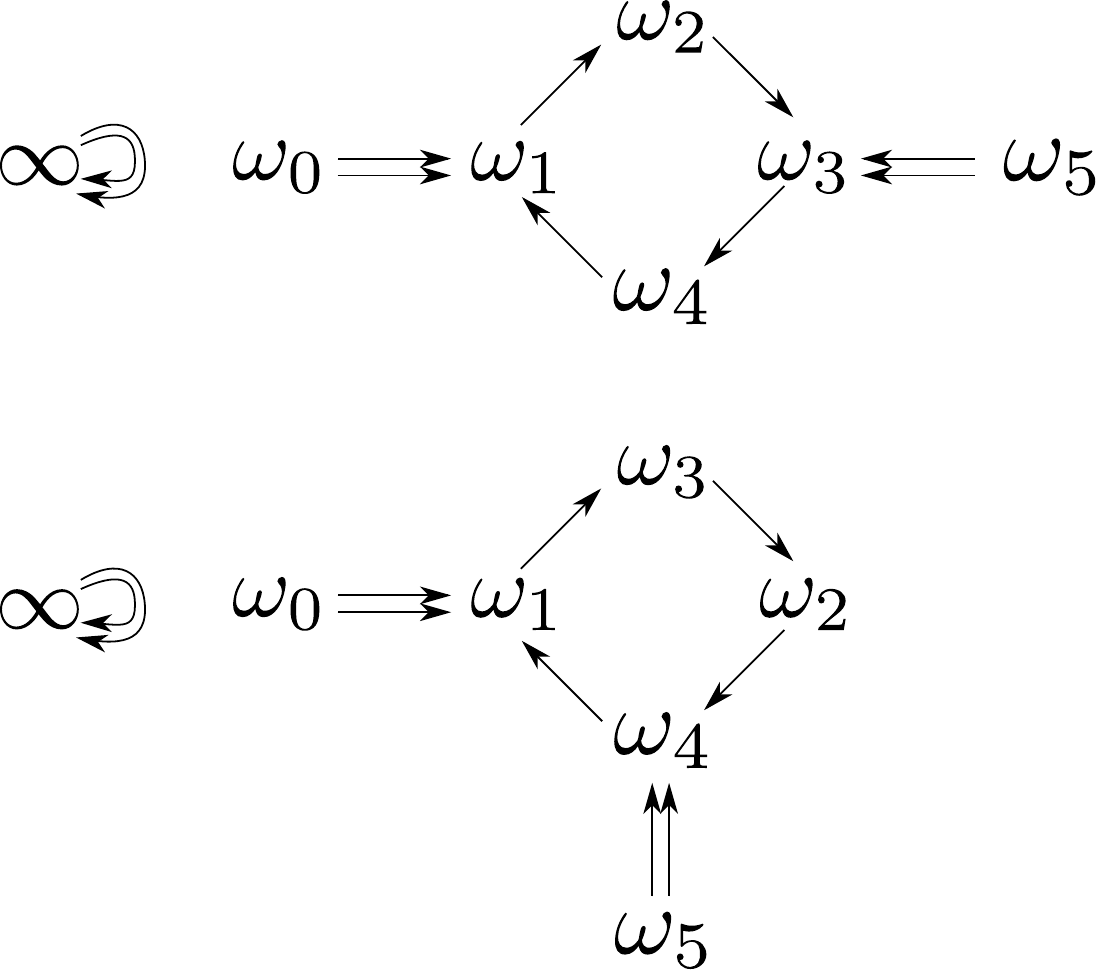}
\caption{Two mapping schemes outside the scope of our results}
\label{fig:unknown_schemes}
\end{figure}

These mapping schemes are identical in every sense that we have used to distinguish those realizable by obstructed polynomials from those not realizable. However, notice in Figure \ref{fig:unknown} that for the second scheme we may avoid the issues that arise with having all the critical values in the only period by considering a Levy cycle of length greater than 1.

\begin{figure}[h]
\includegraphics[width=5in]{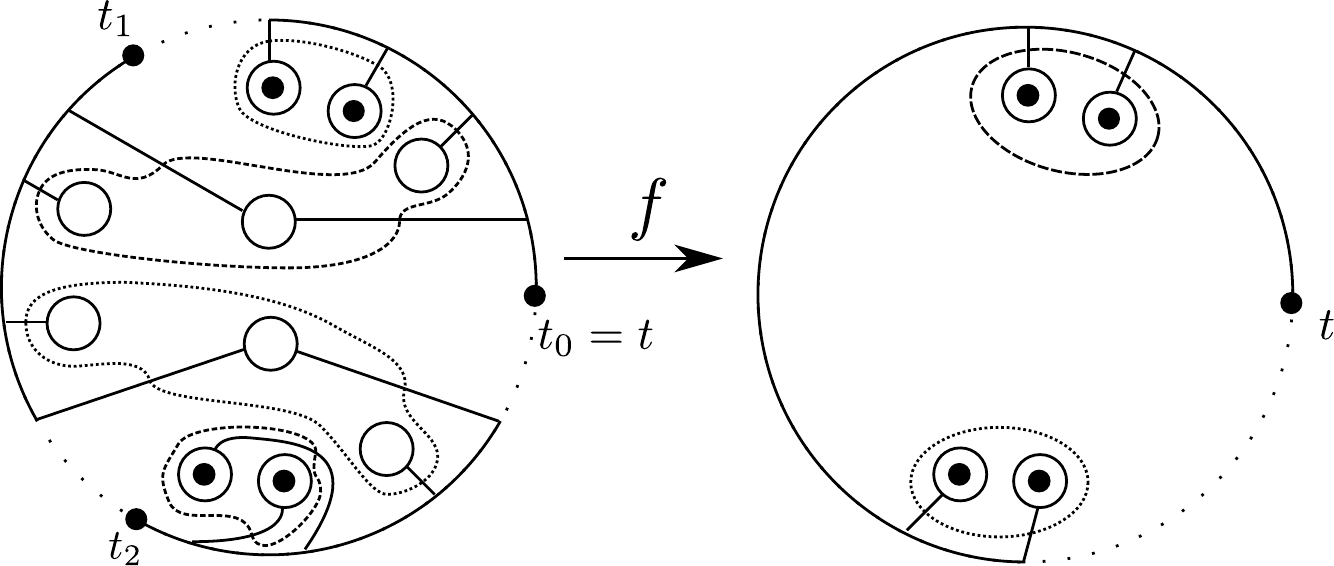}
\caption{An obstructed topological polynomial outside the scope of our results}
\label{fig:unknown}
\end{figure}

Somehow, the relative position of the critical values within this period affects the mapping scheme's realizability by obstructed topological polynomials. It seems that one could combine the ideas of Theorem \ref{thm:obstructions} with the treatment of critical values in Proposition \ref{thm:omega} to gain some ground within this class of mapping schemes.

\begin{open}

How does the relative position of critical values within a non-attractor period of a mapping scheme affect the scheme's realizability by obstructed topological polynomials?

\end{open}

However, it remains unclear how to prove the (probable) non-realizability of the remaining mapping schemes.

\begin{open}

Can we employ the $P\Sigma O_n$-bimodule theory of Nekrashevych to prove the non-realizability of mapping schemes by obstructed topological polynomials?

\end{open}

We have only produced stable Levy cycles for polynomials. Perhaps we could use the machinery of this paper to find non-stable Levy cycles for some of these unclassified schemes. Instead of showing that the bimodule $[\gamma]$ restricts to itself under $\M_g$, we would need to show that a particular product of powers of Dehn twists restricts to a different particular multitwist (these extra powers and twists coming from the other non-peripheral preimages of the Levy cycle). While the bimodules certainly still make these computations straightforward, one would need a systematic way to determine which multitwists to consider.

\begin{open}

Can we extend these results if we consider non-stable Levy cycles?

\end{open}

Also, while we have demonstrated that certain mapping schemes are realizable by obstructed topological polynomials, we have not determined if the examples produced constitute \emph{all} obstructed topological polynomials with these mapping schemes, or if there exist others with different forms. It would be interesting to know which is true.

\begin{open}

Do there exist obstructed topological polynomials realizing these mapping schemes other than those given in this paper?

\end{open}

Further, we have not attempted to determine when these various examples of obstructed topological polynomials are equivalent. A result like that for the quadratic polynomials with preperiod length 1 and period length 2 in \cite{BN06} would be interesting, even if only for preperiodic quadratic polynomials.

\begin{open}

What are the Thurston equivalence classes of these obstructed topological polynomials?

\end{open}

Also related to our results is Nekrashevych's conjecture that the bimodule $\G_f$ is sub-hyperbolic. \cite{Nek09} If this conjecture is true, then the various elements represented by $\gamma$ in our proofs have finite order in the faithful quotient of the action.

\begin{open}

What are the orders of the obstructing elements in the faithful quotient of $P\Sigma O_n$ by the self-similar action?

\end{open}

\end{document}